\renewcommand{\emph}[1]{\textbf{\textit{#1}}}
\newtheorem{thm}{Theorem}[section]
\newtheorem{prop}[thm]{Proposition}
\newtheorem{cor}[thm]{Corollary}
\newtheorem{lemma}[thm]{Lemma}
\theoremstyle{definition}
\newtheorem{example}[thm]{Example}
\newtheorem{df}[thm]{Definition}
\newtheorem{rmk}[thm]{Remark}
\newcommand{\CHaus}{\mathcal{C\hspace{-3px}p\hspace{-1.5px}c\hspace{-1.5px}t\hspace{-1.5px}H\hspace{-4px}a\hspace{-1.5px}u\hspace{-1.5px}s}}
\newcommand{\Prin}{\mathcal{P\hspace{-2pt}r\hspace{-2pt}i\hspace{-2pt}n\hspace{-2pt}}}
\newcommand{\Free}{\mathcal{F\hspace{-2pt}r\hspace{-2pt}e\hspace{-2pt}e\hspace{-2pt}}}
\newcommand{\Gal}{\mathcal{G\hspace{-3pt}a\hspace{-2pt}l\hspace{-3pt}}}
\numberwithin{equation}{section}
\begin{document}

\author[F.~D'Andrea]{Francesco D'Andrea} 
\address[F.~D'Andrea]{Universit\`a di Napoli Federico II, Complesso MSA, Via Cintia, 80126 Napoli, Italy.}
\email{francesco.dandrea@unina.it}
\author[T.~Maszczyk]{Tomasz Maszczyk}
\address[T. Maszczyk]{Instytut Matematyki, Uniwersytet Warszawski, ul.\ Banacha 2, 02-097 Warszawa, Poland}
\email{t.maszczyk@uw.edu.pl}

\title[Pullback of quantum principal bundles]{\vspace*{-1.5cm}Pullback of quantum principal bundles}
\date{September 2025}

\begin{abstract}
We introduce an abstract framework of Cartesian squares beyond the context of fiber products, and use it to extend the notion of pullback from classical to compact quantum principal bundles. Based only on our abstract notion of a Cartesian square, we extend key concepts of Equivariant Topology, such as the pullback of a family of group actions, orbit spaces, slices and  global sections, change of base and structure group, free actions, and the groupoid of compact principal bundles. Finally, we embed the thus extended Equivariant Topology inside the 2-category of Grothendieck categories in such a way that our notion of a Cartesian square becomes the appropriate Beck--Chevalley condition.
\end{abstract}

\maketitle

\section{Introduction}
\noindent
The pullback of a compact principal bundle is the most important and extensively investigated operation in Equivariant Topology. 
In Equivariant  Noncommutative Topology, a natural dual replacement for compact principal bundles, after applying the Peter--Weyl functor  \cite{BDCH17},  are Hopf--Galois extensions. If one tries to generalize the pullback construction to the context of compact quantum spaces, one faces the problem that in the category of (arbitrary) unital C*-algebras neither the balanced tensor product nor the categorical pushout --- the amalgamated free product --- work (see the discussion in Sec.~\ref{sec:32}).

The aim of this paper is to find an appropriate extension of the notion of pullback of compact principal bundles to compact quantum principal ones. We achieve this by introducing a framework of \emph{Cartesian squares} that extends beyond the context of fibre products.  As a bonus, we obtain a tool to extend  in principle the whole  Equivariant Topology to its noncommutative counterpart. In particular, we extend key notions such as the pullback of a family of group actions, orbit spaces, slices, global sections, changes of base and structure group, free actions and groupoids of compact principal bundles. We derive all of these notions and their noncommutative extensions from the notion of a Cartesian square alone.
However, let us stress an important difference between our notion of a system of Cartesian squares and the classical one. The latter system is obtained by the constructive completion of pairs of cofinal arrows to squares, by the universal property of the fiber product. Instead, we prove that our system satisfies axioms which are abstracted out of the properties of the latter (cf.~the corollaries \ref{cor:spaces} and \ref{cor:algebras}).

In this paper we will study noncommutative spaces from three different points of view.
Motivated by the Gelfand--Naimark equivalence of categories:
\begin{center}
\big\{compact Hausdorff spaces\big\} $\simeq$ \big\{commutative unital C*-algebras\big\}$^{\mathrm{op}}$ ,
\end{center}
we will call the opposite category of (not necessarily commutative) unital C*-algebras the category of \emph{compact quantum spaces}. 
Motivated by the Grothendieck--Chevalley--Serre--Zariski equivalence of categories:
\begin{center}
\big\{algebraic affine schemes over a field\big\} $\simeq$ \big\{commutative unital associative algebras\big\}$^{\mathrm{op}}$ ,
\end{center}
we will call the opposite category of (not necessarily commutative) unital associative algebras the category of \emph{quantum affine schemes}.

\begin{figure}[t]
\begin{tikzpicture}[semithick,font=\sffamily]

\draw (7,4.5) ellipse (7cm and 4cm);
\draw (5,5) ellipse (4cm and 2.5cm);
\draw (9,5) ellipse (4cm and 2.5cm);

\node at (3.1,5.0) {\large\begin{tabular}{c}Equivariant \\ Algebraic \\ Geometry\end{tabular}};
\node at (10.9,5.0) {\large\begin{tabular}{c}Equivariant  \\ Noncommutative \\ Topology\end{tabular}};
\node at (7,5.0) {\large\begin{tabular}{c}Hopf comodule \\ algebras\end{tabular}};
\node at (7,1.5) {\large Grothendieck categories};

\end{tikzpicture}
\caption{}\label{fig:1}
\end{figure}

Finally, a more general notion of noncommutative space can be given in the language of 2-categories. The idea of passing from the context of spaces and schemes to the context of abelian categories goes back to Grothendieck and Gabriel. Following the tradition in \cite{Gro57,Gab62,VdB01,Ros20}, we will use the term \emph{noncommutative space} to refer to a Grothendieck category.
A reference is e.g.\ Chap.\ VI in \cite{Gab62}.

For a compact topological group or an affine group scheme $G$, the relevant Grothendieck category is the category \mbox{$\mathcal{M}^{\mathcal{O}(G)}$} of comodules over a suitable commutative Hopf algebra $\mathcal{O}(G)$.
For a compact group $\mathcal{O}(G)$ is the Peter--Weyl subalgebra of $C(G)$,
and for a group scheme is the coordinate algebra of $G$.
A reconstruction theorem in this framework is provided by Tannaka--Krein duality, that reconstructs $G$ from the subcategory of finite-dimensional $\mathcal{O}(G)$-comodules.

For a compact Hausdorff space $X$, the relevant Grothendieck category is the category $\mathcal{M}_{C(X)}$ of modules over $C(X)$. The importance of this point of view in topology is justified by the fact that the topological K-theory of $X$ depends only on the subcategory of compact objects (i.e.\ finitely generated and projective) in $\mathcal{M}_{C(X)}$, which makes this construction automatically Morita-invariant.

For a scheme $X$, the relevant Grothendieck category is the category
\mbox{$\mathcal{QC\hspace{-3pt}o\hspace{-2.5pt}h\!}_X$}
of quasi-coherent
sheaves over $X$. The importance of this point of view is justified by the fact that $X$ can be reconstructed from this Grothendieck category (this is the Gabriel--Rosenberg reconstruction theorem \cite{Ros20}). This approach was very fruitful, leading to theorems by Balmer, Garkusha,
Bondal--Orlov and others about the reconstruction of a scheme $X$ from \mbox{$\mathcal{QC\hspace{-3pt}o\hspace{-2.5pt}h\!}_X$}, from one of its subcategories or their triangulated derived categories. Again, many important constructions like K-theory, Hochschild and cyclic (co)homology make sense in this generality and hence they are automatically Morita-invariant.

Equivariant Topology/Algebraic Geometry can be transported to the context of Grothen-dieck categories as well. Any compact space/affine scheme with an action of a compact group/affine group scheme leads to a Grothendieck category of relative Hopf modules/equivariant quasi-coherent sheaves.

Finally, the fact that the notion of Grothendieck category is independent of any commutativity assumption allows one to extend many aspects of Equivariant Topology/Algebraic Geometry to the noncommutative setting. 
In this paper, the passage from the framework of spaces and schemes to the one of Grothendieck categories is mediated by the framework of noncommutative (Hopf-)algebras and (relative Hopf-)modules (see Figure \ref{fig:1}),
to make it accessible to a wider audience familiar with the algebraic setting of Noncommutative Topology.
Although we work out in detail our framework
in the algebraic setting, we also show that many arguments are both conceptually and computationally simpler in the 2-categorical setting of Grothendieck categories. In particular, the notion of a Cartesian square boils down to the Beck--Chevalley property.

\smallskip

The paper is structured as follows.

In Section \ref{sec:catpre} we introduce an abstract notion of a category with Cartesian squares, inspired by the properties of pullback diagrams in categories with fiber products, and we recall the Beck--Chevalley conditions.

In Section \ref{sec:31} we develop our formalism of Cartesian squares in the category of compact group actions on compact spaces. Here and further on, by compact we mean always 
compact Hausdorff. In this context, we define Cartesian squares by invertibility of a certain generalized canonical map and prove that they form a system of Cartesian squares in the sense of Section \ref{sec:catpre}. We also prove that all morphisms of free actions are Cartesian squares.

In Section \ref{sec:32} we focus on the opposite category of Hopf-comodule algebras.
We define again Cartesian squares by invertibility of a certain generalized canonical map,
and prove that they form a system of Cartesian squares in the sense of Section \ref{sec:catpre}. We also prove that all morphisms of Hopf--Galois extensions are Cartesian squares.

In Section \ref{sec:3.3} we show that the Cartesian condition for Hopf-comodule algebras becomes the Beck--Chevalley condition in the 2-category of relative Hopf-modules.

\section{Categorical preliminaries}\label{sec:catpre}
In this section we introduce and recall some relevant notions of category theory.

\subsection{Cartesian squares}\label{sec:21}
Let $\mathcal{C}$ be a category with distinguihed a class of \emph{basic} objects and a distinguished class of commutative squares
\begin{equation}\label{eq:catcartsquare}
\begin{tikzpicture}[scale=1.1,baseline={([yshift=-10pt]current bounding box.center)}]

\node (E) at (0,3) {$U'$};
\node (F) at (2.4,3) {$U$};
\node (X) at (0,1) {$X'$};
\node (Y) at (2.4,1) {$X$};

\path[-To,font=\footnotesize]
		(X) edge node[above] {$f$} (Y)
		(E) edge node[left] {$q'$} (X)
		(F) edge node[right] {$q$} (Y)
		(E) edge node[above] {$\widetilde{f}$} (F);

\node[right=10pt] at (Y) {\rule{0pt}{8pt}.};

\end{tikzpicture}
\end{equation}
called \emph{Cartesian squares}, two distinguished classes of Cartesian squares, called the \emph{left class} and the \emph{right class}, and two distinguished subclasses, called the \emph{fiber change} and the \emph{base change}, of the left class and the right class, respectively, satisfying the following axioms:
\begingroup
\leftmargini=2em
\begin{itemize}\itemsep=2pt
\item[] (\emph{Vertical})
Vertical arrows go to basic objects.

\item[] (\emph{Horizontal})
Every square whose horizontal arrows are isomorphisms is Cartesian, and is both a fiber change and a base change. 

\item[] (\emph{Pasting})
Each one of the four classes (left, right, fiber and base change) is closed under horizontal pasting.

\item[] (\emph{2-out-of-3})
Consider the commutative diagram:

\begin{equation}\label{2-out-of-3}
\begin{tikzpicture}[baseline=(current bounding box.center),scale=2.7]

\node (App) at (0,1) {$U''$};
\node (Ap) at (1,0.4) {$U'$};
\node (A) at (2,1) {$U$};
\node (Bpp) at (0,0) {$X''$};
\node (Bp) at (1,-0.6) {$X'$};
\node (B) at (2,0) {$X$};

\path[To-,font=\footnotesize,inner sep=2pt]
		(A) edge node[below right] {$\widetilde{f}$} (Ap)
		(Ap) edge node[below left] {$\widetilde{f}'$} (App)
		(A) edge node[below] {$\widetilde{f}''$} (App)
		(B) edge (Bp) 
		(Bp) edge node[below left] {$f'$} (Bpp)
		(B) edge node[pos=0.51,white] {\rule{10pt}{10pt}} (Bpp)
		(B) edge node[right] {$q$} (A)
		(Bp) edge node[fill=white, pos=0.4] {$\rule[-2pt]{0pt}{0pt}\;q'$} (Ap)
          (B) edge node[below right] {$f$} (Bp)
		(Bpp) edge node[left] {$q''$} (App);
\end{tikzpicture}
\end{equation}
Then:
\begin{enumerate}[leftmargin=2em,label=(\arabic*),itemsep=5pt]
\item
If the two morphisms in front are Cartesian, then the morphism in the back (their horizontal composition) is Cartesian as well.
\item
If the right front face is in the right class and the back face is Cartesian, 
then the left front face is Cartesian as well.
\item
If the left front face is in left class and the back face is Cartesian, 
then the right front face is Cartesian as well.
\end{enumerate}
\item[] (\emph{Decomposition})
Every Cartesian square decomposes uniquely up to a unique isomorphism as in \eqref{2-out-of-3} into a composition of a fiber change and a base change.
\end{itemize}
\endgroup

\smallskip

For pullbacks in categories with fiber products the 2-out-of-3 property was investigated in \cite{Prz13}.

\smallskip
\subsection{Weakly Cartesian squares of Grothendieck categories}
Let us recall the notion of the 2-category $\mathfrak{G}$  of Grothendieck categories. By definition, in this 2-category $\mathfrak{G}$:
\leftmargini=2em
\begin{itemize}\itemsep=2pt
\item
0-cells $\mathcal{X}$ are Grothendieck categories.

\item
1-cells $\mathcal{f}:\mathcal{X}'\to \mathcal{X}$ are adjunctions $\mathcal{f}=(f^* \dashv f_*)$, where $f_*:\mathcal{X}'\to \mathcal{X}$ is an additive functor.

\item
2-cells $\mathcal{f}\Longrightarrow \mathcal{g}$ are natural transformations $f_*\Longrightarrow g_*$, or equivalently $f^*\Longleftarrow g^*$.
\end{itemize}

Let $\mathfrak{S}$ be a sub-2-category of $\mathfrak{G}$.
A \emph{weakly commutative square} in $\mathfrak{S}$ is a diagram in $\mathfrak{S}$:
\begin{equation}\label{eq:wCs}
\begin{tikzpicture}[baseline={([yshift=-5pt]current bounding box.center)}]

\node (E) at (0,2.2) {$\mathcal{U}'$};
\node (F) at (2.3,2.2) {$\mathcal{U}$};
\node (X) at (0,0) {$\mathcal{X}'$};
\node (Y) at (2.3,0) {$\mathcal{X}$};

\path[-To,font=\footnotesize]
		(X) edge node[below] {$\mathcal{f}$} (Y)
		(E) edge node[left] {$\mathcal{q}'$} (X)
		(F) edge node[right] {$\mathcal{q}$} (Y)
		(E) edge node[above] {$\mathcal{\widetilde{f}}$} (F);

\draw[-implies,double equal sign distance, shorten >=18pt, shorten <=18pt] (X) -- (F);
\end{tikzpicture}
\end{equation}
such that the 2-cell $ \mathcal{f}\mathcal{q}'\Longrightarrow\mathcal{q}\mathcal{\widetilde{f}}$ is invertible. The latter means that a natural isomorphism of functors $f_*q'_*   \Longrightarrow q_*\widetilde{f}_*$,
or equivalently $q'^*f^*\Longleftarrow \widetilde{f}^*q^*$, is chosen. They can be depicted as follows:
\begin{equation}\label{eq:2mates}
\begin{tikzpicture}[baseline={([yshift=-5pt]current bounding box.center)}]

\node (E) at (0,2.2) {$\mathcal{U}'$};
\node (F) at (2.3,2.2) {$\mathcal{U}$};
\node (X) at (0,0) {$\mathcal{X}'$};
\node (Y) at (2.3,0) {$\mathcal{X}$};

\path[-To,font=\footnotesize]
		(X) edge node[below] {$f_*$} (Y)
		(E) edge node[left] {$q'_*$} (X)
		(F) edge node[right] {$q_*$} (Y)
		(E) edge node[above] {$\widetilde{f}_*$} (F);

\draw[-implies,double equal sign distance, shorten >=18pt, shorten <=18pt] (X) -- (F);
\end{tikzpicture}
\hspace{2cm}
\begin{tikzpicture}[baseline={([yshift=-5pt]current bounding box.center)}]

\node (E) at (0,2.2) {$\mathcal{U}'$};
\node (F) at (2.3,2.2) {$\mathcal{U}$};
\node (X) at (0,0) {$\mathcal{X}'$};
\node (Y) at (2.3,0) {$\mathcal{X}$};

\path[To-,font=\footnotesize]
		(X) edge node[below] {$f^*$} (Y)
		(E) edge node[left] {$q'^*$} (X)
		(F) edge node[right] {$q^*$} (Y)
		(E) edge node[above] {$\widetilde{f}^*$} (F);

\draw[-implies,double equal sign distance, shorten >=18pt, shorten <=18pt] (F) -- (X);
\end{tikzpicture}
\end{equation}

A weakly commutative square \eqref{eq:wCs} is said to satisfy the
\emph{Beck--Chevalley condition} if the mate of the 2-cell on the left of \eqref{eq:2mates}
\begin{equation}
\begin{tikzpicture}[baseline={([yshift=-5pt]current bounding box.center)}]

\node (E) at (0,2.2) {$\mathcal{U}'$};
\node (F) at (2.3,2.2) {$\mathcal{U}$};
\node (X) at (0,0) {$\mathcal{X}'$};
\node (Y) at (2.3,0) {$\mathcal{X}$};

\path[-To,font=\footnotesize]
		(X) edge node[below] {$f_*$} (Y)
		(X) edge node[left] {$q'^*$} (E)
		(Y) edge node[right] {$q^*$} (F)
		(E) edge node[above] {$\widetilde{f}_*$} (F);

\draw[-implies,double equal sign distance, shorten >=18pt, shorten <=18pt] (Y) -- (E);

\end{tikzpicture}
\end{equation}
is an isomorphism. The latter means that the canonical natural transformation of functors $q^* f_*\Longrightarrow \widetilde{f}_*q'^*$ is an isomorphism.

Similarly, a weakly commutative square \eqref{eq:wCs} is said to satisfy the
\emph{dual Beck--Chevalley condition}  if the mate of the 2-cell on the right of \eqref{eq:2mates}
\begin{equation}
\begin{tikzpicture}[baseline={([yshift=-5pt]current bounding box.center)}]

\node (E) at (0,2.2) {$\mathcal{U}'$};
\node (F) at (2.3,2.2) {$\mathcal{U}$};
\node (X) at (0,0) {$\mathcal{X}'$};
\node (Y) at (2.3,0) {$\mathcal{X}$};

\path[-To,font=\footnotesize]
		(Y) edge node[below] {$f^*$} (X)
		(E) edge node[left] {$q'_*$} (X)
		(F) edge node[right] {$q_*$} (Y)
		(F) edge node[above] {$\widetilde{f}^*$} (E);

\draw[-implies,double equal sign distance, shorten >=18pt, shorten <=18pt] (Y) -- (E);

\end{tikzpicture}
\end{equation}
is an isomorphism. The latter means that the canonical natural transformation of functors $q'_*\widetilde{f}^*\Longleftarrow f^* q_*$ is an isomorphism.

For the complicated history of the Beck--Chevalley condition, one can see \cite[Pag.\ 307]{Pav91}.

\begin{example}\label{example:twosquares}
To a commutative square of unital associative algebras
we can associate a weakly commutative square of Grothendieck categories (of right modules).
This is a special case of the construction in Section \ref{sec:3.3}, where the reader can find all the details.
When the algebras are commutative, both Beck--Chevalley conditions for the categories of modules are equivalent to the pushout condition in the category of commutative algebras:
\begin{center}

\smallskip

\tabcolsep=1cm
\begin{tabular}{cc}
\begin{tikzpicture}
\node (Ap) at (0,1.5) {$A'$};
\node (A) at (2,1.5) {$A$};
\node (Bp) at (0,0) {$B'$};
\node (B) at (2,0) {$B$};

\path[-To,font=\footnotesize]
		(B) edge (Bp)
		(B) edge (A)
		(A) edge (Ap)
		(Bp) edge (Ap);

\end{tikzpicture}
&
\begin{tikzpicture}

\node (Ap) at (0,1.5) {$\mathcal{M}_{A'}$};
\node (A) at (2,1.5) {$\mathcal{M}_{A}$};
\node (Bp) at (0,0) {$\mathcal{M}_{B'}$};
\node (B) at (2,0) {$\mathcal{M}_{B}$};

\path[-To,font=\footnotesize]
		(Bp) edge (B)
		(A) edge (B)
		(Ap) edge (Bp)
		(Ap) edge (A);

\end{tikzpicture}
\\
(pushout) & (Beck--Chevalley)
\end{tabular}

\smallskip

\end{center}
Both diagrams can be regarded as equivalent descriptions of the pullback of affine schemes
\begin{center}
\begin{tikzpicture}[xscale=1.5,yscale=1.3]
\node (Ap) at (0,1.5) {$\mathrm{Spec}(A')$};
\node (A) at (2,1.5) {$\mathrm{Spec}(A)$};
\node (Bp) at (0,0) {$\mathrm{Spec}(B')$};
\node (B) at (2,0) {$\mathrm{Spec}(B)$};

\path[To-]
		(B) edge (Bp)
		(B) edge (A)
		(A) edge (Ap)
		(Bp) edge (Ap);

\end{tikzpicture}
\end{center}
Therefore, morally speaking, the pair of Beck--Chevalley conditions generalizes the notion of pullback to noncommutative spaces.
\end{example}

\section{The context of equivariant compact Hausdorff spaces}\label{sec:31}
\noindent
Let us start with a classical motivation.
In the following, we tacitly assume that every space is a compact Hausdorff topological space,
every group is a compact Hausdorff topological group, and all maps are continuous.
The category in which we work for the time being is the category of \emph{equivariant spaces} which are right group actions $U\times G \rightarrow U$, $(u, g)\mapsto u g$, of some group $G$ on some space $U$. Morphisms in this category are pairs consisting of a (continuous) group homomorphism $\gamma: G'\rightarrow G$ and a (continuous) map $\widetilde{f}: U'\rightarrow U$ satisfying $\widetilde{f}(u'g')=\widetilde{f}(u')\gamma(g')$.   By a \emph{basic} equivariant space we mean a space with the action of the trivial group. Note that morphisms between basic objects are simply continuous maps between compact Hausdorff spaces.
In what follows, in all our squares the trivial group acting on basic objects at the bottom is omitted but understood.

By a \emph{family of $G$-actions} we mean a morphism from a $G$-action on $U$ to a trivial group action on $X$, which we identify with a pair $(G,U\to X)$ consisting of a group $G$ and a $G$-equivariant map $U\to X$ from a $G$-space $U$ to a space $X$ with trivial $G$-action.
Let us consider the category whose objects are families of group actions and whose morphisms
\begin{equation}\label{eq:morpp}
(G',U'\to X')\longrightarrow (G,U\to X)
\end{equation}
are pairs consisting of a morphism $\gamma:G'\to G$ of groups and a commutative square
\begin{equation}\label{eq:pullbundle}
\begin{tikzpicture}[scale=1.1,baseline=(current bounding box.center)]

\node (E) at (0,3) {$U'$};
\node (F) at (2.4,3) {$U$};
\node (X) at (0,1.5) {$X'$};
\node (Y) at (2.4,1.5) {$X$};

\path[-To,font=\footnotesize]
		(X) edge node[above] {$f$} (Y)
		(E) edge node[left] {$q'$} (X)
		(F) edge node[right] {$q$} (Y)
		(E) edge node[above] {$\widetilde{f}$} (F);
\end{tikzpicture}
\end{equation}
where $\widetilde{f}$ is $G'$-equivariant. Such a square can be regarded equivalently as a commutative square in the category of equivariant spaces whose vertical arrows go to the basic objects.

Notice that the diagram \eqref{eq:pullbundle} decomposes now into commutative triangles as follows
\begin{equation}\label{eq:diagdecompo}
\begin{tikzpicture}[xscale=2.5,yscale=1.5,baseline=(current bounding box.center)]

\node (E) at (0,3) {$U'$};
\node (F) at (3,3) {$U$};
\node (fF) at (2,1) {$X'\times_X U$};
\node (X) at (0,0) {$X'$};
\node (Y) at (3,0) {$X$};
\node (V) at (1,2) {$U'\times^{G'}G$};

\path[-To,font=\footnotesize]
		(E) edge (V)
		(V) edge (X)
		(E) edge node[above] {$\widetilde{f}$} (F)
		(V) edge node[above right,inner sep=1pt] {$k$} (fF)
		(fF) edge (F)
		(E) edge node[left] {$q'$} (X)
		(F) edge node[right] {$q$} (Y)
		(X) edge node[below] {$f$} (Y)
		(fF) edge (X)
		(fF) edge (Y)
		(V) edge (F);
\end{tikzpicture}
\end{equation}
where the unlabelled maps are given by
\begin{align*}
& U' \to U' \times^{G'}G , \quad u'\mapsto [(u', e)] , 
&& X'\times_X U \to U , \quad (x',u)\mapsto u ,
\\
& U'\times^{G'}G \to U , \quad [(u',g)] \mapsto \widetilde{f}(u')g
&& X'\times_X U\to X' , \quad (x',u)\mapsto x' ,
\\
& U'\times^{G'}G \to X' , \quad [(u',g)] \mapsto q'(u')
&& X'\times_X U \to X , \quad (x',u)\mapsto f(x') .
\end{align*}
and $k$ is the map
\begin{equation}\label{eq:Gbmap}
U'\times^{G'}G \longrightarrow
X'\times_X U \;,\qquad
[(u',g)]\mapsto 
\bigl(q'(u'),\widetilde{f}(u')g \bigr)
\;.
\end{equation}
We will call $k$ the \emph{generalized canonical map} associated to the square in \eqref{eq:pullbundle}.

\begin{df}\label{def:cartesiandiag}
A square being a morphism of families of group actions \eqref{eq:morpp} is called 
\emph{Cartesian} if the map \eqref{eq:Gbmap} is an isomorphism.
\end{df}

\begin{rmk}
The decomposition \eqref{eq:diagdecompo} generalizes that in the Equivalence Theorem 10.3 of \cite{Ste99}, which concerns fibre bundles.
\end{rmk}

Similarly to the pasting theorem for pullbacks in the category of topological spaces, we have the theorem below for horizontal pasting of our Cartesian squares.
Let us denote by ${}_{X}\CHaus^{G}$ the subcategory of families of group actions where the group $G$ and the base $X$ are fixed, whose morphisms are as in \eqref{eq:pullbundle} with $\gamma$ and $f$ being identities.

\begin{lemma}\label{lemma:conservative} \
\begin{enumerate}
\item\label{lemma:conservativeA}
If the map $X''\to X'$ is surjective, then the functor
\[
X''\times_{X'}(-): {}_{X'}\CHaus^{G} \to {}_{X''}\CHaus^{G}
\]
is conservative.
\item\label{lemma:conservativeB}
If the map $G'\to G$ is injective, then the functor
\[
(-)\times^{G'}G: {}_{X''}\CHaus^{G'} \to {}_{X''}\CHaus^{G}
\]
is conservative.
\end{enumerate}
\end{lemma}

\begin{proof}
We use the following facts. First, faithful functors reflect monos and epis and therefore they are conservative if the domain is a balanced category, and domains of both our functors are balanced. Second, in our categories mono means injective and epi means surjective. Now we prove faithfulness of the two functors.

\medskip

\noindent
\eqref{lemma:conservativeA}
A right adjoint functor is faithful if and only if the counit of the adjunction is componentwise epi. Since $X''\times_{X'}(-)$ is a right adjoint to the base-forgetful functor 
${}_{X''}\CHaus^{G}\to {}_{X'}\CHaus^{G}$, componentwise the counit of the adjunction reads as the following morphism in ${}_{X'}\CHaus^{G}$:
\[
X''\times_{X'}Y\to Y , \qquad
(x'',y)\mapsto y .
\]
This is surjective if $X''\to X'$ is surjective.

\medskip

\noindent
\eqref{lemma:conservativeB}
A left adjoint functor is faithful if and only if the unit of the adjunction is componentwise mono. Since $\mathrm{Ind}^G_{G'}=(-)\times^{G'}G$ is a left adjoint to the group-coforgetful functor $\mathrm{Res}_{G'}^G$, componentwise the unit of the adjunction reads as the following morphism in ${}_{X''}\CHaus^{G'}$:
\[
Y\to Y\times^{G'}G , \qquad
y\mapsto [(y,e)] .
\]
This is injective if $G'\to G$ is injective.
\end{proof}

\begin{thm}[2-out-of-3]\label{thm:pasting}
Consider the morphisms of families of group actions corresponding to the three vertical faces of the following commutative diagram:\smallskip
\begin{equation}\label{eq:3Dpicture}
\begin{tikzpicture}[baseline=(current bounding box.center),scale=2.7]

\node (Hpp) at (0,1.9) {$G''$};
\node (Hp) at (1,1.3) {$G'$};
\node (H) at (2,1.9) {$G$};
\node (App) at (0,1) {$U''$};
\node (Ap) at (1,0.4) {$U'$};
\node (A) at (2,1) {$U$};
\node (Bpp) at (0,0) {$X''$};
\node (Bp) at (1,-0.6) {$X'$};
\node (B) at (2,0) {$X$};

\path[To-,font=\footnotesize,inner sep=2pt]
		(H) edge node[below right] {$\gamma$} (Hp)
		(Hp) edge (Hpp) 
		(H) edge (Hpp) 
		(A) edge node[below right] {$\widetilde{f}$} (Ap)
		(Ap) edge node[below left] {$\widetilde{f}'$} (App)
		(A) edge node[below] {$\widetilde{f}''$} (App)
		(B) edge (Bp) 
		(Bp) edge node[below left] {$f'$} (Bpp)
		(B) edge node[pos=0.51,white] {\rule{10pt}{10pt}} (Bpp) 
		(B) edge node[right] {$q$} (A)
		(Bp) edge node[fill=white, pos=0.4] {$\rule[-2pt]{0pt}{0pt}\;q'$} (Ap)
         (B) edge node[below right] {$f$} (Bp)
		(Bpp) edge node[left] {$q''$} (App);

\end{tikzpicture}
\end{equation}
Then:
\begin{enumerate}[leftmargin=2em,label=(\arabic*),itemsep=5pt]
\item\label{thmA}
If the two morphisms in front are Cartesian, then the morphism in the back (their composition) is Cartesian as well.
\item\label{thmB}
If the right front face and the back face are Cartesian, and the map $\gamma:G'\to G$ is injective,
then the left front face is Cartesian as well.
\item\label{thmC}
If the left front face and the back face are Cartesian, and the map $f':X''\to X'$ is surjective,
then the right front face is Cartesian as well.
\end{enumerate}
\end{thm}

\begin{proof}
The canonical maps of the three faces are
\begin{align*}
k: & \;U'\times^{G'}G \longrightarrow X'\times_X U \;, & [(u',g)] &\mapsto \bigl(q'(u'),\widetilde{f}(u')g \bigr) , \\
k': & \;U''\times^{G''}G' \longrightarrow X''\times_{X'}U' \;, & [(u'',g')] &\mapsto \bigl(q''(u''),\widetilde{f}'(u'')g' \bigr) , \\
k'': & \;U''\times^{G''}G \longrightarrow X''\times_X U \;, & [(u'',g)] &\mapsto \bigl(q''(u''),\widetilde{f}''(u'')g \bigr) .
\end{align*}
For starters, we check that the following diagram is commutative
\begin{equation}\label{eq:hencecomm}
\begin{tikzpicture}[baseline=(current bounding box.center),xscale=5,yscale=3]

\node (a) at (1,1) {$X''\times_XU$};
\node (b) at (0,1) {$U''\times^{G''}G$};
\node (c) at (0,0) {$(U''\times^{G''}G')\times^{G'}G$};
\node (d) at (1,0) {$(X''\times_{X'}U')\times^{G'}G$};
\node (e) at (2,0) {$X''\times_{X'}(U'\times^{G'}G)$};
\node (f) at (2,1) {$X''\times_{X'}(X'\times_XU)$};

\path[To-,font=\footnotesize]
		(a) edge node[above] {$k''$} (b)
		(b) edge node[left] {$\cong$} (c)
		(d) edge node[above] {$k'\times^{G'}G$} (c)
		(e) edge node[above] {$\cong$} (d)
		(f) edge node[above] {$\cong$} (a)
		(f) edge node[right] {$X''\times_{X'}k$} (e);

\end{tikzpicture}
\end{equation}
To simplify the computations, we use the explicit inverse of the left vertical isomorphism
and start from the top left vertex.
Since all maps are $G$-equivariant, it is enough to do the check on elements
of the form $[(u'',e)]$ with $u''\in U''$. We have
\begin{center}
\begin{tikzpicture}[baseline=(current bounding box.center),xscale=5,yscale=3]

\node (a) at (1,1) {$\bigl(q''(u''),\widetilde{f}''(u'') \bigr)$};
\node (b) at (0,1) {$[(u'',e)]$};
\node (c) at (0,0) {$[(u'',e',e)]$};
\node (d) at (1,0) {$\big[\big(\bigl(q''(u''),\widetilde{f}'(u'')\bigr),e\big)\big]$};
\node (e) at (2,0) {$\bigl(q''(u''),[(\widetilde{f}'(u''),e)]\big)$};
\node[rectangle,draw] (f) at (2,1) {$\bigl(q''(u''),f'q''(u''),\widetilde{f}''(u'')\big)$};
\node[rectangle,draw] (fb) at (2,0.7) {$\bigl(q''(u''),q'\widetilde{f}'(u''),\widetilde{f}\,\widetilde{f}'(u'')\big)$};

\path[To-|,font=\footnotesize]
		(a) edge node[above] {$k''$} (b)
		(c) edge node[left] {$\cong$} (b)
		(d) edge node[above] {$k'\times^{G'}G$} (c)
		(e) edge node[above] {$\cong$} (d)
		(f) edge[shorten <=2pt] node[above] {$\cong$} (a)
		(fb) edge[shorten <=2pt] node[right] {$X''\times_{X'}k$} (e);

\end{tikzpicture}
\end{center}
From commutativity of the diagram \eqref{eq:3Dpicture} it follows that the two framed elements are equal, hence \eqref{eq:hencecomm} is commutative.
It follows that the three maps $k''$, $k'\times^{G'}G$
and $X''\times_{X'}k$
satisfy the following 2-out-of-3 property: if two are isomorphisms, the remaining one is an isomorphism as well.
Since $(-)\times^{G'}G$ and $X''\times_{X'}(-)$ are functors, we get \ref{thmA}.
The other two points follow from Lemma \ref{lemma:conservative}.
If $\gamma$ is injective, $(-)\times^{G'}G$ reflects isomorphisms, and we get \ref{thmB}.
If $f'$ is surjective, $X''\times_{X'}(-)$ reflects isomorphisms, and we get \ref{thmC}.
\end{proof}

\begin{df}\label{df:412b}
Consider the following two commutative squares:
\begin{center}
\begin{tikzpicture}[baseline={([yshift=-4pt]current bounding box.center)},scale=1.5]

\node (Hp) at (0,2.3) {$G'$};
\node (H) at (2,2.3) {$G$};
\node (Ap) at (0,1.5) {$U'$};
\node (A) at (2,1.5) {$U'\smash{\times^{G'}}G$};
\node (Bp) at (0,0) {$X'$};
\node (B) at (2,0) {$X'$};

\path[To-,font=\footnotesize]
		(B) edge node[below] {$=$} (Bp)
		(B) edge node[right] {$q'\smash{\times^{G'}}G$} (A)
		(Bp) edge node[left] {$q'$} (Ap)
		(A) edge node[above] {$U'\smash{\times^{G'}}e$} (Ap)
		(H) edge node[above] {$\gamma$} (Hp);
\end{tikzpicture}
\hspace{1.5cm}
\begin{tikzpicture}[baseline=(current bounding box.center),scale=1.5]

\node (Hp) at (0,2.3) {$G$};
\node (H) at (2,2.3) {$G$};
\node (Ap) at (0,1.5) {$X'\times_{\smash{X}}U$};
\node (A) at (2,1.5) {$U$};
\node (Bp) at (0,0) {$X'$};
\node (B) at (2,0) {$X$};

\path[To-,font=\footnotesize]
		(B) edge node[below] {$f$} (Bp)
		(B) edge node[right] {$q$} (A)
		(Bp) edge node[left] {$\mathrm{pr}_2$} (Ap)
		(A) edge node[above] {$\mathrm{pr}_1$} (Ap)
		(H) edge node[above] {$=$} (Hp);
\end{tikzpicture}
\end{center}
where $e:\{*\}\to G$ is the neutral element map and $\mathrm{pr}_1$ and $\mathrm{pr}_2$ are the canonical projections in the fiber product.
Both square are Cartesian (up to natural identifications, in both cases the generalized canonical map is the identity). The square on the left is called a \emph{fiber change}, and the one on the right a \emph{base change}.
\end{df}

\begin{prop}\label{prop:413b}
If the square \eqref{eq:pullbundle} is Cartesian, then it decomposes into a fiber change and the base change as follows:
\begin{center}
\begin{tikzpicture}[baseline=(current bounding box.center),scale=3]

\node (Hpp) at (0,1.9) {$G'$};
\node (Hp) at (1,1.3) {$G$};
\node (H) at (2,1.9) {$G$};
\node (App) at (0,1) {$U'$};
\node (Ap) at (1,0.4) {$U'\times^{G'}\!G\cong X'\times_{X}U$};
\node (A) at (2,1) {$U$};
\node (Bpp) at (0,0) {$X'$};
\node (Bp) at (1,-0.6) {$X'$};
\node (B) at (2,0) {$X$};

\path[To-,font=\footnotesize,inner sep=2pt]
		(H) edge node[below right] {$=$} (Hp)
		(Hp) edge node[below left] {$\gamma$} (Hpp)
		(H) edge node[below] {$\gamma$} (Hpp)
		(A) edge (Ap)
		(Ap) edge (App)
		(A) edge node[below] {$\widetilde{f}$} (App)
		(B) edge node[pos=0.51,white] {\rule{10pt}{10pt}} (Bpp) 
		(B) edge node[below right] {$f$} (Bp)
		(Bp) edge node[below left] {$=$} (Bpp)
		(B) edge node[right] {$q$} (A)
		(Bp) edge (Ap)
		(Bpp) edge node[left] {$q'$} (App);

\end{tikzpicture}
\end{center}
\end{prop}
\begin{proof}
This is an immediate consequence of the decomposition \eqref{eq:diagdecompo} in view of the definitions \ref{def:cartesiandiag} and \ref{df:412b}, and the fact that the middle vertex in both isomorphic realizations of the left and the right Cartesian squares is determined by the Cartesian property uniquely up to a unique isomorphism.
\end{proof}

\begin{df}\label{df:lrclassesb}
A Cartesian square as in \eqref{eq:pullbundle} is said to belong to the \emph{left class} if $f$ is surjective, and is said to belong to the \emph{right class} if $\gamma$ is injective.
\end{df}

\begin{cor}\label{cor:spaces}
The category of equivariant spaces, with basic objects, Cartesian squares, left class, right class, fiber and base change, as defined above satisfy the axioms in Subsection \ref{sec:21}.
\end{cor}

\begin{proof}
The 2-out-of-3 property is Theorem \ref{thm:pasting}. The decomposition property is Prop.~\ref{prop:413b}.
All the other properties are obvious from the definitions \ref{df:412b} and \ref{df:lrclassesb}.
\end{proof}

Several important notions of equivariant topology can be rewritten in terms of Cartesian morphisms of families of group actions.

\begin{example}[\textbf{\textit{Pullback of a family of group actions}}]
Consider a morphism of families of group actions
\begin{equation}\label{eq:squarenumber1}
\begin{tikzpicture}[baseline=(current bounding box.center)]

\node (Hp) at (0,2.5) {$G$};
\node (H) at (2,2.5) {$G$};
\node (Ap) at (0,1.5) {$U'$};
\node (A) at (2,1.5) {$U$};
\node (Bp) at (0,0) {$X'$};
\node (B) at (2,0) {$X$};

\path[To-,font=\footnotesize]
		(B) edge node[above] {$f$} (Bp)
		(B) edge node[right] {$q$} (A)
		(H) edge node[above] {$=$} (Hp)
		(A) edge node[above] {$\widetilde{f}$} (Ap)
		(Bp) edge node[left] {$q'$} (Ap);

\end{tikzpicture}
\end{equation}
Then, the map \eqref{eq:Gbmap}
\[
U'\times^{G}G \longrightarrow
X'\times_X U
\;,\qquad
[(u',g)]\mapsto 
\bigl(q'(u'),\widetilde{f}(u')g \bigr)
\]
under the identification $U'\times^{G}G\cong U'$ reads as
\[
U' \longrightarrow
X'\times_X U
\;,\qquad
u'\mapsto 
\bigl(q'(u'),\widetilde{f}(u') \bigr) .
\]
Thus, \eqref{eq:squarenumber1} is Cartesian if and only if the square is a pullback.
\end{example}

\begin{example}[\textbf{\textit{Orbit spaces}}]\label{ex:3.3a}
Consider a morphism from an arbitrary family $(G,U\to X)$ of group actions to the terminal family
\begin{equation}\label{eq:squarenumber2}
\begin{tikzpicture}[baseline=(current bounding box.center)]

\node (Hp) at (0,2.5) {$G$};
\node (H) at (2,2.5) {$\{e\}$};
\node (Ap) at (0,1.5) {$U$};
\node (A) at (2,1.5) {$\{*\}$};
\node (Bp) at (0,0) {$X$};
\node (B) at (2,0) {$\{*\}$};

\path[To-,font=\footnotesize]
		(B) edge (Bp)
		(B) edge node[right] {$=$} (A)
		(H) edge (Hp)
		(A) edge (Ap)
		(Bp) edge node[left] {$q$} (Ap);

\end{tikzpicture}
\end{equation}
Then, the map \eqref{eq:Gbmap}
\[
U\times^{G}\{*\} \longrightarrow
X\times_{\{*\}} \{*\} \;,\qquad
[(u, e)]\mapsto 
\bigl(q(u),* \bigr)
\]
under the obvious identifications reads as
\[
U/G \longrightarrow
X \;,\qquad
[u]\mapsto  q(u)
\;.
\]
Thus, \eqref{eq:squarenumber2} is Cartesian if and only if $q$ is a quotient map onto the space of orbits.
\end{example}

\begin{example}[\textbf{\textit{Slices}}]\label{ex:3.3b}
Assume that we have a $G$-action on $U$, and a $G'$-subaction inclusion $U'\hookrightarrow U$ for some closed subgroup $G'$. This is equivalent to having a morphism of the form
\begin{equation}\label{eq:squarenumber9}
\begin{tikzpicture}[baseline=(current bounding box.center)]

\node (Hp) at (0,2.5) {$G'$};
\node (H) at (2,2.5) {$G$};
\node (Ap) at (0,1.5) {$U'$};
\node (A) at (2,1.5) {$U$};
\node (Bp) at (0,0) {$X$};
\node (B) at (2,0) {$X$};

\path[To-,font=\footnotesize]
		(B) edge node[above] {$=$} (Bp)
		(B) edge node[right] {$q$} (A)
		(H) edge[To-right hook] (Hp)
		(A) edge[To-right hook] (Ap)
		(Bp) edge node[left] {$q'$} (Ap);

\end{tikzpicture}
\end{equation}
where $X=U/G$ is the orbit space.
Then, the map \eqref{eq:Gbmap}
\[
U'\times^{G'}G \longrightarrow
X\times_XU\;,\qquad
[(u',g)]\mapsto 
[(u',g)]\mapsto 
\bigl(q(u'),u'g \bigr)
\]
under the identification $U\cong X\times_XU$ reads as
\begin{equation}\label{eq:readas}
U'\times^{G'}G \longrightarrow
U \;,\qquad
[(u',g)]\mapsto 
u'g \;.
\end{equation}
Thus, \eqref{eq:squarenumber9} is Cartesian if and only if the last map is an isomorphism, which means that the inclusion $U'\hookrightarrow U$ is a $G'$-slice of the $G$-action on $U$ (cf.~the abstract definition of slices in \cite{ncat}).
\end{example}

\begin{example}[\textbf{\textit{Change of base}}]
Consider the following morphism
\begin{equation}\label{eq:basclass}
\begin{tikzpicture}[baseline=(current bounding box.center)]

\node (Hp) at (0,2.5) {$G$};
\node (H) at (2,2.5) {$G$};
\node (Ap) at (0,1.5) {$U'$};
\node (A) at (2,1.5) {$U$};
\node (Bp) at (0,0) {$X'$};
\node (B) at (2,0) {$X$};

\path[To-,font=\footnotesize]
		(B) edge node[below] {$f$} (Bp)
		(B) edge node[right] {$q$} (A)
		(Bp) edge node[left] {$q'$} (Ap)
		(A) edge node[above] {$\widetilde{f}$} (Ap)
		(H) edge node[above] {$=$} (Hp);
\end{tikzpicture}
\end{equation}
Then, the map \eqref{eq:Gbmap}
\[
U'\times^GG \longrightarrow
X'\times_X U \;,\qquad
[(u',g)]\mapsto 
\bigl(q'(u'),\widetilde{f}(u')g \bigr)
\]
reads as
\[
U' \longrightarrow X'\times_X U \;,\qquad
u'\mapsto 
\bigl(q'(u'),\widetilde{f}(u') \bigr) .
\]
Thus, \eqref{eq:basclass} is Cartesian if and only if
$\widetilde{f}$ is a change of base.
\end{example}

\begin{example}[\textbf{\textit{Change of the structure group}}]
Consider the following morphism
\begin{equation}\label{eq:squarenumber5}
\begin{tikzpicture}[baseline=(current bounding box.center)]

\node (Hp) at (0,2.5) {$G'$};
\node (H) at (2,2.5) {$G$};
\node (Ap) at (0,1.5) {$U'$};
\node (A) at (2,1.5) {$U$};
\node (Bp) at (0,0) {$X$};
\node (B) at (2,0) {$X$};

\path[To-,font=\footnotesize]
		(B) edge node[below] {$=$} (Bp)
		(B) edge node[right] {$q$} (A)
		(Bp) edge node[left] {$q'$} (Ap)
		(A) edge node[above] {$\widetilde{f}$} (Ap)
		(H) edge (Hp);
\end{tikzpicture}
\end{equation}
Then, the map \eqref{eq:Gbmap}
\[
U'\times^{G'}G \longrightarrow
X\times_X U \;,\qquad
[(u',g)]\mapsto 
\bigl(q'(u'),\widetilde{f}(u')g \bigr)
\]
reads as
\[
U'\times^{G'}G \longrightarrow U \;,\qquad
[(u',g)]\mapsto \widetilde{f}(u')g .
\]
Thus, \eqref{eq:squarenumber5} is Cartesian if and only if
$\widetilde{f}$ is a change of structure group.
Thus $U'\to X$ is obtained from $U\to X$ by restriction of the structure group
(and $U\to X$ is obtained from $U'\to X$ by induction of the structure group).
Although the most important application of this construction is the change of structure group of a principal bundle, note that it makes sense for arbitrary families of group actions.
The actions need not be free, and the maps $q$ and $q'$ don't even need to be quotient maps. Also, the map $G'\to G$ need not be an inclusion map (if in the restriction the group homomorphism is injective one usually talks about reduction, while if
in the induction the group homomorphism is surjective one usually talks about extension). The need of such a generality comes from the theory of $G$-structures, with a prominent example of the spin structure related to the group homomorphism $Spin(n)\to O(n)$, which is neither injective nor surjective.
\end{example}

\begin{example}[\textbf{\textit{Free actions}}]\label{ex:principal}
Having a quotient map $q:U\to X$ defined as in Example \ref{ex:3.3a}, consider now a morphism of families of group actions
\begin{equation}\label{eq:squarenumber4}
\begin{tikzpicture}[baseline=(current bounding box.center)]

\node (Hp) at (0,2.5) {$G$};
\node (H) at (2,2.5) {$G$};
\node (Ap) at (0,1.5) {$U\times G$};
\node (A) at (2,1.5) {$U$};
\node (Bp) at (0,0) {$U$};
\node (B) at (2,0) {$X$};

\path[To-,font=\footnotesize]
		(B) edge node[above] {$q$} (Bp)
		(B) edge node[right] {$q$} (A)
		(Bp) edge node[left] {$\widetilde{q}$} (Ap)
		(A) edge node[above] {$\alpha$} (Ap)
		(H) edge node[above] {$=$} (Hp);
\end{tikzpicture}
\end{equation}
Here $\widetilde{q}$ is the projection onto the first Cartesian factor and $\alpha(u,g):=ug$ the group action. The right $G$-action on $U\times G$ is on the second factor. Then, our map \eqref{eq:Gbmap}
\[
(U\times G)\times^{G}G \longrightarrow
U\times_X U \;,\qquad
[((u,g_1),g_2)]\mapsto 
\bigl(u, ug_1g_2 \bigr)
\]
reads as the graph of the $G$-action
\[
U\times G \longrightarrow
U\times_X U \;,\qquad
(u,g)\mapsto (u, ug) .
\]
Thus, \eqref{eq:squarenumber4} is Cartesian if and only if the $G$ action is free and $q:U\to X\cong U/G$ is the quotient map to the orbit space.
\end{example}

\begin{rmk}
One could be tempted to call ``principal bundles'' the objects in Example \ref{ex:principal} (in the category of compact Hausdorff spaces, free and principal actions are the same), but we will reserve that name for the case when the fibration $U\to U/G$ is locally trivial.
In the category of $C^\infty$-manifolds, Ehresmann's Fibration Theorem~\cite{Ehr51} guarantees that every free $C^\infty$-action (of a compact Lie group on a compact smooth manifold) gives rise to a locally trivial fibration, so that the two notions coincide.
In the category of topological spaces, on the other hand, local triviality is an additional condition that one has to assume when needed.
\end{rmk}

\begin{example}[\textbf{\textit{Global sections}}]\label{ex:3.3bb}
Assume that in diagram \eqref{eq:squarenumber9} $q$ is the quotient map of a free $G$-action and $G'$ is trivial:
\begin{equation}\label{eq:squarenumber10}
\begin{tikzpicture}[baseline=(current bounding box.center)]

\node (Hp) at (0,2.5) {$\{e\}$};
\node (H) at (2,2.5) {$G$};
\node (Ap) at (0,1.5) {$U'$};
\node (A) at (2,1.5) {$U$};
\node (Bp) at (0,0) {$X$};
\node (B) at (2,0) {$X$};

\path[To-,font=\footnotesize]
		(B) edge node[above] {$=$} (Bp)
		(B) edge node[right] {$q$} (A)
		(H) edge[To-right hook] (Hp)
		(A) edge[To-right hook] (Ap)
		(Bp) edge node[left] {$q'$} (Ap);

\end{tikzpicture}
\end{equation}
Then, the notion of slice of Example \ref{ex:3.3b} specifies to the notion of section of the quotient map $q$ of the free $G$-action, as we show below.
The diagram \eqref{eq:squarenumber10} is Cartesian if and only if the $G$-equivariant map \eqref{eq:readas}, which now reads as
\[
U'\times G \longrightarrow
U \;,\qquad
(u',g)\mapsto 
u'g \;,
\]
is invertible. This means that it induces an isomorphism of quotients in the following sequence of isomorphisms
\[
\setlength{\arraycolsep}{3pt}
\begin{array}{rlclcl}
U' &\stackrel{\cong}{\longrightarrow} &(U'\times G)/G
&\longrightarrow & U/G
&\stackrel{\cong}{\longrightarrow}X , \\[2pt]
u' &\longmapsto & [(u',e)]
&\longmapsto & [u'] 
&\longmapsto q(u') .
\end{array}
\]
Therefore, the map $q'=q|_{U'}:U'\to X$ being the composition of these three isomorphisms is an isomorphism as well. Composing $q'^{-1}$ with the inclusion $U'\hookrightarrow U$ we obtain a section $s:X\to U$ of the quotient map $q:U\to X$.

\medskip

Let us recall a very important result that a section of the quotient map of a free action $q:U\to X$ can be viewed equivalently as a $G$-equivariant map $\widetilde{s}:U\to G$. This can be understood using our notion of Cartesian diagram as well.

\medskip

To see a $G$-equivariant map $\widetilde{s}:U\to G$ as a section, consider the diagram
\begin{equation}\label{eq:squarenumber3}
\begin{tikzpicture}[baseline=(current bounding box.center)]

\node (Hp) at (0,2.5) {$G$};
\node (H) at (2,2.5) {$G$};
\node (Ap) at (0,1.5) {$U$};
\node (A) at (2,1.5) {$G$};
\node (Bp) at (0,0) {$X$};
\node (B) at (2,0) {$\{*\}$};

\path[To-,font=\footnotesize]
		(B) edge (Bp)
		(B) edge  (A)
		(H) edge node[above] {$=$} (Hp)
		(A) edge node[above] {$\widetilde{s}$} (Ap)
		(Bp) edge node[left] {$q$} (Ap);

\end{tikzpicture}
\end{equation}
where $G$ acts on itself  by right translations.
The map \eqref{eq:Gbmap}
\[
U\times^{G}G \longrightarrow
X\times_{\{*\}} G \;,\qquad
[(u, g)]\mapsto 
\bigl(q(u), \widetilde{s}(u)g \bigr)
\]
reads as
\begin{equation}\label{eq:trivializ}
U \longrightarrow
X \times G\;,\qquad
[u]\mapsto  \bigl(q(u), \widetilde{s}(u) \bigr)
\;.
\end{equation}
Thus, \eqref{eq:squarenumber3} is Cartesian if and only if $q$ is a projection in a trivial principal $G$-bundle.
Then, the map $s:X\to U$ given by the composition of the map $X\ni x\mapsto (x,e)\in X\times G$ with the inverse $X\times G\to U$ of the trivialization \eqref{eq:trivializ} is a section of $q:U\to X$.

\medskip

The well-known equivalence of the two viewpoints above can be derived in our language from Theorem \ref{thm:pasting}
by looking at the following diagram
\begin{equation}\label{eq:squarenumber11}
\begin{tikzpicture}[baseline=(current bounding box.center),scale=2.7]

\node (Hpp) at (0,1.9) {$\{e\}$};
\node (Hp) at (1,1.3) {$G$};
\node (H) at (2,1.9) {$G$};
\node (App) at (0,1) {$U'$};
\node (Ap) at (1,0.4) {$U$};
\node (A) at (2,1) {$G$};
\node (Bpp) at (0,0) {$X$};
\node (Bp) at (1,-0.6) {$X$};
\node (B) at (2,0) {$\{*\}$};

\path[To-,font=\footnotesize,inner sep=2pt]
		(H) edge node[below=2pt,sloped] {$=$} (Hp)
		(Hp) edge (Hpp)
		(H) edge (Hpp)
		(A) edge node[below right] {$\widetilde{s}$} (Ap)
		(Ap) edge (App)
		(A) edge (App)
		(B) edge (Bp)
		(Bp) edge node[below=2pt,sloped] {$=$} (Bpp)
		(B) edge node[pos=0.49,white] {\rule{10pt}{10pt}} (Bpp) 
		(B) edge (A)
		(Bp) edge node[right,pos=0.35] {$q$} (Ap)
		(Bpp) edge node[left] {$q'$} (App);

\end{tikzpicture}
\end{equation}
The generalized canonical map of the back face $U'\times^{\{e\}}G\to X\times_{\{*\}}G$ reads as $q'\times G:U'\times G\to X\times G$, and the induced map of $G$-quotients is $q':U'\to X$. Thus, the back face is Cartesian if and only if $q'$ is an isomorphism.

Under this assumption, from Theorem \ref{thm:pasting} we deduce that the left face in \eqref{eq:squarenumber11} is Cartesian (existence of a global section as a slice) if and only if the right face is Cartesian (existence of a global section as a $G$-equivariant map).
\end{example}

Let $X$ be a compact Hausdorff space and $G$ a compact Hausdorff group.
We shall denote by $\Free_X(G)$ the category whose objects are free $G$-actions with quotient $X$, and whose morphisms are morphism \eqref{eq:pullbundle} with $\gamma$ and $f$ being identities.
We denote by $\Prin_X(G)$ the full subcategory of $\Free_X(G)$ whose objects are locally trivial principal $G$-bundles over $X$. It is well known that $\Prin_X(G)$ is a groupoid (i.e.~that every morphism of principal $G$-bundles over the same base covering the identity is an isomorphism).
Below, we prove the slightly more general statement that $\Free_X(G)$ is a groupoid.
Clearly this implies that $\Prin_X(G)$ is a groupoid as a full subcategory.
We shall prove this statement using only conservativity of the base change functor $U'\times_X(-)$ (i.e.~the fact that it reflects isomorphisms), which follows from Lemma \ref{lemma:conservative}.
Notice that if one works with free actions of non-compact groups on non-compact spaces,
then one usually concludes that $U'\times_X(-)$ is conservative using the assumption of local triviality.

\begin{prop}\label{prin:groupoid}
The category $\Free_X(G)$ is a groupoid.
\end{prop}

\begin{proof}
Suppose we have a morphism in $\Free_X(G)$,
\begin{center}
\begin{tikzpicture}

\node (Hp) at (0,2.6) {$G$};
\node (H) at (2,2.6) {$G$};
\node (Ap) at (0,1.5) {$U'$};
\node (A) at (2,1.5) {$U$};
\node (Bp) at (0,0) {$X$};
\node (B) at (2,0) {$X$};

\path[To-,font=\footnotesize]
		(B) edge node[below] {$=$} (Bp)
		(B) edge (A)
		(Bp) edge (Ap)
		(A) edge node[above] {$\widetilde{f}$} (Ap)
		(H) edge node[above] {$=$} (Hp);
\end{tikzpicture}
\end{center}
Observe that the following diagram in the category of $G$-spaces is commutative
\begin{equation*}
\begin{tikzpicture}[xscale=3,yscale=2,baseline=(current bounding box.center)]

\node (A) at (0,2) {$U'\times_U(U\times_XU)$};
\node (B) at (2,2) {$U'\times_U(U\times G)$};
\node (C) at (0,1) {$U'\times_X U$};
\node (D) at (2,1) {$U'\times G$};
\node (E) at (1,0) {$U'\times_X U'$};

\path[To-,font=\footnotesize]
		(A) edge node[above] {$U'\times_U\mathsf{can}$} (B)
		(C) edge node[left] {$\cong$} (A)
		(B) edge node[left] {$\cong$} (D)
		(C) edge node[below left,inner sep=1pt] {$U'\times_X\widetilde{f}$} (E)
		(E) edge node[below right,inner sep=1pt] {$\mathsf{can}'$} (D);
\end{tikzpicture}
\end{equation*}
as shown by the following inspection
\[
\begin{tikzpicture}[xscale=2.5,yscale=1.8,baseline=(current bounding box.center)]

\node (A) at (0,2) {$\big(u',\big(\widetilde{f}(u'),\widetilde{f}(u')g\big)\big)$};
\node (B) at (2,2) {$\big(u',\big(\widetilde{f}(u'), g\big)\big)$};
\node (C) at (0,1) {$(u',\widetilde{f}(u')g)$};
\node (D) at (2,1) {$(u', g)$};
\node (E) at (1,0) {$(u',u'g)$};

\path[To-|]
		(A) edge (B)
		(C) edge (A)
		(B) edge (D)
		(C) edge (E)
		(E) edge (D);
\end{tikzpicture}
\]
In the above diagram all maps except $U'\times_X\widetilde{f}$ are obviously isomorphisms, which means that $U'\times_X\widetilde{f}$ is an isomorphism as well (a $G$-equivariant homeomorphism).
Since $U'\times_X(-)$ reflects isomorphims (by Lemma \ref{lemma:conservative} applied to the quotient map $U'\to X$), this implies that
$\widetilde{f}$ is an isomorphism as well.
\end{proof}

We are now ready to give a last example of Cartesian morphism \eqref{eq:morpp}.

\begin{example}[\textbf{\textit{General morphisms of free actions}}]\label{ex:3.3e}
Every morphism \eqref{eq:morpp} between free actions is Cartesian (in particular, every morphism of principal bundles is Cartesian).
Indeed, from Prop.~\ref{prin:groupoid} we deduce that the triangle
\begin{equation}\label{eq:ex33e}
\begin{tikzpicture}[scale=1.3,baseline=(current bounding box.center)]

\node (fF) at (45:2) {$X'\times_X U$};
\node (X) at (0,0) {$X'$};
\node (V) at (135:2) {$U'\times^{G'}G$};

\path[-To,font=\footnotesize]
		(V) edge (X)
		(V) edge node[above] {$k$} (fF)
		(fF) edge (X);
\end{tikzpicture}
\end{equation}
in diagram \eqref{eq:diagdecompo} (a morphism of free $G$-actions with the same orbit space $X'$) is an isomorphism.
\end{example}

\section{From spaces to algebras}\label{sec:32}
\noindent
In this section we translate equivariant topology/algebraic geometry into the language of associative (not necessarily commutative) algebras.
In the following, we tacitly assume that every algebra is unital, and every homomorphism of algebras respects the units.

Let us start with recalling the Peter--Weyl framework. 
Compact quantum groups were introduced by Woronowicz in the 80s \cite{w-sl87}, and their actions were studied first in \cite{pod87}.
In \cite{Ell00}, Ellwood gave a definition of \emph{free}, \emph{proper} and \emph{principal} compact quantum group actions on C*-algebras, proving that an action is principal if and only if it is free and proper, and proving that in the case of classical compact Hausdorff groups acting on classical locally compact Hausdorff one recovers the standard definitions. In \cite{BDCH17}, Baum, De Commer and Hajac construct, given a compact quantum group $G$ and the associated dense Hopf $*$-subalgebra $\mathcal{O}(G)\subseteq C(G)$ of Woronowicz \cite{w-sl87}, their \emph{Peter--Weyl functor} from the category of compact quantum $G$-spaces to the category of $\mathcal{O}(G)$-comodule algebras. They prove that the Ellwood freeness condition is equivalent to the Hopf--Galois condition \cite[Thm.~0.4]{BDCH17}. (This extends a previous result about classical compact (Hausdorff) principal bundles \cite{bh14}.)
In particular, in terms of the Peter--Weyl functor, the notion of \emph{free action} of a compact quantum group can be equivalently described in the framework of \emph{Hopf--Galois extensions}.

In the classical case, the theory of free actions (and of locally trivial principal bundles) depends heavily on the existence of pullbacks of spaces.
These dualize to pushouts in the category of commutative algebras, that means balanced tensor products.
When we pass to noncommutative algebras, we face two problems: first, the balanced tensor product of two associative algebras is in general not an algebra; second, pushouts in such a category, i.e.~amalgamated free products,
can be noncommutative even if the factors are commutative and, as such, cannot be an extension of the classical pullbacks of classical spaces.
For similar reasons, Hopf algebras and their coactions are not categorical dual to groups and their actions.
Therefore, one needs a  different categorical framework to generalize the classical equivariant topology to the context of noncommutative geometry, which is what we provide below.

Let us start with a more general algebraic framework which dualizes the framework of families of group actions, cf.~Def.~\ref{df:Hiam}. In such generality, we do not assume anything about the ground field $\Bbbk$.

The category in which we work in this section is the opposite category of right Hopf-comodule algebras. Objects are pairs $(H,A)$ with $H$ a Hopf algebra, $A$ and right $H$-comodule algebra.
Morphisms in this category are pairs consisting of a Hopf-algebra homomorphism $\chi: H\to H'$ and an algebra map $\alpha: A\to A'$ satisfying $\alpha(a_{(0)})\otimes\chi(a_{(1)})=\alpha(a)_{(0)}\otimes\alpha(a)_{(1)}$ in the Sweedler--Heyneman notation.   By a \emph{basic} Hopf-comodule algebra we mean an algebra equipped with the coaction of the trivial Hopf algebra $\Bbbk$. Note that morphisms between basic objects are simply homomorphisms of unital algebras. In what follows, in all our squares the trivial Hopf algebra coacting on basic objects at the bottom is omitted but understood.

\begin{df}\label{df:Hiam}
A \emph{Hopf-algebra-invariant algebra map} is a datum $(H,A\leftarrow B)$, where $H$ is a Hopf-algebra, $A$ and $B$ are right $H$-comodule algebras, with trivial coaction on $B$, and $B\to A$ is an $H$-comodule algebra map.
A morphism $(H',A'\leftarrow B')\leftarrow (H,A\leftarrow B)$ of Hopf-algebra-invariant algebra maps is the datum of a Hopf algebra map and a commutative square:
\begin{equation}\label{eq:diagaction}
\begin{tikzpicture}[baseline=(current bounding box.center)]

\node (Hp) at (0,2.3) {$H'$};
\node (H) at (2,2.3) {$H$};
\node (Ap) at (0,1.5) {$A'$};
\node (A) at (2,1.5) {$A$};
\node (Bp) at (0,0) {$B'$};
\node (B) at (2,0) {$B$};

\path[-To,font=\footnotesize]
		(H) edge node[above] {$\chi$} (Hp)
		(B) edge node[below] {$\beta$} (Bp)
		(B) edge node[right] {$\varphi$} (A)
		(A) edge node[above] {$\alpha$} (Ap)
		(Bp) edge node[left] {$\varphi'$} (Ap);

\end{tikzpicture}
\end{equation}
where $\alpha$ is an $H'$-comodule algebra map, $\beta$ is an algebra homomorphism, and vertical triples are Hopf-algebra-invariant algebra maps.
\end{df}

Note that, thinking dually of algebras as spaces, we can think of a Hopf-algebra-invariant algebra map $(H,A\leftarrow B)$ as a family of quantum group actions with $B$ being the algebra corresponding to the parameter space.
When $B\to A$ is injective, we will call $(H,A\leftarrow B)$ an \emph{$H$-extension}, in accordance with classical Galois theory of field extensions, where a datum $(G,L\leftarrow K)$ of a group $G$ acting by automorphisms on a field $L$ with a $G$-fixed subfield $K$ is called a $G$-extension.

In this framework, we can replace \eqref{eq:Gbmap} by a morphism in the category $_{B'}\mathcal{M}^H_A$ of representations of $B'$ on right relative $(A,H)$ Hopf-modules. Recall that an object in $_{B'}\mathcal{M}^H_A$ is a $(B',A)$-bimodule and a right $H$-comodule, with actions and coactions compatible in the obvious way.

In order to define such a morphism, we need a preliminary lemma.

\begin{lemma}\label{lemma310}
Consider a morphism \eqref{eq:diagaction} of Hopf-algebra-invariant algebra maps. Then,
\begin{enumerate}[label=(\roman*),leftmargin=2em]
\item\label{lemma310:1}
$A'\mathbin{\Box}^{H'}\!H$ is a right $H$-comodule subalgebra of $A'\otimes H$, with respect to the multiplication
\[
(a'\otimes h)(\tilde{a}'\otimes\tilde{h}):=a'\tilde{a}'\otimes h\tilde{h} .
\]
and $H$-coaction
\[
(a'\otimes h) _{(0)}\otimes (a'\otimes h) _{(1)} :=(a'\otimes h _{(1)})\otimes h _{(2)} .
\]

\item\label{lemma310:2}
$B'$ maps into the subalgebra of $H$-coaction invariants in $A'\mathbin{\Box}^{H'}\!H$, via the map
\[
b'\mapsto\varphi'(b')\otimes 1 .
\]
\end{enumerate}
Equivalently, the above two properties say that $(H,A'\mathbin{\Box}^{H'}\!H\leftarrow B')$ is a Hopf-algebra-invariant algebra map.
Furthermore,
\begin{enumerate}[label=(\roman*),leftmargin=2em,resume]
\item\label{lemma310:3} 
A morphism of right $H$-comodule algebras $A\to A'\mathbin{\Box}^{H'}\!H$ is given by
\end{enumerate}
\begin{equation}\label{eq:42iii}
a\mapsto \alpha(a_{(0)})\otimes a_{(1)} . \smallskip
\end{equation}
In particular, the algebra maps as in \ref{lemma310:2} and \ref{lemma310:3}
and the right $H$-comodule algebra structure in \ref{lemma310:1} make $A'\mathbin{\Box}^{H'}\!H$ an object as in $_{B'}\mathcal{M}^H_A$.
\end{lemma}

\begin{proof}
\ref{lemma310:1}
Let $\sum_ia'_i\otimes h_i$ and $\sum_j\tilde a'_j\otimes\tilde h_j$ be in $A'\mathbin{\Box}^{H'}\!H$.
By an explicit computation
\begin{align*}
\sum_{i,j} & (a'_i\tilde{a}'_j)_{(0)}\otimes (a'_i\tilde{a}'_j)_{(1)} \otimes h_i\tilde{h}_j
\\ &=
\sum_{i,j}a'_{i(0)}\tilde{a}'_{j(0)}\otimes a'_{i(1)}\tilde{a}'_{j(1)} \otimes h_i\tilde{h}_j
\\ &=
\sum_i(a'_{i(0)}\otimes a'_{i(1)}\otimes h_i)\sum_j(\tilde{a}'_{j(0)}\otimes \tilde{a}'_{j(1)} \otimes \tilde{h}_j)
\\ &=
\sum_i(a'_i\otimes \chi(h_{i(1)})\otimes h_{i(2)})\sum_j(\tilde{a}'_j\otimes \chi(\tilde{h}_{j(1)})\otimes \tilde{h}_{j(2)})
\\ &=
\sum_{i,j}a'_i\tilde{a}'_j\otimes \chi(h_{i(1)})\chi(\tilde{h}_{j(1)})\otimes h_{i(2)}\tilde{h}_{j(2)}
\\ &=
\sum_{i,j}a'_i\tilde{a}'_j\otimes \chi(h_{i(1)}\tilde{h}_{j(1)})\otimes h_{i(2)}\tilde{h}_{j(2)}
\\ &=
\sum_{i,j}a'_i\tilde{a}_j'\otimes \chi((h_i\tilde{h}_j)_{(1)})\otimes (h_i\tilde{h}_j)_{(2)} .
\end{align*}
Hence the product still belongs to $A'\mathbin{\Box}^{H'}\!H$. Concerning the coaction,
for all $\sum_ia'_i\otimes h_i\in A'\mathbin{\Box}^{H'}\!H$ we show that the first leg of the coaction of $H$ on such an element, which is given by
$\sum_i(a'_i\otimes h _{i(1)})\otimes h _{i(2)}$, belongs to $A'\mathbin{\Box}^{H'}\!H$:
\begin{align*}
\sum_i & (a'_{i(0)}\otimes a'_{i(1)}\otimes h _{i(1)})\otimes h _{i(2)}
\\ &=
\sum_i (a'_{i}\otimes h_{i(1)}\otimes h _{i(2)(1)})\otimes h _{i(2)(2)}
\\ &=
\sum_i (a'_{i}\otimes h_{i(1)(1)}\otimes h _{i(1)(2)})\otimes h _{i(2)} \; .
\end{align*}
The latter implies that the composition of algebra maps
\[
A'\mathbin{\Box}^{H'}\!H\stackrel{\iota}{\longhookrightarrow} A'\otimes H\xrightarrow{A'\otimes\Delta} (A'\otimes H)\otimes H
\]
factors as
\[
A'\mathbin{\Box}^{H'}\!H\xrightarrow{\;\;\delta\;\;}(A'\mathbin{\Box}^{H'}\!H)\otimes H\stackrel{\:\iota\otimes H}{\longhookrightarrow}
(A'\otimes H)\otimes H .
\]
Hence, by injectivity of $\iota\otimes H$, the coaction $\delta$ is an algebra map.

\medskip

\noindent
\ref{lemma310:2}
This follows immediately from the fact that $\varphi'$ maps $B'$ to the subalgebra of $A'$ of elements invariant under the $H'$-coaction.

\medskip

\noindent
\ref{lemma310:3}
Since $\alpha:A\to A'$ intertwines the $H$-coaction on $A$ with the $H'$-coaction on $A'$, and the coaction $A\to A\otimes H$ is an algebra map, it immediately follows that \eqref{eq:42iii} is an algebra map. The $H$-equivariance of the map \eqref{eq:42iii} can be easily checked.
\end{proof}

Recall that the pushout of morphisms
\[
B'\xleftarrow{\quad\beta\quad}B\xrightarrow{\quad\varphi\quad} A
\]
of commutative algebras is given by the balanced tensor product $B'\otimes_BA$. Note that in the category of associative algebras the pushout is the amalgamated coproduct \mbox{$B'\star_B A$}, which restricted to the full subcategory of commutative algebras is different from $B'\otimes_BA$, while the balanced tensor product of noncommutative algebras has no canonical algebra structure. Although noncommutative geometry uses both constructions, they both cause problems for noncommutative generalizations of geometric notions written in the language of commutative algebras. One remedy for this conundrum is to regard $A\otimes_B(-)$ and $B'\otimes_B(-)$ as monads on the category of left $B$-modules and use a distributive law to make their composition an algebra \cite{Bec69}. This guarantees that for commutative algebras with the symmetry of the balanced tensor product as the distributive law one reproduces the pushout of commutative algebras.

The above distributive law reads then as a $B$-bimodule map $\sigma:A\otimes_B B'\to B'\otimes_BA$
making the following bow-tie diagram commute\smallskip
\begin{center}
\begin{tikzpicture}[scale=3.6]

\node (v1) at (0,0) {$A$};
\node (v2) at (36:1) {$B'\otimes_B A$};
\node (v3) at (36+72:1) {$B'\otimes_B A\otimes_B A$};
\node (v4) at (36+72*2:1) {$A\otimes_B B'\otimes_B A$};
\node (v5) at (36+72*3:1) {$A\otimes_B A\otimes_B B'$};
\node (v6) at (36+72*4:1) {$A\otimes_{B}B'$};
\node (v7) at ($(v2)+(v6)$) {$B'$};
\node (v8) at ($(v7)-(v3)$) {$A\otimes_B B'\otimes_B B'$};
\node (v9) at ($(v7)-(v4)$) {$B'\otimes_B A\otimes_B B'$};
\node (v0) at ($(v7)-(v5)$) {$B'\otimes_B B'\otimes_B A$};

\path[-To,font=\footnotesize]
	(v6) edge node[fill=white] {$\sigma$} (v2)
	(v1) edge node[above,sloped] {$\beta\otimes_B A$} (v2)
	(v1) edge node[below,sloped] {$A\otimes_B \beta$} (v6)
	(v7) edge node[above,sloped] {$B'\otimes_B \varphi$} (v2)
	(v7) edge node[below,sloped] {$\varphi\otimes_B B'$}  (v6)
	(v5) edge node[below,sloped] {$\quad m\otimes_B B'$} (v6)
	(v8) edge node[below,sloped] {$A\otimes_B m'\quad$} (v6)
	(v5) edge node[below,sloped] {$A\otimes_B\sigma$} (v4)
	(v4) edge node[above,sloped] {$\sigma\otimes_BA$} (v3)
	(v3) edge node[above,sloped] {$\quad B'\otimes_Bm$} (v2)
	(v8) edge node[below,sloped] {$\sigma\otimes_BB'$} (v9)
	(v9) edge node[above,sloped] {$B'\otimes_B\sigma$} (v0)
	(v0) edge node[above,sloped] {$m'\otimes_B A\quad$} (v2);

\end{tikzpicture}
\end{center}
Note that in the diagrams we omit to write natural identifications $B\otimes_BA\cong A\cong A\otimes_B B$ etc. We will refer to $\sigma$ as an \emph{algebra-factorization structure}.
With such a structure we can define an associative product on $B'\otimes_BA$ given by
\begin{equation}\label{eq:sigmaA}
(b'\otimes_Ba)(\tilde b'\otimes_B\tilde a):=
b'\tilde b'_\sigma\otimes_Ba^\sigma\tilde a
\end{equation}
where we use the notation $b'_\sigma\otimes_Ba^\sigma:=\sigma(a\otimes_Bb')$ (with summation sign suppressed). This algebra structure makes the following diagram of algebras commute
\begin{equation}\label{eq:sigmaB}
\begin{tikzpicture}[scale=1.2,baseline=(current bounding box.center)]

\node (Ap) at (0,1.5) {$B'\otimes_BA$};
\node (A) at (2,1.5) {$A$};
\node (Bp) at (0,0) {$B'$};
\node (B) at (2,0) {$B$};

\path[-To,font=\footnotesize]
		(B) edge node[above] {$\beta$} (Bp)
		(B) edge node[right] {$\varphi$} (A)
		(A) edge node[above] {$\quad\beta\otimes_BA$} (Ap)
		(Bp) edge node[left] {$B'\otimes_B\varphi$} (Ap);

\end{tikzpicture}
\end{equation}
Conversely, every algebra structure on $B'\otimes_BA$ fitting in the commutative diagram of algebras \eqref{eq:sigmaB} is of the form \eqref{eq:sigmaA} with $\sigma$ an algebra-factorization structure.
To see this, in \eqref{eq:sigmaA} take $b'=1$, $\tilde a=1$ and define $\sigma(a\otimes_B\tilde b'):=(1\otimes_Ba)(\tilde b'\otimes 1)$ and check that associativity and commutativity of \eqref{eq:sigmaB} imply commutativity of the bow-tie diagram.

\begin{lemma}\label{lemma:ffAB}
In \eqref{eq:sigmaB}, if $A$ is faithfully flat as a right $B$-module and $\sigma$ is invertible, then 
$B'\otimes_B A$ is faithfully flat as a right $B'$-module.
\end{lemma}

\begin{proof}
Note that every left $B'$-module $N'$ can (and will) be regarded as a left $B$-module via $\beta:B\to B'$.
Let
\begin{equation}\label{eq:seq1}
0\longrightarrow N_1'\longrightarrow N_2'\longrightarrow N_3'\longrightarrow 0
\end{equation}
be a short sequence of $B'$-modules, and consider the induced short sequence of $(B'\otimes_B A)$-modules
\begin{equation}\label{eq:seq2}
0\longrightarrow 
(B'\otimes_B A)\otimes_{B'}N_1'\longrightarrow
(B'\otimes_B A)\otimes_{B'}N_2'\longrightarrow
(B'\otimes_B A)\otimes_{B'}N_3'\longrightarrow 0 .
\end{equation}
Note that the composition of the following isomorphisms
\[
(B'\otimes_BA)\otimes_{B'}N' \xrightarrow{\;\;\sigma^{-1}\otimes_{B'}N'\;\;}(A\otimes_BB')\otimes_{B'}M'\xrightarrow{\quad\cong\quad}A\otimes_BN'
\]
allows to transform \eqref{eq:seq2} into the sequence
\begin{equation}\label{eq:seq3}
0\longrightarrow 
A\otimes_{B}N_1'\longrightarrow
A\otimes_{B}N_2'\longrightarrow
A\otimes_{B}N_3'\longrightarrow 0
\end{equation}
and \eqref{eq:seq2} is exact ${}_{B'\otimes_B A}\mathcal{M}$ if and only if \eqref{eq:seq3} is exact in ${}_{A}\mathcal{M}$.

If \eqref{eq:seq1} is exact in ${}_{B'}\mathcal{M}$, 
then it is exact in ${}_{B}\mathcal{M}$ because the restriction of scalars is an exact functor, 
and this implies that \eqref{eq:seq3} is exact because by hypothesis $A$ is a flat right $B$-module.
Thus, $(B'\otimes_BA)\otimes_{B'}(-)$ preserves exactness.

If \eqref{eq:seq3} is exact in ${}_{A}\mathcal{M}$, 
then \eqref{eq:seq1} is exact in ${}_{B}\mathcal{M}$ because by hypothesis $A$ is a faithfully flat right $B$-module. But since all morphisms in \eqref{eq:seq1} are $B'$-linear, the sequence is exact in ${}_{B'}\mathcal{M}$. Thus, $(B'\otimes_BA)\otimes_{B'}(-)$ reflects exactness.
\end{proof}

For the purpose of pulling back Hopf-algebra-invariant algebra maps, we need a notion of equivariance for algebra-factorization structures.
When $A$ is a right \mbox{$H$-comodule} algebra, the algebra factorizaton structure $\sigma:A\otimes_B B'\to B'\otimes_BA$ will be called \mbox{\emph{$H$-equivariant}} if it makes the following diagram commute
\begin{center}
\begin{tikzpicture}

\node (A) at (0,2) {$A\otimes_B B'$};
\node (B) at (5,2) {$B'\otimes_B A$};
\node (C) at (0,0) {$(A\otimes_B B')\otimes H$};
\node (D) at (5,0) {$(B'\otimes_B A)\otimes H$};

\path[-To,font=\footnotesize]
		(A) edge node[above] {$\sigma$} (B)
		(A) edge (C)
		(B) edge (D)
		(C) edge node[above] {$\sigma\otimes H$} (D);

\end{tikzpicture}
\end{center}
where the vertical arrows are the coactions on the balanced tensor products induced by the coaction on $A$.
Explicitly, commutativity of the above diagrams reads as the condition
\begin{equation}\label{eq:sigmaequiv}
\tilde b'_\sigma\otimes_B a^{\sigma}\!{}_{(0)}\otimes_B a^{\sigma}\!{}_{(1)}
=\tilde b'_\sigma\otimes_B a_{(0)}\!{}^{\sigma}\otimes a_{(1)}
\end{equation}
for all $\tilde b'\in B'$ and $a\in A$.

Note that the above conditions, provided $\sigma$ is invertible, are invariant under swapping of $A$ and $B'$ and replacing $\sigma$ by its inverse $\sigma^{-1}$. This generalizes such a symmetry of the pullback of commutative algebras, which can be tought as special case where $\sigma$ is the flip.

\begin{lemma}\label{lemma312}
Consider a morphism \eqref{eq:diagaction} of Hopf-algebra-invariant algebra maps
and let $\sigma:A\otimes_B B'\to B'\otimes_BA$ be an $H$-equivariant algebra-factorization structure. Then,
\begin{enumerate}[label=(\roman*),leftmargin=2em]
\item\label{lemma312:1}
$B'\otimes_BA$ is a right $H$-comodule algebra, with respect to the multiplication \eqref{eq:sigmaA}
and the $H$ coaction on $A$.

\item\label{lemma312:2}
$B'$ maps into the subalgebra of $H$-coaction invariants in $B'\otimes_BA$, via the map
\[
b'\mapsto b'\otimes 1 .
\]
\end{enumerate}
Equivalently, the above two properties say that $(H,B'\otimes_BA\leftarrow B')$ is a Hopf-algebra-invariant algebra map. Furthermore,
\begin{enumerate}[label=(\roman*),leftmargin=2em,resume]
\item\label{lemma312:3} $B'\otimes_BA$ is an object in $_{B'}\mathcal{M}^H_A$
with the obvious right $A$-actions,
and with left \mbox{$B'$-action} induced by the algebra structure and by the inclusion in \ref{lemma312:2}.
\end{enumerate}
\end{lemma}

\begin{proof}
\ref{lemma312:1}
The only thing left to check is that the coaction is an algebra map. We have to prove the commutativity of the following diagram
\begin{center}
\begin{tikzpicture}

\node (A) at (0,2) {$(B'\otimes_B A)\otimes_B (B'\otimes_B A)$};
\node (B) at (8,2) {$\big((B'\otimes_B A)\otimes H\big)\otimes_B \big((B'\otimes_B A)\otimes H\big)$};
\node (C) at (0,0) {$B'\otimes_B A$};
\node (D) at (8,0) {$(B'\otimes_B A)\otimes H$};

\path[-To,font=\footnotesize]
		(A) edge (B)
		(A) edge (C)
		(B) edge (D)
		(C) edge (D);

\end{tikzpicture}
\end{center}
In the above diagram, $(B'\otimes_B A)\otimes H$ is an algebra and a $B$-bimodule with obvious structures,
horizontal arrows are coactions, and vertical arrows are multiplications.

Since the commutativity of this diagram reads as
\[
b'\tilde b'_\sigma\otimes_B a^{\sigma}\!{}_{(0)}\tilde a_{(0)}\otimes_B a^{\sigma}\!{}_{(1)}\tilde a_{(1)}
=b'\tilde b'_\sigma\otimes_B a_{(0)}\!{}^{\sigma}\,\tilde a_{(0)}\otimes a_{(1)}\tilde a_{(1)}
\]
this follows immediately from \eqref{eq:sigmaequiv}.

\medskip

\noindent
\ref{lemma312:2} This is obvious.
\end{proof}

We now generalize the map \eqref{eq:Gbmap} to the noncommutative setting.
First, we decompose the commutative square in \eqref{eq:diagaction} as the following commutative diagram in $_{B}\mathcal{M}^H$:
\begin{equation}\label{eq:diagram326}
\begin{tikzpicture}[scale=1.8,baseline=(current bounding box.center)]

\node (E) at (0,3) {$A'$};
\node (F) at (4.5,3) {$A$};
\node (fF) at (3,1) {$B'\otimes_{B}A$};
\node (X) at (0,0) {$B'$};
\node (Y) at (4.5,0) {$B$};
\node (V) at (1.5,2) {$A'\mathbin{\Box}^{H'}\!H$};

\path[To-,font=\footnotesize]
		(E) edge (V) 
		(V) edge (X)
		(E) edge node[above] {$\alpha$} (F)
		(V) edge node[above right,inner sep=1pt] {$\kappa$} (fF)
		(fF) edge (F)
		(E) edge node[left] {$\varphi'$} (X)
		(F) edge node[right] {$\varphi$}  (Y)
		(X) edge node[below] {$\beta$} (Y)
		(fF) edge (X)
		(fF) edge (Y) 
		(V) edge (F);
\end{tikzpicture}
\end{equation}
where the unlabelled maps are given by
\begin{align*}
  A &\to B'\otimes_{B}A, \quad a\mapsto 1\otimes a,
& A &\to A'\mathbin{\Box}^{H'}\!H, \quad a\mapsto \alpha(a_{(0)})\otimes a_{(1)},
\\
  B' &\to B'\otimes_{B}A, \quad b'\mapsto b'\otimes 1,
& B' &\to A'\mathbin{\Box}^{H'}\!H, \quad b'\mapsto\varphi'(b')\otimes 1,
\\
 B&\to B'\otimes_{B}A, \quad b\mapsto\beta(b)\otimes 1,
& A' &\mathbin{\Box}^{H'}\!H\to A', \quad {\textstyle\sum_i}a'_i\otimes h_i\mapsto {\textstyle\sum_i}a'_i\varepsilon(h_i),
\end{align*}
and $\kappa$ is the map
\begin{equation}\label{eq:cangen}
\kappa: B'\otimes_{B}A\rightarrow A'\mathbin{\Box}^{H'}\!H, \qquad
b'\otimes_{B} a\mapsto   \varphi'(b')\alpha(a_{(0)})\otimes a_{(1)} .
\end{equation}
We will call $\kappa$ the \emph{generalized canonical map} associated to the square in \eqref{eq:diagaction}.

Second, it is clear from \eqref{eq:cangen} that 
the map $\kappa$ is a morphism in $_{B'}\mathcal{M}^H_A$, see Lemma \ref{lemma310}\ref{lemma310:3} and \ref{lemma312}\ref{lemma312:3}.

Third, all the maps in \eqref{eq:diagram326} not involving the node $B'\otimes_{B}A$ are algebra morphisms, with algebra structure on $A'\mathbin{\Box}^{H'}\!H$ described in Lemma \ref{lemma310}.

\begin{lemma}\label{lemma:algmor}
Let $\sigma:A\otimes_B B'\to B'\otimes_BA$ be an $H$-equivariant algebra-factorization structure
and consider on $B'\otimes_BA$ the algebra structure of Lemma \ref{lemma312}.
Then, the following are equivalent
\begin{enumerate}[label=(\roman*),leftmargin=2.5em,itemsep=2pt]
\item\label{lemma314:1} $\kappa$ is an algebra map,
\item\label{lemma314:2} all maps in \eqref{eq:diagram326} are algebra maps,
\item\label{lemma314:3} $\sigma$ and $\kappa$ are related by
\begin{equation}\label{eq:sigmakappa}
\kappa\circ\sigma(a\otimes_B\tilde b')=\alpha(a_{(0)})\tilde b'\otimes a_{(1)}
\end{equation}
for all $a\in A$ and $\tilde b'\in B'$.
\end{enumerate}
\end{lemma}

\begin{proof}
The map $B'\to  B'\otimes_BA$,
$A\to  B'\otimes_BA$ and $B\to  B'\otimes_BA$ are algebra morphisms by construction, cf.~diagram \eqref{eq:sigmaB}. Hence \ref{lemma314:1} and \ref{lemma314:2} are equivalent.

The map $\kappa$ is a right $A$-module and left $B'$-module map. Hence, it is a morphism of algebras if and only if
\[
\kappa(1\otimes_B a)\kappa(\tilde b'\otimes_B 1)=\kappa\big((1\otimes_B a)(\tilde b'\otimes_B 1)\big)
=\kappa\circ\sigma(a\otimes_B\tilde b') .
\]
The left hand side, by definition of $\kappa$, is $\alpha(a_{(0)})\tilde b'\otimes a_{(1)}$. This proves
the equivalence of \ref{lemma314:2} and \ref{lemma314:3}.
\end{proof}

\begin{df}\label{df:46}
Consider a morphism \eqref{eq:diagaction} of Hopf-algebra-invariant algebra maps,
let $\sigma:A\otimes_B B'\to B'\otimes_BA$ be an $H$-equivariant algebra-factorization structure,
and consider on $B'\otimes_BA$ the associated algebra structure.
The square \eqref{eq:diagaction} is called \emph{Cartesian} if the map \eqref{eq:cangen} is an isomorphism of algebras.
\end{df}

The following proposition generalizes the braiding studied by {\DJ}ur{\dj}evi{\'c} in \cite{Dur96} from the context of Hopf--Galois extensions to our context of arbitrary Hopf-algebra-invariant algebra maps. We recover the {\DJ}ur{\dj}evi{\'c} braiding by restricting our context to our Example~\ref{ex:algC}.

\begin{prop}\label{prop47}
If the canonical map \eqref{eq:cangen} is bijective, then there exists a unique $H$-equivariant algebra-factorization structure that makes $A\otimes_BB'$ an object in $_{B'}\mathcal{M}^H_A$, and such that $\kappa$ is an isomorphism of algebras. This is given by
\begin{equation}\label{eq:cansigma}
\sigma(a\otimes_B\tilde b')=\kappa^{-1}\big(\alpha(a_{(0)})\tilde b'\otimes a_{(1)}\big)
\end{equation}
for all $a\in A$ and $\tilde b'\in B'$,
\end{prop}

\begin{proof}
If the map \eqref{eq:cangen} is bijective, it transports the multiplication on
$A'\mathbin{\Box}^{H'}\!H$ defined in Lemma \ref{lemma310} to $B'\otimes_BA$, making it an associative algebra. Since $\kappa$ is a morphism in $_{B'}\mathcal{M}^H_A$, this multiplication is automatically uniquely determined by an $H$-equivariant algebra-factorization structure $\sigma$. Using \eqref{eq:sigmaA}
we compute $\sigma$ as follows:
\begin{align*}
\sigma(a\otimes_B\tilde b') &=(1\otimes_Ba)(\tilde b'\otimes_B1) \\
&=
\kappa^{-1}\big(\kappa(1\otimes_Ba)\kappa(\tilde b'\otimes_B1)\big) \\
&=\kappa^{-1}\big(
(\alpha(a_{(0)})\otimes a_{(1)})
(\tilde b'\otimes 1)
\big) \\
&=\kappa^{-1}\big(
\alpha(a_{(0)})\tilde b'\otimes a_{(1)} \big) .
\qedhere
\end{align*}
\end{proof}

In the case of a Cartesian square, we will call \eqref{eq:cansigma} the \emph{canonical algebra-factorization structure}.

\begin{rmk}\label{rmk:flip}
If $A'$ is commutative, then
\[
\sigma(a\otimes_B\tilde b')=\kappa^{-1}\big(\tilde b'\alpha(a_{(0)})\otimes a_{(1)}\big)
=\kappa^{-1}\big(\kappa(\tilde b'\otimes_Ba)\big)=\tilde b'\otimes_Ba ,
\]
so that $\sigma$ is just the flip. Observe that, even when this is the case, $B'\otimes_BA$ may still be noncommutative if either $B'$ or $A$ are such. In fact, since $B'\otimes_BA\cong A'\mathbin{\Box}^{H'}\!H$, the noncommutativity in this case comes only from $H$. In particular, if $H$ is commutative as well, for example if $H=\mathcal{O}(G)$ for some linear algebraic group $G$, then $B'\otimes_BA$ is commutative as well.
\end{rmk}

\begin{rmk}
Let $H'$ be coquasitriangular, with coquasitriangular structure $R:H'\otimes H'\to\Bbbk$, and let $A'$ be braided-commutative, that means
\[
\tilde a' a'=a'_{(0)}\tilde a'_{(0)}
R(a'_{(1)},\tilde a'_{(1)})
\]
for all $a',\tilde a'\in A'$. It follows that elements in $A'^{\,\mathrm{co}H'}$ are central in $A'$, and in particular elements in the image of $\varphi'$ are central. Thus, \eqref{eq:cansigma} becomes:
\[
\sigma(a\otimes_Bb')=\kappa^{-1}\big(\varphi'(b')\alpha(a_{(0)})\otimes a_{(1)}\big)=b'\otimes_Ba ,
\]
and once again $\sigma$ is the flip.
\end{rmk}

Similarly to the case of topological spaces, in the algebraic setting we have the following theorem for horizontal pasting of our Cartesian squares.

\begin{lemma}\label{lemma:conservative2} \
\begin{enumerate}
\item\label{lemma:conservative2A}
If the map $B'\to B''$ makes $B''$ a right-faithful $B'$-module, then the functor
\[
B''\otimes_{B'}(-): {}_{B'}\mathcal{M}^{H} \to {}_{B''}\mathcal{M}^{H}
\]
is conservative.
\item\label{lemma:conservative2B}
If the map $\chi:H\to H'$ is surjective and $H'$ is cosemisimple, then the functor
\[
(-)\mathbin{\Box}^{H'}\!H: {}_{B''}\mathcal{M}^{H'} \to {}_{B''}\mathcal{M}^{H}
\]
is conservative.
\end{enumerate}
\end{lemma}

\begin{proof}
We use the fact that faithful functors reflect monos and epis and therefore they are conservative if the domain is a balanced category, and domains of both our functors are balanced.  Now we prove faithfulness of the two functors.

\medskip

\noindent
\eqref{lemma:conservative2A}
$B''\otimes_{B'}(-)$ is faithful by hypothesis.

\medskip

\noindent
\eqref{lemma:conservative2B}
Since $H'$ is cosemisimple, then $\mathcal{M}^{H'}$ is split abelian (semisimple), and this implies that every $M'\in\mathcal{M}^{H'}$ is injective \cite[pag.~290, Def.]{Swe69}. Thus, in turn, by \cite{Wis96}, $M'\mathbin{\Box}^{H'}\!(-)$ is right exact. Then, if $\chi:H\to H'$ is surjective,
which means that we have an exact sequence in \mbox{${}^{H'}\!\mathcal{M}^{H'}$}:
\[
H\to H'\to 0 ,
\]
this implies that the sequence
\[
M'\mathbin{\Box}^{H'}\!H\xrightarrow{\;\;M'\mathbin{\Box}^{H'}\!\chi\;\;} M'\mathbin{\Box}^{H'}\!H'\longrightarrow 0
\]
is exact as well.

A right adjoint functor is faithful if and only if the counit of the adjunction is componentwise epi. Since $\mathrm{Coind}^H_{H'}=(-)\mathbin{\Box}^{H'}\!H$ is a right adjoint to the coforgetful functor $\mathrm{Cores}^H_{H'}:\mathcal{M}^H\to\mathcal{M}^{H'}$, componentwise the counit of the adjunction reads as the following composition of morphisms in $\mathcal{M}^{H'}$:
\[
M'\mathbin{\Box}^{H'}\!H\xrightarrow{\;\; M'\mathbin{\Box}^{H'}\!\chi \;\;} M\mathbin{\Box}^{H'}\!H'\xrightarrow{\quad\cong\quad} M' ,
\]
which is an epimorphism. By \cite[Lemma 1.2]{Shi82} the functor
$(-)\mathbin{\Box}^{H'}\!H:{}_{B''}\mathcal{M}^{H'}\to {}_{B''}\mathcal{M}^{H}$
is faithful.
\end{proof}

\begin{thm}[2-out-of-3]\label{thm:pastingbis}
Consider the morphisms of Hopf-algebra-invariant algebra maps corresponding to the three vertical faces of the following commutative diagram:\smallskip
\begin{center}
\begin{tikzpicture}[baseline=(current bounding box.center),scale=2.7]

\node (Hpp) at (0,1.9) {$H''$};
\node (Hp) at (1,1.3) {$H'$};
\node (H) at (2,1.9) {$H$};
\node (App) at (0,1) {$A''$};
\node (Ap) at (1,0.4) {$A'$};
\node (A) at (2,1) {$A$};
\node (Bpp) at (0,0) {$B''$};
\node (Bp) at (1,-0.6) {$B'$};
\node (B) at (2,0) {$B$};

\path[-To,font=\footnotesize,inner sep=2pt]
		(H) edge node[below right] {$\chi$} (Hp)
		(Hp) edge (Hpp) 
		(H) edge (Hpp) 
		(A) edge node[below right] {$\alpha$} (Ap)
		(Ap) edge node[below left] {$\alpha'$} (App)
		(A) edge node[below] {$\alpha''$} (App)
		(B) edge node[pos=0.51,white] {\rule{10pt}{10pt}} (Bpp) 
		(B) edge (Bp) 
		(Bp) edge node[below left] {$\beta'$} (Bpp)
		(B) edge node[right] {$\varphi$} (A)
		(Bp) edge node[fill=white, pos=0.4] {$\rule[-2pt]{0pt}{0pt}\;\varphi'$} (Ap)
		(Bpp) edge node[left] {$\varphi''$} (App);

\end{tikzpicture}
\end{center}
Then:
\begin{enumerate}[leftmargin=2em,label=(\arabic*),itemsep=5pt]
\item\label{thm2A}
If the two morphisms in front are Cartesian, then the morphism in the back (their composition) is Cartesian as well.
\item\label{thm2C}
If the left front face and the back face are Cartesian,
and $\beta'$ makes $B''$ a right-faithful $B'$-module, then the right front face is Cartesian as well.
\item\label{thm2B}
If the right front face and the back face are Cartesian,
$H'$ is cosemisimple and $\chi$ is surjective, then the left front face is Cartesian as well.
\end{enumerate}
\end{thm}

\begin{proof}
The generalized canonical maps of the three faces are given by
\begin{align*}
\kappa: & \; B'\otimes_BA\longrightarrow A'\mathbin{\Box}^{H'}\!H , & b'\otimes_{B} a &\longmapsto   \varphi'(b')\alpha(a_{(0)})\otimes a_{(1)}, \\
\kappa': & \; B''\otimes_{B'}A'\longrightarrow A''\mathbin{\Box}^{H''}\!H' , & b''\otimes_{B'} a' &\longmapsto   \varphi''(b'')\alpha'(a'_{(0)})\otimes a'_{(1)}, \\
\kappa'': & \; B''\otimes_BA\longrightarrow A''\mathbin{\Box}^{H''}\!H , & b''\otimes_{B} a &\longmapsto   \varphi''(b'')\alpha''(a_{(0)})\otimes a_{(1)}.
\end{align*}
For starters, we check that the following diagram is commutative\medskip
\begin{center}
\begin{tikzpicture}[baseline=(current bounding box.center),xscale=5,yscale=3.5]

\node (b) at (0,1) {$A''\mathbin{\Box}^{H''}\!H$};
\node (a) at (1,1) {$B''\otimes_BA$};
\node (f) at (2,1) {$B''\otimes_{B'}(B'\otimes_BA)$};
\node (c) at (0,0) {$(A''\mathbin{\Box}^{H''}\!H')\mathbin{\Box}^{H'}\!H$};
\node (d) at (1,0) {$(B''\otimes_{B'}A')\mathbin{\Box}^{H'}\!H$};
\node (e) at (2,0) {$B''\otimes_{B'}(A'\mathbin{\Box}^{H'}\!H)$};

\path[-To,font=\footnotesize]
		(a) edge node[above] {$\kappa''$} (b)
		(b) edge node[left] {$\cong$} (c)
		(d) edge node[above] {$\;\kappa'\mathbin{\Box}^{H'}\!H$} (c)
		(e) edge node[above] {$\cong$} (d)
		(f) edge node[above] {$\cong$} (a)
		(f) edge node[right] {$B''\otimes_{B'}\kappa$} (e);

\end{tikzpicture}\medskip
\end{center}
This is dual to the diagram \eqref{eq:hencecomm}. Similarly to the classical case, to simplify the computations we use this time the explicit inverse of the top right horizontal isomorphism and start from the top middle vertex.
Since all maps are left $B''$-linear, it is enough to do the check on elements
of the form $1_{B''}\otimes_Ba$ with $a\in A$. We have
\begin{center}
\begin{tikzpicture}[baseline=(current bounding box.center),xscale=5.6,yscale=3,font=\footnotesize]

\node (a) at (1.1,1) {$1_{B''}\otimes_{B} a$};
\node (b) at (0,1) {$\alpha''(a_{(0)})\otimes a_{(1)}$};
\node[rectangle,draw] (c) at (0,0.3) {$\alpha''(a_{(0)})\otimes \chi(a_{(1)(1)})\otimes a_{(1)(2)}$};
\node[rectangle,draw] (cp) at (0,0) {$\alpha'\big(\alpha(a_{(0)})_{(0)}\big)\otimes \alpha(a_{(0)})_{(1)}\otimes a_{(1)}$};
\node (d) at (1.1,0) {$\big(1_{B''}\otimes_{B'}\alpha(a_{(0)})\big)\otimes a_{(1)}$};
\node (e) at (2,0) {$1_{B''}\otimes_{B'}(\alpha(a_{(0)})\otimes a_{(1)})$};
\node (f) at (2,1) {$1_{B''}\otimes_{B'} (1_{B'}\otimes_{B} a)$};

\path[|-To,font=\scriptsize]
		(a) edge node[above] {$\kappa''$} (b)
		(b) edge[shorten >=2pt] node[left] {$\cong$} (c)
		(d) edge[shorten >=2pt] node[above] {$\;\;\kappa'\mathbin{\Box}^{H'}\!H$} (cp)
		(e) edge node[above] {$\cong$} (d)
		(a) edge node[above] {$\cong$} (f)
		(f) edge node[right] {$B''\otimes_{B'}\kappa$} (e);

\end{tikzpicture}\medskip
\end{center}
The two framed elements are equal because of the $\chi$-colinearity of $\alpha$, the coaction axioms and the equality $\alpha''=\alpha'\circ\alpha$.

From the commutativity of the above diagram we deduce that the three maps
$\kappa''$,
$B''\otimes_{B'}\kappa$
and
$\kappa'\mathbin{\Box}^{H'}\!H$
satisfy the following 2-out-of-3 property: if two are isomorphisms, the remaining one is an isomorphism as well.
Since $(-)\mathbin{\Box}^{H'}\!H$ and $B''\otimes_{B'}(-)$ are functors, we get \ref{thm2A}.
In order to prove \ref{thm2C} and \ref{thm2B}, we need to show that under the hypotheses of the theorem, the functors $B''\otimes_{B'}(-)$ and $(-)\mathbin{\Box}^{H'}\!H$ are conservative.
This follows from Lemma \ref{lemma:conservative2}.
\end{proof}

\begin{df}\label{df:412}
Consider the following two commutative squares:
\begin{center}
\begin{tikzpicture}[baseline={([yshift=-4pt]current bounding box.center)},scale=1.5]

\node (Hp) at (0,2.3) {$H'$};
\node (H) at (2,2.3) {$H$};
\node (Ap) at (0,1.5) {$A'$};
\node (A) at (2,1.5) {$A'\smash{\mathbin{\Box}^{H'}}\!H$};
\node (Bp) at (0,0) {$B'$};
\node (B) at (2,0) {$B'$};

\path[-To,font=\footnotesize]
		(B) edge node[below] {$=$} (Bp)
		(B) edge node[right] {$\varphi'\smash{\mathbin{\Box}^{H'}}\!H$} (A)
		(Bp) edge node[left] {$\varphi'$} (Ap)
		(A) edge node[above] {$A'\smash{\mathbin{\Box}^{H'}}\!\varepsilon$} (Ap)
		(H) edge node[above] {$\chi$} (Hp);
\end{tikzpicture}
\hspace{1.5cm}
\begin{tikzpicture}[baseline=(current bounding box.center),scale=1.5]

\node (Hp) at (0,2.3) {$H$};
\node (H) at (2,2.3) {$H$};
\node (Ap) at (0,1.5) {$B'\otimes_{\smash{B}}A$};
\node (A) at (2,1.5) {$A$};
\node (Bp) at (0,0) {$B'$};
\node (B) at (2,0) {$B$};

\path[-To,font=\footnotesize]
		(B) edge node[below] {$\beta$} (Bp)
		(B) edge node[right] {$\varphi$} (A)
		(Bp) edge node[left] {$B'\otimes_{\smash{B}}1_A$} (Ap)
		(A) edge node[above] {$1_{B'}\otimes_{\smash{B}}A$} (Ap)
		(H) edge node[above] {$=$} (Hp);
\end{tikzpicture}
\end{center}
Both are Cartesian (up to natural identifications, in both cases the generalized canonical map is the identity). The one the left will be called a \emph{fiber change}, and the one on the right a \emph{base change}.
\end{df}

\begin{prop}\label{prop:413}
If the square \eqref{eq:diagaction} is Cartesian, then it decomposes into a fiber change and the base change as follows:
\begin{center}
\begin{tikzpicture}[baseline=(current bounding box.center),scale=3]

\node (Hpp) at (0,1.9) {$H'$};
\node (Hp) at (1,1.3) {$H$};
\node (H) at (2,1.9) {$H$};
\node (App) at (0,1) {$A'$};
\node (Ap) at (1,0.4) {$A'\mathbin{\Box}^{H'}\!H\cong B'\otimes_{B}A$};
\node (A) at (2,1) {$A$};
\node (Bpp) at (0,0) {$B'$};
\node (Bp) at (1,-0.6) {$B'$};
\node (B) at (2,0) {$B$};

\path[-To,font=\footnotesize,inner sep=2pt]
		(H) edge node[below right] {$=$} (Hp)
		(Hp) edge node[below left] {$\chi$} (Hpp)
		(H) edge node[below] {$\chi$} (Hpp)
		(A) edge (Ap)
		(Ap) edge (App)
		(A) edge node[below] {$\alpha$} (App)
		(B) edge node[pos=0.51,white] {\rule{10pt}{10pt}} (Bpp) 
		(B) edge node[below right] {$\beta$} (Bp)
		(Bp) edge node[below left] {$=$} (Bpp)
		(B) edge node[right] {$\varphi$} (A)
		(Bp) edge (Ap)
		(Bpp) edge node[left] {$\varphi'$} (App);

\end{tikzpicture}
\end{center}
\end{prop}
\begin{proof}
This is an immediate consequence of the decomposition \eqref{eq:diagram326} in view of the definitions \ref{df:46} and \ref{df:412}, and the fact that the middle vertex in both isomorphic realizations of the left and the right Cartesian squares is determined by the Cartesian property uniquely up to a unique isomorphism.
\end{proof}

\begin{df}\label{df:lrclasses}
A Cartesian square as in \eqref{eq:diagaction} is said to belong to the \emph{left class} if $\beta$ makes $B'$ a right-faithful $B$-module, and is said to belong to the \emph{right class} if $H'$ is cosemisimple and $\chi$ is surjective.
\end{df}

\begin{cor}\label{cor:algebras}
The opposite category of Hopf-comodule algebras, with basic objects, Cartesian squares, left class, right class, fiber and base change, as defined above satisfy the axioms in Subsection \ref{sec:21}.
\end{cor}

\begin{proof}
The 2-out-of-3 property is Theorem \ref{thm:pastingbis}. The decomposition property is Prop.~\ref{prop:413}.
All the other properties are obvious from the definitions \ref{df:412} and \ref{df:lrclasses}.
\end{proof}

Several important notions of Hopf--Galois theory can be rewritten in terms of Cartesian morphisms of Hopf-algebra-invariant algebra maps, as shown in the following examples.

\begin{example}[\textbf{\textit{Pushforward of an Hopf-algebra-invariant algebra map}}]
Consider a morphism of Hopf-algebra-invariant algebra maps
\begin{equation}\label{eq:qdiagaction}
\begin{tikzpicture}[baseline=(current bounding box.center)]

\node (Hp) at (0,2.3) {$H$};
\node (H) at (2,2.3) {$H$};
\node (Ap) at (0,1.5) {$A'$};
\node (A) at (2,1.5) {$A$};
\node (Bp) at (0,0) {$B'$};
\node (B) at (2,0) {$B$};

\path[-To,font=\footnotesize]
		(H) edge node[above] {$=$} (Hp)
		(B) edge node[below] {$\beta$} (Bp)
		(B) edge node[right] {$\varphi$} (A)
		(A) edge node[above] {$\alpha$} (Ap)
		(Bp) edge node[left] {$\varphi'$} (Ap);

\end{tikzpicture}\vspace{-5pt}
\end{equation}
Then, the generalized canonical map
\[
B'\otimes_{B}A\rightarrow A'\mathbin{\Box}^{H}\!H, \qquad
b'\otimes_{B} a\mapsto\varphi'(b')\alpha(a_{(0)})\otimes a_{(1)} ,
\]
under the natural identification $A'\cong A'\mathbin{\Box}^{H}\!H$ reads as
\[
B'\otimes_{B}A\rightarrow A', \qquad
b'\otimes_{B} a\mapsto\varphi'(b')\alpha(a) .
\]
Thus, if \eqref{eq:qdiagaction} is Cartesian, the inverse of the generalized canonical map becomes nothing but an equivariant \emph{factorization map} $\vartheta:A'\to B'\otimes_{B}A$, and our map $\sigma$ in \eqref{eq:cansigma} becomes the canonical algebra-factorization structure associated to $\vartheta$,
\[
\sigma(a\otimes_B b')=\vartheta\big(\alpha(a)\varphi'(b')\big) .
\]
In particular, by Remark \ref{rmk:flip}, in the commutative case $\sigma$ is simply the flip.
\end{example}

\begin{example}[\textbf{\textit{Invariants of a coaction}}]\label{ex:algA}
Consider a morphism from the trivial $\Bbbk$-extension \mbox{$(\Bbbk,\Bbbk\leftarrow\Bbbk)$} to an arbitrary $H$-extension $(H,A\leftarrow B)$:
\begin{equation}\label{eq:ncexample2}
\begin{tikzpicture}[baseline=(current bounding box.center)]

\node (Hp) at (0,2.5) {$H$};
\node (H) at (2,2.5) {$\Bbbk$};
\node (Ap) at (0,1.5) {$A$};
\node (A) at (2,1.5) {$\Bbbk$};
\node (Bp) at (0,0) {$B$};
\node (B) at (2,0) {$\Bbbk$};

\path[-To,font=\footnotesize]
		(B) edge node[above] {$\beta$} (Bp)
		(B) edge node[right] {$=$} (A)
		(H) edge (Hp)
		(A) edge node[above] {$\alpha$} (Ap)
		(Bp) edge node[left] {$\varphi$} (Ap);

\end{tikzpicture}
\end{equation}
The generalized canonical map reads as
\[
B=B\otimes_{\Bbbk}\Bbbk\longrightarrow A\mathbin{\Box}^{H}\!\Bbbk=A^{\mathrm{co}\,H} ,
\]
Thus, the square \eqref{eq:ncexample2} is Cartesian if and only if $\varphi:B\to A$ is an embedding onto $A^{\mathrm{co}\,H}\subset A$. The canonical algebra-factorization structure in this case is the flip.
\end{example}

\begin{example}[\textbf{\textit{Noncommutative slices}}]\label{ex:ncslice}
Consider a diagram
\begin{equation}\label{eq:squarenumber9q}
\begin{tikzpicture}[baseline=(current bounding box.center),>=To]

\node (Hp) at (0,2.5) {$H'$};
\node (H) at (2,2.5) {$H$};
\node (Ap) at (0,1.5) {$A'$};
\node (A) at (2,1.5) {$A$};
\node (Bp) at (0,0) {$B$};
\node (B) at (2,0) {$B$};

\path[->,font=\footnotesize]
		(B) edge node[above] {$=$} (Bp)
		(B) edge node[right] {$\varphi$} (A)
		(H) edge[->>] node[above] {$\chi$} (Hp)
		(A) edge[->>] node[above] {$\alpha$} (Ap)
		(Bp) edge node[left] {$\varphi'$} (Ap);

\end{tikzpicture}
\end{equation}
where $B=A^{\mathrm{co}H}$ and the horizontal arrows are surjective.
Then, under the canonical identification $B\otimes_{B}A\cong A$, the generalized canonical map reads as
\begin{equation}\label{eq:310bis}
A\longrightarrow A'\mathbin{\Box}^{H'}\!H, \qquad
a\mapsto   \alpha(a_{(0)})\otimes a_{(1)} .
\end{equation}
The square \eqref{eq:squarenumber9q} is Cartesian if and only if the map \eqref{eq:310bis} is an isomorphism.
Motivated by the classical Example \ref{ex:3.3b},
we will call this a (noncommutative) \emph{$H'$-slice} of the $H$-coaction on $A$.
\end{example}

\begin{example}[\textbf{\textit{Change of base}}]\label{ex:cb}
Consider the following morphism between Hopf-algebra-invariant algebra maps
\begin{equation}\label{eq:number}
\begin{tikzpicture}[baseline=(current bounding box.center)]

\node (Hp) at (0,2.5) {$H$};
\node (H) at (2,2.5) {$H$};
\node (Ap) at (0,1.5) {$A'$};
\node (A) at (2,1.5) {$A$};
\node (Bp) at (0,0) {$B'$};
\node (B) at (2,0) {$B$};

\path[-To,font=\footnotesize]
		(B) edge node[below] {$\beta$} (Bp)
		(B) edge node[right] {$\varphi$} (A)
		(Bp) edge node[left] {$\varphi'$} (Ap)
		(A) edge node[above] {$\alpha$} (Ap)
		(H) edge node[above] {$=$} (Hp);
\end{tikzpicture}
\end{equation}
Then, under the canonical identification $A'\mathbin{\Box}^{H}\!H\cong A'$, the generalized canonical map reads as
\begin{equation}\label{eq:chbscan}
B'\otimes_{B}A\rightarrow A', \qquad
b'\otimes_{B} a\mapsto   b'\alpha(a) .
\end{equation}
Note that the Cartesian property of \eqref{eq:number} is equivalent to being a change of base under the explicit canonical algebra-factorization structure as in \eqref{eq:cansigma}.
\end{example}

\begin{example}[\textbf{\textit{Change of the structure Hopf algebra}}]\label{ex:algD}
Consider the following morphism between Hopf-algebra-invariant algebra maps
\begin{equation}\label{eq:ncexample5}
\begin{tikzpicture}[baseline=(current bounding box.center)]

\node (Hp) at (0,2.5) {$H'$};
\node (H) at (2,2.5) {$H$};
\node (Ap) at (0,1.5) {$A'$};
\node (A) at (2,1.5) {$A$};
\node (Bp) at (0,0) {$B$};
\node (B) at (2,0) {$B$};

\path[-To,font=\footnotesize]
		(B) edge node[below] {$=$} (Bp)
		(B) edge node[right] {$\varphi$} (A)
		(Bp) edge node[left] {$\varphi'$} (Ap)
		(A) edge node[above] {$\alpha$} (Ap)
		(H) edge node[above] {$\chi$} (Hp);
\end{tikzpicture}
\end{equation}
Assuming that the square \eqref{eq:ncexample5} is Cartesian, we can explicitly compute the canonical algebra-factorization structure. One has
\[
\kappa:B\otimes_BA\longrightarrow A'\mathbin{\Box}^{H'}\!H , \qquad
b\otimes a\mapsto \varphi'(b)\alpha(a_{(0)})\otimes a_{(1)} ,
\]
and then
\begin{align*}
\sigma(a\otimes_B b)
&=\kappa^{-1}\big(\alpha(a_{(0)})\varphi'(b)\otimes a_{(1)}\big) \\
&=\kappa^{-1}\big(\alpha(a_{(0)}\varphi(b))\otimes a_{(1)}\big) \\
&=\kappa^{-1}\big(\alpha(a_{(0)}\varphi(b)_{(0)})\otimes a_{(1)}\varphi(b)_{(1)}\big) \\
&=\kappa^{-1}\big(\alpha((a\varphi(b))_{(0)})\otimes (a\varphi(b))_{(1)}\big) \\
&=\kappa^{-1}\big(\alpha(a\varphi(b))_{(0)}\otimes (a\varphi(b))_{(1)}\big) \\
&=\kappa^{-1}\big(\kappa(1\otimes_B a\varphi(b)\big)=1\otimes_B a\varphi(b).
\end{align*}
Observe that if $A$ is commutative, then $\sigma$ is just the flip.

Under the identification $A\cong B\otimes_BA$, the generalized canonical map reads as
\[
A\longrightarrow A'\mathbin{\Box}^{H'}\!H , \qquad
a\mapsto \alpha(a_{(0)})\otimes a_{(1)} .
\]
and \eqref{eq:ncexample5} is Cartesian if and only if $\alpha$ is a change of structure Hopf-algebra.

Note that the Example \ref{ex:ncslice} of noncommutative slice is a special case of change of structure Hopf-algebra where $\alpha$ and $\chi$ are surjective.
\end{example}

\begin{example}[\textbf{\textit{Hopf--Galois condition}}]\label{ex:algC}
Having the subalgebra $B\subset A$ of invariants for the \mbox{$H$-coaction} defined as in Example \ref{ex:algA}, consider now a morphism of $H$-extensions:
\begin{equation}\label{eq:ncexample4}
\begin{tikzpicture}[baseline=(current bounding box.center)]

\node (Hp) at (0,2.5) {$H$};
\node (H) at (2,2.5) {$H$};
\node (Ap) at (0,1.5) {$A\otimes H$};
\node (A) at (2,1.5) {$A$};
\node (Bp) at (0,0) {$A$};
\node (B) at (2,0) {$B$};

\path[-To,font=\footnotesize]
		(B) edge (Bp)
		(B) edge (A)
		(Bp) edge node[left] {$A\otimes 1_{H}$} (Ap)
		(A) edge node[above] {$\alpha$} (Ap)
		(H) edge node[above] {$=$} (Hp);
\end{tikzpicture}
\end{equation}
Here, $\alpha:A\to A\otimes H$ is the coaction, and the coaction of $H$ on $A\otimes H$ comes only from the second factor.
Then, our generalized canonical map
\begin{equation}\label{eq:DurB}
\kappa:A\otimes_BA\longrightarrow (A\otimes H)\mathbin{\Box}^{H}\!H , \qquad
\tilde a\otimes_B a\mapsto \tilde a a_{(0)}\otimes  a_{(1)}\otimes  a_{(2)}
\end{equation}
becomes the usual canonical map of Hopf--Galois theory after composing it with the inverse of the isomorphism $A\otimes H
\xrightarrow{\,\cong\,} (A\otimes H)\mathbin{\Box}^{H}\!H$, $a\otimes h\mapsto a\otimes h_{(1)}\otimes h_{(2)}$, given by the application of the counit to the right leg of the cotensor product.
The condition of \eqref{eq:ncexample4} being Cartesian is then equivalent to the Hopf--Galois condition for the $H$-extension $B\subset A$.

The canonical algebra-factorization structure in this case is given by the formula
\begin{equation}\label{eq:Dur}
\sigma(a\otimes_B\tilde a) =\kappa^{-1}\big(a_{(0)}\tilde a\otimes a_{(1)}\otimes a_{(2)}\big) ,
\end{equation}
and by \eqref{eq:DurB} clearly reduces to the flip if $A$ is commutative.
\end{example}

\begin{rmk}
One can check that the canonical algebra-factorization structure \eqref{eq:Dur} of Example \ref{ex:algC}
is exactly the braiding studied by {\DJ}ur{\dj}evi{\'c} in \cite{Dur96}.
If further $A=H$, hence $B=A^{\text{co}H}=\Bbbk$, the generalized canonical map becomes
\[
\kappa:H\otimes H\to (H\otimes H)\mathbin{\Box}^{H}\!H, \qquad
\tilde h\otimes h\mapsto \tilde h h_{(1)}\otimes  h_{(2)}\otimes  a_{(3)} .
\]
Using the antipode $S$ of $H$ and the fact that the map $H\to H\mathbin{\Box}^HH$, $h\mapsto h_{(1)}\otimes  h_{(2)}$, is an isomorphism of right $H$-comodules, we can write down explicitly
\[
\kappa^{-1}:(H\otimes H)\mathbin{\Box}^{H}\!H\to H\otimes H, \qquad
\widetilde{h}\otimes h_{(1)}\otimes h_{(2)}\mapsto  \widetilde{h}S(h_{(1)})\otimes h_{(2)} .
\]
The canonical algebra-factorization structure then becomes
\[
\sigma:H\otimes H\to H\otimes H , \qquad
h\otimes \widetilde{h} \mapsto h_{(1)}\widetilde{h}S(h_{(2)})\otimes  h_{(3)} .
\]
As noted for example in \cite{DHHW14}, this is a special case of Yetter-Drinfeld braiding $V\otimes W\to W\otimes V$, for the Yetter-Drinfeld modules $V=W=H$,
where we think of $H$ as a left-left Yetter-Drinfeld module over itself, with left adjoint action and left coaction given by the comultiplication.
\end{rmk}

\begin{example}[\textbf{\textit{Noncommutative global sections}}]\label{ex:algB}
For the purpose of the present example, in the following we strenghten the framework of cleft extensions to a multiplicative setting.
A Hopf-algebra-invariant algebra map $(H,A\leftarrow B)$ has the \emph{normal basis property} if there exists a morphism $j:H\to A$ of $H$-comodules such that
the map $B\otimes H\to A$, $b\otimes h\mapsto\varphi(b)j(h)$, is an isomorphism of
left $B$-modules right $H$-comodules (this is equivalent to Def.~3.4 in \cite{Mon09}).
Recall that, if $B\subset A$ is an $H$-extension, a \emph{cleaving map} is a convolution-invertible left $B$-module right $H$-comodule map $j:H\to A$ such that $\gamma(1)=1$ \cite{DT86}. If there exists a cleaving map, then one says that the extension is \emph{cleft}.

To relate these two properties, we need to recall one more definition. Let $H$ be a Hopf algebra, $B$ an algebra, $\triangleright:H\otimes B\to B$ a measuring, and $c:H\otimes H\to B$ a linear map, satisfying the normalized Hopf $2$-cocycle and Hopf twisted module condition (the two conditions in \cite[Lemma 10]{DT86}, see also \cite[Example 3.6]{Mon09}). We call the pair $(\triangleright,c)$ a \emph{twisted action} of $H$ on $B$.
Then, one defines an associative unital algebra $B\#_{(\triangleright,c)}H$ to be $B\otimes H$ as a vector space, with multiplication
\[
(b\otimes h)(\tilde b\otimes\tilde h):=b(h_{(1)}\triangleright \tilde b)c(h_{(2)}\otimes\tilde h_{(1)})\otimes h_{(3)}\tilde h_{(2)}.
\]
One calls $B\#_{(\triangleright,c)}H$ a \emph{twisted crossed product}. For the trivial cocycle $c=\varepsilon\otimes\varepsilon$, the notion of twisted action reduces to the one of Hopf algebra \emph{action} on an algebra, and the notion of twisted crossed product reduces to the one of \emph{crossed product}, denoted as $B\#_{\triangleright}H$. Observe that, in the latter case,
\[
\sigma(h\otimes b)=h_{(1)}\triangleright b\otimes h_{(2)}
\]
defines an algebra factorization structure such that the product of $B\#_{\triangleright} H$ coincides with the product \eqref{eq:sigmaA}.

\begin{thm}[\protect{\cite{DT86},\cite[Thm.~3.8]{Mon09}}]\label{thm:consequence}
Let $B\subset A$ be an $H$-extension. Then, the following are equivalent:
\begin{enumerate}
\item $B\subset A$ is $H$-Galois and has the normal basis property.
\item there exists a twisted action $(\triangleright,c)$ of $H$ on $B$ such that $c$ is convolution-invertible and the $H$-comodule algebra $A$ is isomorphic to $B\#_{(\triangleright,c)}H$.
\item The extension $B\subset A$ is cleft.
\end{enumerate}
\end{thm}

There is, in fact, an explicit formula for $\triangleright$ and $c$ in terms of the cleaving map $j$ \cite[Prop.~7.2.3]{Mon93}:
\begin{subequations}\label{eq:tact}
\begin{align}
h\triangleright b &=j(h_{(1)})b\hspace{1pt}i(h_{(2)}) \label{eq:tactA} \\
c(h\otimes \tilde h) &=j(h_{(1)})j(\tilde h_{(1)})\hspace{1pt}i(h_{(2)}\tilde h_{(2)})
\label{eq:tactB}
\end{align}
\end{subequations}
where $i$ denotes the convolution-inverse of $j$. There is also an explicit formula for the cleaving map in terms of the isomorphism $\boldsymbol{j}:B\#_{(\triangleright,c)}H\to A$ at point (2), given by
\begin{equation}\label{eq:jphi}
j(h)=\boldsymbol{j}(1\otimes h) .
\end{equation}

We shall use the following stronger versions of normal basis and cleftness conditions demanding $j$ to be multiplicative. The thus strengthened notion of normal basis becomes the notion of \emph{noncommutative global section} as follows.

A Hopf-algebra-invariant algebra map $(H,A\leftarrow B)$ is said to admit a \emph{noncommutative global section} if there exists a morphism $j:H\to A$ of $H$-comodule algebras such that
the map $B\otimes H\to A$, $b\otimes h\mapsto\varphi(b)j(h)$, is an isomorphism of
left $B$-modules right $H$-comodules.
An $H$-comodule algebra $A$ is called a \emph{multiplicatively cleft extension} (of $A^{\mathrm{co}H}$) if there exists a convolution-invertible right
$H$-comodule algebra map $j : H \to A$. The map $j$ is called a \emph{multiplicative cleaving map}.

In this setting, Theorem \ref{thm:consequence} becomes the following one.

\begin{thm}\label{thm:321}
Let $(H,A\leftarrow B)$ be a Hopf-algebra-invariant algebra map. Then, the following are equivalent:
\begin{enumerate}[label=({\arabic*}$\,'\!$)]
\item $(H,A\leftarrow B)$ is a Hopf--Galois extension and admits a noncommutative global section.

\item There exists an Hopf algebra action $\triangleright$ of $H$ on $B$ such that the $H$-comodule algebra $A$ is isomorphic to $B\#_{\triangleright} H$.

\item $(H,A\leftarrow B)$ is multiplicatively cleft extension.
\end{enumerate}
\end{thm}

\begin{proof}
\mbox{(1$'$)$\Leftrightarrow$(3$'$).} This is just the equivalence \mbox{(1)$\Leftrightarrow$(3)} of  Theorem \ref{thm:consequence} under the additional assumption that $j$ is an algebra morphism.

\medskip

\noindent
\mbox{(3$'$)$\Rightarrow$(2$'$).}
If (3$'$) holds, then $A\cong B\#_{(\triangleright,c)}H$ with twisted action given by
\eqref{eq:tact}. Since now by assumption the cleaving map $j$ is multiplicative, from
\eqref{eq:tactB} we get
\[
c(h\otimes\tilde h)=
j(h_{(1)}\tilde h_{(1)})\hspace{1pt}i(h_{(2)}\tilde h_{(2)})
=j\big((h\tilde h)_{(1)}\big)\hspace{1pt}i\big((h\tilde h)_{(2)}\big)
=\varepsilon(h\tilde h) =\varepsilon(h)\varepsilon(\tilde h)=
(\varepsilon\otimes\varepsilon)(h\otimes\tilde h) ,
\]
since $i$ is the convolution inverse of $j$.
Thus $c=\varepsilon\otimes\varepsilon$ is trivial and $\triangleright$ is a Hopf algebra action, and we get (2$'$).
\noeqref{eq:tactA}

\medskip

\noindent
\mbox{(2$'$)$\Rightarrow$(3$'$).} Assume that (2$'$) holds, i.e.~there is an isomorphism $\boldsymbol{j}:B\#_{\triangleright}H\to A$ of $H$-comodule algebras.
From Theorem \ref{thm:consequence}, in the special case of trivial $c$, we deduce that there exists a cleaving map $j:H\to A$. Since now $\boldsymbol{j}$ is a morphism of algebras, we see from \eqref{eq:jphi} that $j$ is a morphism of algebras as well.
\end{proof}

To express now a noncommutative global section of a noncommutative principle bundle by the Cartesian property, assume now that, in diagram \eqref{eq:squarenumber9q}, $\varphi$ is an $H$-Galois extension and $H'$ is trivial:
\begin{equation}\label{eq:squarenumber10b}
\begin{tikzpicture}[baseline=(current bounding box.center)]

\node (Hp) at (0,2.5) {$\Bbbk$};
\node (H) at (2,2.5) {$H$};
\node (Ap) at (0,1.5) {$A'$};
\node (A) at (2,1.5) {$A$};
\node (Bp) at (0,0) {$B$};
\node (B) at (2,0) {$B$};

\path[->,font=\footnotesize,>=To]
		(B) edge node[above] {$=$} (Bp)
		(B) edge node[right] {$\varphi$} (A)
		(H) edge node[above] {$\varepsilon$} (Hp)
		(A) edge node[above] {$\alpha$} (Ap)
		(Bp) edge node[left] {$\varphi'$} (Ap);

\end{tikzpicture}
\end{equation}
Then, the notion of noncommutative slice of Example \ref{ex:ncslice} specifies to the notion of section of a noncommutative free action, as we show below.
The diagram \eqref{eq:squarenumber10b} is Cartesian if and only if the $H$-equivariant map \eqref{eq:310bis}, which now reads as
\[
A\longrightarrow A'\otimes H, \qquad
a\mapsto   \alpha(a_{(0)})\otimes a_{(1)}
\]
is invertible. This means that it induces an isomorphism of algebras of $H$-coaction invariants
\[
\setlength{\arraycolsep}{3pt}
\begin{array}{rlclcl}
B &\stackrel{\cong}{\longrightarrow}
& A^{\mathrm{co}H}
&\longrightarrow
&(A'\otimes H)^{\mathrm{co}H}
&\stackrel{\cong}{\longrightarrow}A' , \\[2pt]
b &\longmapsto & \varphi(b)
&\longmapsto & \alpha\big(\varphi(b)\big)\otimes 1
&\longmapsto \alpha\big(\varphi(b)\big) .
\end{array}
\]
Therefore, the map $\varphi'=\alpha\circ\varphi:B\to A'$ being the composition of these three isomorphisms is an isomorphism as well. 

Notice that in this case $\alpha$ is surjective, being the composition of the generalized canonical map with the counit of $H$.
Composing $\varphi'^{-1}$ with the quotient map $\alpha:A\to A'$ we obtain a right inverse $A\to B$ of the map $\varphi:B\to A$, which should be understood dually as a noncommutative global section.

\medskip

Similarly to the geometric case, a second point of view is that of a section as an equivariant map, which now dualizes to a noncommutative $H$-equivariant map $j:H\to A$.
This can be understood using our notion of Cartesian diagram as well.

\medskip

To this end, 
consider the diagram
\begin{equation}\label{eq:ncexample3}
\begin{tikzpicture}[baseline=(current bounding box.center)]

\node (Hp) at (0,2.5) {$H$};
\node (H) at (2,2.5) {$H$};
\node (Ap) at (0,1.5) {$A$};
\node (A) at (2,1.5) {$H$};
\node (Bp) at (0,0) {$B$};
\node (B) at (2,0) {$\Bbbk$};

\path[-To,font=\footnotesize]
		(B) edge (Bp)
		(B) edge  (A)
		(H) edge node[above] {$=$} (Hp)
		(A) edge node[above] {$j$} (Ap)
		(Bp) edge node[left] {$\varphi$} (Ap);

\end{tikzpicture}
\end{equation}
where the coaction of $H$ on itself is given by the comultiplication.
The generalized canonical map
\[
\kappa:B\otimes H \longrightarrow
A\mathbin{\Box}^H H \;,\qquad
b\otimes h\longmapsto 
\varphi(b)j(h_{(1)})\otimes h_{(2)} ,
\]
composed with the canonical isomorphism $A\mathbin{\Box}^H H\to A$, reads as
\begin{equation}\label{eq:mc}
\boldsymbol{j}:B\otimes H \longrightarrow A ,\qquad\quad
b\otimes h\longmapsto \boldsymbol{j}(b\otimes h):=\varphi(b)j(h) .
\end{equation}
The diagram \eqref{eq:ncexample3} is then Cartesian if and only if $\boldsymbol{j}$ is invertible, i.e.~$j$ is a noncommutative $H$-equivariant map. By Theorem \ref{thm:321}, this is equivalent to $(H,A\leftarrow B)$ being a multiplicatively cleft extension.

\medskip

The well-known equivalence of the two viewpoints above can be derived in our language from Theorem \ref{thm:pastingbis}
by looking at the following diagram
\begin{equation}\label{eq:squarenumber341}
\begin{tikzpicture}[baseline=(current bounding box.center),scale=2.7]

\node (Hpp) at (0,1.9) {$\Bbbk$};
\node (Hp) at (1,1.3) {$H$};
\node (H) at (2,1.9) {$H$};
\node (App) at (0,1) {$A'$};
\node (Ap) at (1,0.4) {$A$};
\node (A) at (2,1) {$H$};
\node (Bpp) at (0,0) {$B$};
\node (Bp) at (1,-0.6) {$B$};
\node (B) at (2,0) {$\Bbbk$};

\path[-To,font=\footnotesize,inner sep=2pt]
		(H) edge node[below=2pt,sloped] {$=$} (Hp)
		(Hp) edge node[below=2pt] {$\varepsilon$} (Hpp)
		(H) edge node[above] {$\varepsilon$} (Hpp)
		(A) edge node[below right] {$j$} (Ap)
		(Ap) edge (App)
		(A) edge (App)
		(B) edge (Bp)
		(Bp) edge node[below=2pt,sloped] {$=$} (Bpp)
		(B) edge node[pos=0.49,white] {\rule{10pt}{10pt}} (Bpp) 
		(B) edge (A)
		(Bp) edge node[right,pos=0.35] {$\varphi$} (Ap)
		(Bpp) edge node[left] {$\varphi'$} (App);

\end{tikzpicture}
\end{equation}
The generalized canonical map of the back face $B\otimes_{\Bbbk}H\to A'\mathbin{\Box}^{\Bbbk}H$ reads as $\varphi'\otimes H: B\otimes H\to A'\otimes H$, and the induced map of $H$-coaction invariants is $\varphi':B\to A'$. Thus, the back face is Cartesian if and only if $\varphi'$ is an isomorphism.

Under this assumption, from Theorem \ref{thm:pastingbis} we deduce that the left face in \eqref{eq:squarenumber11} is Cartesian (existence of a noncommutative global section) if and only if the right face is Cartesian (multiplicative cleftness).
\end{example}

\begin{rmk}
In the commutative case, cleftness allows to trivialize vector bundles associated to a free action. Multiplicative cleftness plays a similar role in the noncommutative framework, and it allows to trivialize associated noncommutative vector bundles (finitely generated projective modules) in a way compatible that is compatible with the balanced tensor product.
This point will be discussed in detail in \cite{DMpart2}.
\end{rmk}

\begin{example}[\textbf{\textit{General morphism of Hopf--Galois extensions}}]
Similarly to Example \ref{ex:3.3e}, we have the following example, that we phrase in the form of a theorem.
\begin{thm}\label{thm:5}
Every morphism \eqref{eq:diagaction} between Hopf--Galois extensions is Cartesian.
\end{thm}

\begin{proof}
Assume that the vertical arrows in \eqref{eq:diagaction} are Hopf--Galois extensions.
It follows from Prop.~2.5.7 in \cite{Sch03} that $B'\to A'':=A'\mathbin{\Box}^{H'}\!H$ is an $H$-Galois extension as well, hence its canonical map $\mathsf{can}''$ is invertible. The canonical map $\mathsf{can}$ of the extension $\varphi:B\to A$ is, by hypothesis, invertible as well. We view $A''$ as an object in ${}_{B'}\mathcal{M}^{H}_A$ as described in Lemma \ref{lemma310}. So, we have a commutative diagram in ${}_{A''}\mathcal{M}^{H}_A$:
\begin{equation}\label{eq:sixdiagram}
\begin{tikzpicture}[xscale=6,yscale=2,baseline=(current bounding box.center)]

\node (d) at (1,0) {$A''\otimes H$};
\node(e) at (1,1) {$A''\otimes_{B'} A''$};
\node (c) at (0,1) {$A''\otimes_{B'} (B'\otimes_BA)$};
\node (a) at (0,0) {$A''\otimes_BA$};
\node (b) at (0,-1) {$A''\otimes_A(A\otimes_BA)$};
\node (f) at (1,-1) {$A''\otimes_A(A\otimes H)$};

\path[-To,font=\footnotesize]
		(c) edge node[left] {$\cong$} (a)
		(a) edge node[left] {$\cong$} (b)
		(b) edge node[below] {$A''\otimes_A\mathsf{can}$} (f)
		(c) edge node[above] {$A''\otimes_{B'}\kappa$} (e)
		(e) edge node[right] {$\mathsf{can}''$} (d)
		(d) edge node[right] {$\cong$} (f);

\end{tikzpicture}
\end{equation}
Here all arrows with $\cong$ are obvious isomorphisms.
Note that $\mathsf{can}''$ is right $A$-linear, since it is right $A''$-linear, and the right $A$-action on $A''$ comes from the algebra map 
\eqref{eq:42iii}.
Since all the maps in the previous diagram are $(A'',A)$-bimodule maps, it is enough to check its commutativity on elements of the form $1_{A''}\otimes_{B'}(b'\otimes_B1_A)\in A''\otimes_{B'} (B'\otimes_BA)$.
We have
\begin{center}
\begin{tikzpicture}[xscale=8,yscale=2.5]

\node (d) at (1,0) {$\big(\varphi'(b')\otimes 1_H\big)\otimes 1_H$};
\node(e) at (1,1) {$1_{A''}\otimes_{B'}\big(\varphi'(b')\otimes 1_H\big)$};
\node (c) at (0,1) {$1_{A''}\otimes_{B'}(b'\otimes_B1_A)$};
\node (a) at (0,0) {$\big(\varphi'(b')\otimes 1_H\big)\otimes_B1_A$};
\node (b) at (0,-1) {$\big(\varphi'(b')\otimes 1_H\big)\otimes_A(1_A\otimes_B1_A)$};
\node (f) at (1,-1) {$\big(\varphi'(b')\otimes 1_H\big)\otimes_A(1_A\otimes 1_H)$};

\path[|-To,font=\footnotesize]
		(c) edge node[left] {$\cong$} (a)
		(a) edge node[left] {$\cong$} (b)
		(b) edge[shorten >=2pt] node[below] {$A''\otimes_A\mathsf{can}$} (f)
		(c) edge node[above] {$A''\otimes_{B'}\kappa$} (e)
		(e) edge node[right] {$\mathsf{can}''$} (d)
		(d) edge[shorten >=2pt] node[right] {$\cong$} (f);

\end{tikzpicture}
\end{center}
Since $\mathsf{can}$ and $\mathsf{can}''$ are invertible, from the commutativity of the diagram \eqref{eq:sixdiagram} we deduce that $A''\otimes_{B'}\kappa$ is invertible. Since the map $B'\to A''$ is injective (by definition of $H$-Galois extension),
by Lemma \ref{lemma:conservative2}\eqref{lemma:conservative2A} the functor $A''\otimes_{B'}(-)$ reflects isomorphisms (of left $B'$-modules), and so $\kappa$ is an isomorphism as well.
\end{proof}

The following definition generalizes the category $\Free_X(G)$ to the Hopf--Galois context.

\begin{df}
Let us fix an algebra $B$.
By $\Gal_B$ we denote the category whose objects are Hopf--Galois extensions of $B$ and whose morphisms are commutative diagrams \eqref{eq:diagaction} with $B'=B$ and $\beta$ being the identity:
\begin{equation}\label{eq:422a}
\begin{tikzpicture}[scale=2,baseline=(current bounding box.center)]

\node (Hp) at (0,1.4) {$H'$};
\node (H) at (1,1.4) {$H$};
\node (Ap) at (0,1) {$A'$};
\node (A) at (1,1) {$A$};
\node (Bp) at (0,0) {$B$};
\node (B) at (1,0) {$B$};

\path[-To,font=\footnotesize]
		(B) edge node[above] {$=$} (Bp)
		(B) edge (A)
		(Bp) edge (Ap)
		(A) edge node[below] {$\alpha$} (Ap)
		(H) edge node[above] {$\chi$} (Hp);
\end{tikzpicture}
\end{equation}
Next, let us fix an Hopf algebra $H$.
By $\Gal\,(H)$ we denote the category whose objects are $H$-Galois extensions and whose morphisms are commutative diagrams \eqref{eq:diagaction} with $H'=H$ and $\chi$ being the identity:
\begin{equation}\label{eq:422b}
\begin{tikzpicture}[scale=2,baseline=(current bounding box.center)]

\node (Hp) at (0,1.4) {$H$};
\node (H) at (1,1.4) {$H$};
\node (Ap) at (0,1) {$A'$};
\node (A) at (1,1) {$A$};
\node (Bp) at (0,0) {$B'$};
\node (B) at (1,0) {$B$};

\path[-To,font=\footnotesize]
		(B) edge node[above] {$\beta$} (Bp)
		(B) edge (A)
		(Bp) edge (Ap)
		(A) edge node[below] {$\alpha$} (Ap)
		(H) edge node[above] {$=$} (Hp);
\end{tikzpicture}
\end{equation}
Finally, let us fix both the algebra $B$ and the Hopf algebra $H$.
By $\Gal_B(H)$ we denote the category whose objects are $H$-Galois extensions of $B$, and whose morphisms are commutative diagrams \eqref{eq:diagaction} with  $\chi$ and $\beta$ identities:
\[
\begin{tikzpicture}[scale=2,baseline=(current bounding box.center)]

\node (Hp) at (0,1.4) {$H$};
\node (H) at (1,1.4) {$H$};
\node (Ap) at (0,1) {$A'$};
\node (A) at (1,1) {$A$};
\node (Bp) at (0,0) {$B$};
\node (B) at (1,0) {$B$};

\path[-To,font=\footnotesize]
		(B) edge node[above] {$=$} (Bp)
		(B) edge (A)
		(Bp) edge (Ap)
		(A) edge node[below] {$\alpha$} (Ap)
		(H) edge node[above] {$=$} (Hp);
\end{tikzpicture}
\]
By $\Prin_B(H)$ we denote the full subcategory in $\Gal_B(H)$ of right-faithfully-flat $H$-Galois extensions of $B$.
\end{df}

Now, the following corollary generalizes some fundamental results about principal bundles to the context of Hopf--Galois extensions. First, every morphism of principal bundles is the composition of a change of base, an isomorphism, and a change of structure group. Second, every morphism over the same base and with the same structure group is an isomorphism.
Observe that for this corollary one doesn't need the hypothesis of faithful flatness (or local triviality in the case of principal bundles). In particular, point \ref{pointiv} below is a generalization of Prop.~\ref{prin:groupoid}.

\begin{cor}\label{cor:9} \
\begin{enumerate}[topsep=2pt,itemsep=2pt,leftmargin=2em,label=(\roman*)]
\item\label{pointi} Every morphism \eqref{eq:diagaction} of Hopf--Galois extensions induces a (unique) algebra structure on $B'\otimes_BA$ such that $\kappa$ is an isomorphism of algebras.
\label{en:one}

\item\label{pointii} With the multiplication at point \ref{en:one}, $B'\otimes_BA$ is an $H$-Galois extension of $B'$.

\item\label{pointiii}
Every morphism \eqref{eq:diagaction} of Hopf--Galois extensions
is a composition
of a change of base, an isomorphism of $H$-Galois extensions over the same base, and a change of structure Hopf algebra.

\item\label{pointv}
Every morphism in $\Gal_B$ is a change of structure Hopf-algebra.

\item\label{pointvi}
Every morphism in $\Gal\,(H)$ is a change of base.

\item\label{pointiv} Every morphism of Hopf--Galois extensions with the same structure Hopf algebra and over the same base is an isomorphism. Equivalently, the category $\Gal_B(H)$ is a groupoid,
and $\Prin_B(H)$ is its full subgroupoid.
\end{enumerate}
\end{cor}

\begin{proof}
\ref{en:one} immediately follows Theorem \ref{thm:5} and Proposition \ref{prop47}.

\smallskip

\noindent
\ref{pointii} By a careful inspection of the proof of Proposition 2.5.7 in \cite{Sch03} one sees that, since $A'$ is an $H'$-Galois extension of $B'$, then $A'\mathbin{\Box}^{H'}\!H$ is an $H$-Galois extension of $B'$.
Since in the commutative diagram
\begin{center}
\begin{tikzpicture}[xscale=3,yscale=2.5,baseline=(current bounding box.center)]

\node (Hp) at (0,1.4) {$H$};
\node (H) at (1,1.4) {$H$};
\node (Ap) at (0,1) {$A'\mathbin{\Box}^{\smash{H'}}H$};
\node (A) at (1,1) {$B'\otimes_{\smash{B}}A$};
\node (Bp) at (0,0) {$B'$};
\node (B) at (1,0) {$B'$};

\path[-To,font=\footnotesize]
		(B) edge node[above] {$=$} (Bp)
		(B) edge (A)
		(Bp) edge (Ap)
		(A) edge node[above] {$\kappa$} (Ap)
		(H) edge node[above] {$=$} (Hp);
\end{tikzpicture}
\end{center}
$\kappa$ is an isomorphism of right $H$-comodule algebras, the left vertical arrow is an $H$-Galois extension as well.

\smallskip

\noindent
\ref{pointiii}
The diagram \eqref{eq:diagaction} can be conveniently rewritten as
\begin{equation}\label{eq:threesquares}
\begin{tikzpicture}[scale=3,baseline=(current bounding box.center)]

\node (a) at (0,1) {$A'$};
\node (b) at (1,1) {$A'\mathbin{\Box}^{H'}\!H$};
\node (c) at (2,1) {$B'\otimes_{B}A$};
\node (d) at (3,1) {$A$};
\node (e) at (0,0) {$B'$};
\node (f) at (1,0) {$B'$};
\node (g) at (2,0) {$B'$};
\node (h) at (3,0) {$B$};
\node (i) at (0,1.4) {$H'$};
\node (l) at (1,1.4) {$H$};
\node (m) at (2,1.4) {$H$};
\node (n) at (3,1.4) {$H$};

\path[To-,font=\footnotesize]
	(a) edge node[left] {$\varphi'$} (e)
	(b) edge (f)
	(c) edge (g)
	(d) edge node[right] {$\varphi$} (h)
	(a) edge (b)
	(b) edge node[above] {$\kappa$}(c)
	(c) edge (d)
	(e) edge node[above] {$=$} (f)
	(f) edge node[above] {$=$}(g)
	(g) edge node[above] {$\beta$} (h)
	(a) edge[bend right=20,dashed] node[above] {$\alpha$} (d)
	(e) edge[bend right=20,dashed] node[above] {$\beta$} (h)
	(i) edge node[above] {$\chi$}(l)
	(l) edge node[above] {$=$}(m)
	(m) edge node[above] {$=$}(n);

\end{tikzpicture}
\end{equation}
The square in the middle is an isomorphism by Theorem \ref{thm:5}, the square on the left is a change of structure Hopf algebra (see Example \ref{ex:algD}), and the square on the right is a change of base (see Example \ref{ex:cb}).

\smallskip

\noindent
\ref{pointv}
When \eqref{eq:422a} is Cartesian, it follows from Example \ref{ex:cb} that it is a change of structure Hopf-algebra.

\smallskip

\noindent
\ref{pointvi}
When \eqref{eq:422b} is Cartesian, it follows from Example \ref{ex:algD} that it is a change of base.

\smallskip

\noindent
\ref{pointiv}
If $\beta:B\to B'$ and $\chi:H\to H'$ are identities,
the maps $A\to B'\otimes_BA$ and $A'\mathbin{\Box}^{H'}\!H\to A'$ in \eqref{eq:threesquares} become canonical isomorphisms. Since $\kappa$ is an isomorphism by Theorem \ref{thm:5}, then $\alpha$ is an isomorphism as well.
\end{proof}

If $H$ has a bijective antipode, an $H$-Galois extension is left-faithfully-flat if and only if it is right-faithfully-flat (see Theorem I in \cite{Sch90}). Then, we can talk about faithful-flatness without specifying left or right.
Moreover, if $H$ has a bijective antipode, an $H$-Galois extension $B\subseteq A$ is faithfully flat if and only if it is \emph{equivariantly projective} \cite{SS05}, which in turn is equivalent to the existence of a \emph{strong connection} \cite{Haj96,DGH01}.
Let us stress that the Peter--Weyl functor transforms every compact quantum principal bundle into a faithfully-flat Hopf--Galois extension whose Hopf algebra has bijective antipode
(see~\cite[Theorem~I]{Sch90} and \cite[Prop.~10]{KS97}).
The latter is the context of the following theorem.

\begin{thm}\label{thm:andtheorem}
Consider a morphism \eqref{eq:diagaction} of faithfully-flat Hopf--Galois extensions and assume that $H$ and $H'$ have bijective antipode. Then, the morphism decomposes as in \eqref{eq:threesquares}
where every vertical arrow is a faithfully-flat Hopf--Galois extension.
\end{thm}

\begin{proof}
Firstly, $A'\mathbin{\Box}^{H'}\!H$ is faithfully-flat over $B'$ because of Proposition 2.5.7 in \cite{Sch03}. It remains to show that $B'\otimes_BA$ is faithfully-flat over $B'$.
It follows from Theorem \ref{thm:5} that the generalized canonical map $\kappa$ is an isomorphism in $_{B'}\mathcal{M}^H_A$. Next, we deduce from 
\cite[Lemma 2.1.6]{Sch03} that the diagram
\begin{center}
\begin{tikzpicture}[scale=2,baseline=(current bounding box.center)]

\node (Hp) at (0,1.4) {$H'^{\mathrm{op}}$};
\node (H) at (1,1.4) {$H^{\mathrm{op}}$};
\node (Ap) at (0,1) {$A'^{\mathrm{op}}$};
\node (A) at (1,1) {$A^{\mathrm{op}}$};
\node (Bp) at (0,0) {$B'^{\mathrm{op}}$};
\node (B) at (1,0) {$B^{\mathrm{op}}$};

\path[-To,font=\footnotesize]
		(H) edge node[above] {$\chi^{\mathrm{op}}$} (Hp)
		(B) edge node[above] {$\beta^{\mathrm{op}}$} (Bp)
		(B) edge node[right] {$\varphi^{\mathrm{op}}$} (A)
		(A) edge node[below] {$\alpha^{\mathrm{op}}$} (Ap)
		(Bp) edge node[left] {$\varphi'^{\mathrm{op}}$} (Ap);

\end{tikzpicture}
\end{center}
is also a morphism of Hopf--Galois extensions.
From Theorem \ref{thm:5} it follows that its generalized canonical map, that we denote by $\widetilde{\kappa}$, is an isomorphism
in $_{B'^{\mathrm{op}}}\mathcal{M}^{H^{\mathrm{op}}}_{A^{\mathrm{op}}}$. Explicitly,
\begin{align*}
\widetilde{\kappa}:B'^{\mathrm{op}}\otimes_{B^{\mathrm{op}}}A^{\mathrm{op}} &\longrightarrow A'^{\mathrm{op}}\mathbin{\Box}^{H'^{\mathrm{op}}}\!H^{\mathrm{op}} , \\
b'\otimes_{B^{\mathrm{op}}}  a &\longmapsto
\varphi'^{\mathrm{op}}(b')\cdot_{\mathrm{op}}\alpha^{\mathrm{op}}(a_{(0)})\otimes a_{(1)}=
\alpha(a_{(0)})\varphi'(b')\otimes a_{(1)} ,
\end{align*}
where $\cdot_{\mathrm{op}}$ is the product in $A'^{\mathrm{op}}$ and the rightmost one is the product in $A'$.
The isomorphism of vector spaces given by the flip
\[
A\otimes B'\longrightarrow
B'\otimes A ,
\qquad
a\otimes b'\longmapsto b'\otimes a ,
\]
induces a bijection between balanced tensor products:
\[
A\otimes_BB'\longrightarrow
B'^{\mathrm{op}}\otimes_{B^{\mathrm{op}}}A^{\mathrm{op}} ,
\qquad
a\otimes_Bb'\longmapsto b'\otimes_{B^{\mathrm{op}}}a .
\]
Composing this bijection with the isomorphism
\[
\kappa^{-1}\circ\widetilde{\kappa}:B'^{\mathrm{op}}\otimes_{B^{\mathrm{op}}}A^{\mathrm{op}}\longrightarrow
B'\otimes_{B}A
\]
gives the canonical algebra-factorization structure \eqref{eq:cansigma}, which is then invertible.
It follows from Lemma \ref{lemma:ffAB} that $B'\otimes_BA$ is faithfully flat over $B'$.
\end{proof}
\end{example}
\section{From algebras to Grothendieck categories}\label{sec:3.3}
\noindent
To overcome the lack of fiber products of quantum spaces in the approach based on a balanced tensor product of algebras, we introduce a categorical framework allowing us to speak about Cartesian squares between Hopf-algebra-invariant algebra maps.
In the previous section, we passed from diagrams \eqref{eq:morpp} of topological spaces to diagrams \eqref{eq:diagaction} of algebras. The aim of this section is to think of noncommutative spaces as Grothendieck categories, and pass from diagrams \eqref{eq:diagaction} of algebras to diagrams \eqref{eq:wCs} of Grothendieck categories which resemble the diagrams \eqref{eq:morpp} of spaces.

One of the benefits of such a reformulation is that if will produce an algebra structure on the domain of the generalized canonical map, in such a way that it becomes a morphism of algebras. It should be stressed that in defining it we do not rely on the algebra structure of the codomain, but we rather rely on category theory only, what makes this result quite remarkable.
To this end, we will need some preliminary results about this passage from Algebra to Category Theory.

Our Grothendieck categories will be the categories of relative Hopf modules \cite{Doi83}.
Let us recall that, given a Hopf algebra $H$ and a right $H$-comodule algebra $A$, a 
relative $(A,H)$-Hopf module is a right $A$-module right $H$-comodule $M$ satisfying the condition
\[
(ma)_{(0)}\otimes (ma)_{(1)}=m_{(0)}a_{(0)}\otimes m_{(1)}a_{(1)} ,
\]
for all $m\in M$ and $a\in A$ (we adopt the usual Sweedler notation for coactions).
We denote by $\mathcal{M}^{H}_{A}$ the category of all relative $(A,H)$-Hopf modules,
with morphisms given by $A$-module-$H$-comodule maps.

The next lemma explains how to produce new relative Hopf modules from old ones by  changing the Hopf algebra $H$ and the $H$-comodule algebra $A$ in a compatible way.

\begin{lemma}
If $M\in \mathcal{M}^{H}_{A}$, we let $M\otimes_AA'$ be the right $A'$-module and left 
$H'$-comodule with $A'$-action by right multiplication on the second factor and $H'$-coaction given by:
\begin{equation}\label{eq:Hpcoaction}
(m\otimes_Aa')_{(0)}\otimes (m\otimes_Aa')_{(1)}:= (m_{(0)}\otimes_A a'_{(0)})\otimes \chi(m_{(1)})a'_{(1)}.
\end{equation}
For $M'\in \mathcal{M}^{H'}_{A'}$, we let $M'\mathbin{\Box}^{H'}\!H$ be the right $A$-module and left $H$-comodule with $H$-coaction on second factor of the cotensor product and right $A$-action given by:
\begin{equation}\label{eq:Aaction}
\Big(\sum\nolimits_i m'_i\otimes h_i\Big)a := \sum\nolimits_i m'_i\alpha(a_{(0)})\otimes h_i a_{(1)}.
\end{equation}
Then,
\begin{itemize}
\item[(i)]
 the coaction \eqref{eq:Hpcoaction} is well-defined;
\item[(ii)]
 the action \eqref{eq:Aaction} is well-defined (its image is in $M'\mathbin{\Box}^{H'}\!H$);
\item[(iii)]
$M\otimes_AA'$ is an object of $\mathcal{M}_{A'}^{H'}$;
\item[(iv)]
$M'\mathbin{\Box}^{H'}\!H$ is an object of $\mathcal{M}_{A}^{H}$.
\end{itemize}
\end{lemma}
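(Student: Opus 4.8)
The statement packages several compatibility checks, all of which I would carry out in Sweedler notation; I would organize the proof around the two genuinely load-bearing facts and treat the remaining coassociativity, counitality, and module-axiom verifications as formal consequences. The two facts are: (a) that the formula \eqref{eq:Hpcoaction}, and the right $A'$-action by multiplication on the second factor, are compatible with the relation $ma\otimes_A a'=m\otimes_A\alpha(a)a'$ defining the tensor product over $A$ (where $A$ acts on $A'$ through $\alpha$); and (b) that the formula \eqref{eq:Aaction}, and the $H$-coaction on the second factor, map the subspace $M'\Box^{H'}H\subseteq M'\otimes H$ into itself. Everything else follows mechanically from the hypotheses that $\chi$ is a Hopf algebra map, that $\alpha$ is an $H'$-comodule algebra map (so that $\alpha(a)_{(0)}\otimes\alpha(a)_{(1)}=\alpha(a_{(0)})\otimes\chi(a_{(1)})$), and that $A,A'$ are $H$- resp.\ $H'$-comodule algebras while $M,M'$ satisfy the relative Hopf module identities.

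For (i) and (iii): the right $A'$-action on the second factor descends to $M\otimes_AA'$ because $A'$ is an $(A,A')$-bimodule via $\alpha$, and this is the structure whose compatibility with \eqref{eq:Hpcoaction} must then be recorded for (iii). For the coaction itself I would apply \eqref{eq:Hpcoaction} to $ma\otimes_A a'$, rewrite $(ma)_{(0)}\otimes(ma)_{(1)}$ by the relative Hopf identity for $M$, split $\chi(m_{(1)}a_{(1)})=\chi(m_{(1)})\chi(a_{(1)})$ since $\chi$ is an algebra map, and then use that $A'$ is an $H'$-comodule algebra together with the intertwining property of $\alpha$ to recognize the result as the image of $m\otimes_A\alpha(a)a'$; this is part (i). Granting (i), part (iii) is immediate: coassociativity and counitality of \eqref{eq:Hpcoaction} reduce to those of the coactions on $M$ and $A'$ and to $\chi$ being a coalgebra map, and the relative Hopf compatibility between the $A'$-action and \eqref{eq:Hpcoaction} follows at once from $A'$ being an $H'$-comodule algebra.

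For (ii): first one checks that \eqref{eq:Aaction} is a genuine right $A$-module structure on all of $M'\otimes H$, where associativity and unitality come from $A$ being an $H$-comodule algebra and $\chi,\alpha$ being unital algebra maps. The substantive point is that this action preserves $M'\Box^{H'}H$. Writing the latter as the equalizer of $\rho_{M'}\otimes\mathrm{id}_H$ and $\mathrm{id}_{M'}\otimes\lambda$, two maps $M'\otimes H\to M'\otimes H'\otimes H$, where $\rho_{M'}$ is the $H'$-coaction on $M'$ and $\lambda(h)=\chi(h_{(1)})\otimes h_{(2)}$, I would take $\xi=\sum_i m'_i\otimes h_i$ in the cotensor product and $a\in A$, expand $(\rho_{M'}\otimes\mathrm{id}_H)(\xi\cdot a)$ using the relative Hopf identity for $M'$ and the intertwining property of $\alpha$ to turn $(m'_i\alpha(a_{(0)}))_{(0)}\otimes(m'_i\alpha(a_{(0)}))_{(1)}$ into $(m'_i)_{(0)}\alpha(a_{(0)})\otimes(m'_i)_{(1)}\chi(a_{(1)})$, then apply the cotensor relation for $\xi$, and finally reassemble using $\chi(h_{i(1)})\chi(a_{(1)})=\chi(h_{i(1)}a_{(1)})$ and the fact that $\Delta_H$ is an algebra map; the outcome matches $(\mathrm{id}_{M'}\otimes\lambda)(\xi\cdot a)$. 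Coassociativity of the $H$- and $H'$-coactions is used throughout to relabel the iterated coproducts of $a$.

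Finally, (iv) is assembled from (ii) together with three routine checks: the $H$-coaction on the second factor, $\sum_i m'_i\otimes h_i\mapsto\sum_i m'_i\otimes h_{i(1)}\otimes h_{i(2)}$, again lands in $(M'\Box^{H'}H)\otimes H$ (apply $\Delta_H$ to the last leg of the cotensor relation and use coassociativity); it is coassociative and counital because $\Delta_H$ is; and the relative Hopf compatibility $(\xi a)_{(0)}\otimes(\xi a)_{(1)}=\xi_{(0)}a_{(0)}\otimes\xi_{(1)}a_{(1)}$ reduces, after expanding \eqref{eq:Aaction} and the second-factor coaction, to coassociativity of the $H$-coaction on $A$ and to $\Delta_H$ being an algebra map. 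The only place that demands genuine care is the computation in (ii): keeping the three tensor factors and the various iterated (co)products straight while verifying that the $A$-action does not escape $M'\Box^{H'}H$; every other step is bookkeeping.
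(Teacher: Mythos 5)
Your proposal is correct and follows essentially the same route as the paper's proof: a direct Sweedler-notation verification that the coaction kills the relations defining $M\otimes_AA'$, that the $A$-action preserves the cotensor product $M'\Box^{H'}H$, and that both resulting objects satisfy the relative Hopf-module compatibility, using exactly the same ingredients ($\chi$ an algebra map, $\alpha$ an $H'$-comodule algebra map, and the relative Hopf identities for $M$ and $M'$). Your identification of (i) and (ii) as the load-bearing steps, with the rest being routine bookkeeping, accurately reflects the structure of the paper's argument.
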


\begin{proof}
(i) We have to verify that the relations defining the balanced tensor product are mapped to zero by \eqref{eq:Hpcoaction}. This is a simple check:
\begin{align*}
& (m\otimes_A\alpha(a)a'-ma\otimes_Aa')_{(0)}\otimes (m\otimes_A\alpha(a)a'-ma\otimes_Aa')_{(1)} \\
=\; & (m\otimes_A\alpha(a)a')_{(0)}\otimes (m\otimes_A\alpha(a)a')_{(1)}
-(ma\otimes_Aa')_{(0)}\otimes (ma\otimes_Aa')_{(1)} \\
=\; & 
\big(m_{(0)}\otimes_A (\alpha(a)a')_{(0)}\big)\otimes  \chi(m_{(1)})(\alpha(a)a')_{(1)}
-\big((ma)_{(0)}\otimes_A a'_{(0)}\big)\otimes \chi((ma)_{(1)}) a'_{(1)} \\
=\; & 
\big( m_{(0)}\otimes_A\alpha(a)_{(0)}a'_{(0)}\big)\otimes \chi(m_{(1)})\alpha(a)_{(1)}a'_{(1)}
-\big(m_{(0)}a_{(0)}\otimes_Aa'_{(0)} \big)\otimes \chi(m_{(1)})\chi(a_{(1)})a'_{(1)} \\
=\; & 
\big(m_{(0)}\otimes_A \alpha(a_{(0)})a'_{(0)} \big)\otimes \chi(m_{(1)})\chi(a_{(1)})a'_{(1)}
-\big(m_{(0)}a_{(0)}\otimes_A a'_{(0)}\big) \otimes \chi(m_{(1)}) \chi(a_{(1)})a'_{(1)} \\
=\; & 
\Big( m_{(0)}\otimes_A \alpha(a_{(0)})a'_{(0)}
-m_{(0)}a_{(0)}\otimes_A a'_{(0)} \Big) \otimes  \chi(m_{(1)}a_{(1)})a'_{(1)} =0 .
\end{align*}

\medskip

\noindent
(ii) We now check that the right hand side of \eqref{eq:Aaction} belongs to the cotensor product $M'\mathbin{\Box}^{H'}\!H$. Indeed,
\begin{align*}
& \sum\nolimits_i \big(m'_i\alpha(a_{(0)})\big)_{(0)} \otimes \big(m'_i\alpha(a_{(0)})\big)_{(1)}\otimes h_i a_{(1)}\\
=\; &\sum\nolimits_i m'_{i(0)}\alpha(a_{(0)})_{(0)}\otimes m'_{i(1)}\alpha(a_{(0)})_{(1)}\otimes  h_i a_{(1)}\\
=\; &\big(\sum\nolimits_i m'_{i(0)}\otimes m'_{i(1)}\otimes  h_i \big)\big(\alpha(a_{(0)})_{(0)}\otimes \alpha(a_{(0)})_{(1)}\otimes a_{(1)}\big) \\
=\; &\big(\sum\nolimits_i m'_i\otimes \chi(h_{i(1)})\otimes  h_{i(2)} \big)\big(\alpha(a_{(0)})\otimes \chi(a_{(1)})\otimes a_{(2)}\big)\\
=\; &\sum\nolimits_i m'_i\alpha(a_{(0)})\otimes \chi(h_{i(1)} a_{(1)})\otimes  h_{i(2)} a_{(2)}\\
=\; &\sum\nolimits_i m'_i\alpha(a_{(0)})\otimes \chi\big( (h_i a_{(1)})_{(1)}\big)\otimes (h_i a_{(1)})_{(2)}
\end{align*}
for all $\sum\nolimits_i m'_i\otimes h_i\in M'\mathbin{\Box}^{H'}\!H$ and  $a\in A$.

\medskip

\noindent
(iii) Next, we prove that $M\otimes_AA'$ is an $(A',H')$-relative Hopf module. Indeed, denoting $\mu':=m\otimes_Aa'_1$, one has
\begin{align*}
& (\mu'a')_{(0)}\otimes (\mu'a')_{(1)} \\
=\; & \big(m_{(0)} \otimes_A (a'_1a')_{(0)} \big) \otimes \chi(m_{(1)})(a'_1a')_{(1)} \\
=\; & \big(m_{(0)} \otimes_A a'_{1(0)}a'_{(0)} \big) \otimes \chi(m_{(1)}) a'_{1(1)}a'_{(1)} \\
=\; & \big((m_{(0)}\otimes_A a'_{1(0)})\otimes \chi(m_{(1)})a'_{1(1)}\big)(1\otimes a'_{(0)}\otimes a'_{(1)}) \\
=\; & \mu'_{(0)}a'_{(0)}\otimes \mu'_{(1)}a'_{(1)} .
\end{align*}

\medskip

\noindent
(iv) Finally, we prove that $M'\mathbin{\Box}^{H'}\!H$ is an $(A,H)$-relative Hopf module. Indeed, denoting $\mu:=\sum\nolimits_i m'_i\otimes h_i$, then
\begin{align*}
& (\mu a)_{(0)}\otimes (\mu a)_{(1)} \\
=\; &\sum\nolimits_i m'_i\alpha(a_{(0)})\otimes (h_ia_{(1)})_{(1)}\otimes (h_ia_{(1)})_{(2)} \\
=\; &\sum\nolimits_i m'_i\alpha(a_{(0)})\otimes h_{i(1)} a_{(1)}\otimes h_{i(2)}a_{(2)} \\
=\; &\Big(\sum\nolimits_i m'_i\otimes h_{i(1)}\otimes h_{i(2)}\Big)(\alpha(a_{(0)})\otimes a_{(1)}\otimes a_{(2)}) \\
=\; & \mu_{(0)}a_{(0)}\otimes \mu_{(1)}a_{(1)} .
\end{align*}
\end{proof}

Now we are going to construct adjunctions between our Grothendieck categories.
First, using the module and comodule structure of the previous lemma we define a pair of functors:
\begin{equation}\label{eq:adjunctft}
\begin{tikzpicture}[baseline=(current bounding box.center)]

\node (a) at (0,0) {$\mathcal{M}^{H'}_{A'}$};
\node (b) at (2.5,0) {$\mathcal{M}^{H}_{A}$};

\draw[transform canvas={yshift=2pt},-To] (a) -- node[font=\footnotesize,above] {$\widetilde{f}_*$} (b);
\draw[transform canvas={yshift=-2pt},To-] (a) -- node[font=\footnotesize,below] {$\widetilde{f}^*$} (b);

\end{tikzpicture}
\end{equation}
given on objects by
\[
\widetilde{f}^*M :=M \otimes_A A'
\qquad\text{and}\qquad
\widetilde{f}_*M' :=M'\mathbin{\Box}^{H'}\!H ,
\]
defined using the maps $\alpha$ and $\chi$ in \eqref{eq:diagaction}, respectively,
and acting on morphisms in the obvious way.

Next, we prove that $\widetilde{f}^*$ and $\widetilde{f}_*$ form an adjoint pair of functors. For this, we need the following preliminary lemma.

\begin{lemma}\label{lemma:Bisolemma}
Let $M'$ be an object of $\mathcal{M}^{H'}_{A'}$. Then, there is an isomorphism of right $B$-modules given by
\begin{equation}\label{eq:firstmap3}
M'^{\textrm{co}H'} \to
(M'\mathbin{\Box}^{H'}\!H)^{\textrm{co}H} , \qquad m'\mapsto m'\otimes 1_H ,
\end{equation}
and with inverse
\begin{equation}\label{eq:secondmap3}
M'^{\textrm{co}H'} \leftarrow
(M'\mathbin{\Box}^{H'}\!H)^{\textrm{co}H} ,
\qquad \sum\nolimits_im'_i\varepsilon(h_i) \;\reflectbox{$\mapsto$}
\sum\nolimits_im'_i\otimes h_i 
\end{equation}
where $\varepsilon$ is the counit of $H$.
\end{lemma}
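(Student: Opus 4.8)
The plan is to verify directly that the two maps in \eqref{eq:firstmap3} and \eqref{eq:secondmap3} are well-defined, mutually inverse, and $B$-linear. The key structural facts I would use are: (a) on $M'\Box^{H'}H$ the $H$-coaction is on the second ($H$) leg of the cotensor product, so $(M'\Box^{H'}H)^{\mathrm{co}H}$ consists of those $\sum_i m'_i\otimes h_i$ for which $\sum_i m'_i\otimes h_{i(1)}\otimes h_{i(2)} = \sum_i m'_i\otimes h_i\otimes 1_H$; and (b) the $B$-action on $M'\Box^{H'}H$ given by \eqref{eq:Aaction} restricts, for $b\in B$ (trivial coaction, so $\chi\circ\beta$ lands back appropriately and $a_{(0)}\otimes a_{(1)} = b\otimes 1_H$), to $\bigl(\sum_i m'_i\otimes h_i\bigr)b = \sum_i m'_i\varphi'(b)\otimes h_i$, i.e.\ it only touches the $M'$ leg via the inclusion $B\hookrightarrow A'$.

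First I would check that \eqref{eq:firstmap3} lands in the right place: for $m'\in M'^{\mathrm{co}H'}$, the element $m'\otimes 1_H$ indeed lies in the cotensor product $M'\Box^{H'}H$ because $m'_{(0)}\otimes m'_{(1)}\otimes 1_H = m'\otimes 1_{H'}\otimes 1_H$ matches $m'\otimes (1_H)_{(\bar 0)}\otimes(1_H)_{(\bar 1)}$ under $\chi$ (using $\chi(1_{H'}) = 1_H$, or rather the comodule structure of $H$ over itself pulled back along $\chi$ — here one uses the cotensor condition as spelled out in the definition of $M'\Box^{H'}H$), and it is $H$-coinvariant since $1_H$ is grouplike. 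Conversely, for \eqref{eq:secondmap3} I would show that if $\sum_i m'_i\otimes h_i$ is $H$-coinvariant, then $\sum_i m'_i\epsilon(h_i)$ is $H'$-coinvariant in $M'$: apply $\mathrm{id}\otimes\mathrm{id}\otimes\epsilon$ to the coinvariance identity $\sum_i m'_i\otimes h_{i(1)}\otimes h_{i(2)} = \sum_i m'_i\otimes h_i\otimes 1_H$ to get $\sum_i m'_i\otimes h_i = \sum_i m'_i\epsilon(h_i)\otimes 1_H$, i.e.\ every element of $(M'\Box^{H'}H)^{\mathrm{co}H}$ already has the form $m'\otimes 1_H$; then the defining cotensor relation $\sum_i m'_{i(0)}\otimes m'_{i(1)}\otimes h_i = \sum_i m'_i\otimes \chi(h_{i(1)})\otimes h_{i(2)}$ applied with $\epsilon$ on the last leg gives $H'$-coinvariance of $\sum_i m'_i\epsilon(h_i)$.

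That last observation — that $(M'\Box^{H'}H)^{\mathrm{co}H}$ consists precisely of elements $m'\otimes 1_H$ with $m'$ forced to be $H'$-coinvariant — simultaneously shows the two maps are mutually inverse: \eqref{eq:firstmap3} followed by \eqref{eq:secondmap3} sends $m'\mapsto m'\otimes 1_H\mapsto m'\epsilon(1_H) = m'$, and the other composite sends $\sum_i m'_i\otimes h_i\mapsto \sum_i m'_i\epsilon(h_i)\mapsto \bigl(\sum_i m'_i\epsilon(h_i)\bigr)\otimes 1_H$, which equals $\sum_i m'_i\otimes h_i$ by the coinvariance computation above. Finally, $B$-linearity of \eqref{eq:firstmap3} is immediate from fact (b): $(m'b)\otimes 1_H = m'\varphi'(b)\otimes 1_H = (m'\otimes 1_H)\cdot b$, and linearity of the inverse follows formally.

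I expect the only real subtlety — the ``main obstacle'' — to be bookkeeping around the two layers of comodule structure: on $M'\Box^{H'}H$ there is the residual $H'$-coaction (on the $M'$ leg, used to even define the cotensor product over $H'$) versus the $H$-coaction from \eqref{eq:Aaction}/the Lemma (on the $H$ leg), and one must be careful that $\chi\colon H\to H'$ intertwines them correctly so that ``$\mathrm{co}H$'' of the new object is computed on the $H$ leg while the cotensor constraint lives on the $H'$ side. Concretely this means writing the cotensor condition as $\sum_i m'_{i(0)}\otimes m'_{i(1)}\otimes h_i = \sum_i m'_i\otimes \chi(h_{i(1)})\otimes h_{i(2)}$ and being disciplined about which Sweedler indices refer to $H'$ and which to $H$. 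Everything else is a routine counit/grouplike manipulation.
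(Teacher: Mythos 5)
Your proof is correct and follows essentially the same route as the paper: direct verification of well-definedness by applying counits to the cotensor and $H$-coinvariance conditions, the observation that every $H$-coinvariant of $M'\Box^{H'}H$ has the form $\sum_i m'_i\epsilon(h_i)\otimes 1_H$, and the resulting mutual-inverse check. The only point worth tightening is that ``applying $\epsilon$ to the last leg of the cotensor relation'' yields $\sum_i m'_{i(0)}\otimes m'_{i(1)}\epsilon(h_i)=\sum_i m'_i\otimes\chi(h_i)$, and one must still substitute the identity $\sum_i m'_i\otimes h_i=\sum_i m'_i\epsilon(h_i)\otimes 1_H$ (via $\mathrm{id}\otimes\chi$) to land on $\sum_i m'_i\epsilon(h_i)\otimes 1_{H'}$ and conclude $H'$-coinvariance --- exactly the combination the paper performs.
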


\begin{proof}
First, we check that the two maps are well defined. If $m'$ is invariant under the $H'$-coaction, then $m'\otimes 1_H$ belongs to the cotensor product and is invariant under the $H$ coaction (on the second factor). Concerning the map \eqref{eq:secondmap3},
$\sum\nolimits_im'_i\otimes h_i $ belongs to the cotensor product if and only if
\begin{equation}\label{eq:sumiscotensor}
\sum\nolimits_im'_{i(0)}\otimes m_{i(1)}\otimes h_i
=\sum\nolimits_im'_i\otimes \chi(h_{i(1)})\otimes h_{i(2)}
\end{equation}
and is $H$-invariant if and only if
\begin{equation}\label{eq:sumisHinv}
\sum\nolimits_im'_i\otimes h_{i(1)}\otimes h_{i(2)}=
\sum\nolimits_im'_i\otimes h_{i}\otimes 1_H .
\end{equation}
Applying the counit of $H$ to the third leg of \eqref{eq:sumiscotensor} and multiplying we get
\[
\sum\nolimits_im'_{i(0)}\otimes m_{i(1)}\varepsilon(h_i)
=\sum\nolimits_im'_i\otimes \chi(h_{i(1)})\varepsilon(h_{i(2)})
=\sum\nolimits_im'_i\otimes \chi(h_i)
\]
Applying the counit of $H'$, which we also denote by $\varepsilon$, to the second leg of \eqref{eq:sumisHinv} and multiplying we get
\begin{equation}\label{eq:thisistheidentity}
\sum\nolimits_im'_i\otimes h_i=
\sum\nolimits_im'_i\otimes \varepsilon(h_{i(1)})h_{i(2)}=
\sum\nolimits_im'_i\varepsilon(h_{i})\otimes 1_H .
\end{equation}
From the latter two equations we get
\[
\sum\nolimits_im'_{i(0)}\varepsilon(h_i)\otimes m'_{i(1)}=\sum\nolimits_im'_i\varepsilon(h_i)\otimes 1_{H'} ,
\]
which means that \eqref{eq:secondmap3} is well defined (its image is in the set of invariants of the $H'$-coaction).

Composing \eqref{eq:firstmap3} with \eqref{eq:secondmap3} we get the identity
\[
m'\mapsto m'\otimes 1_H\mapsto m'\varepsilon(1_H)=m'.
\]
Composing the two maps in the opposite order we get
\begin{equation*}
\sum\nolimits_im'_i\otimes h_i \mapsto \sum\nolimits_im'_i\varepsilon(h_i)\mapsto \sum\nolimits_im'_i\varepsilon(h_i)\otimes 1_H ,
\end{equation*}
which is the identity thanks to \eqref{eq:thisistheidentity}.
\end{proof}

\begin{lemma}\label{lemma:adjunction}
$\widetilde{f}^*$ is left adjoint to $\widetilde{f}_*$.
\end{lemma}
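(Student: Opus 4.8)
The plan is to realize the adjunction via an explicit unit and counit, since the hard structural work --- that $M\otimes_AA'$ is a relative $(A',H')$-Hopf module and $M'\,\Box^{H'}H$ a relative $(A,H)$-Hopf module --- has already been done in the previous lemma. Concretely, for $M\in\mathcal{M}^H_A$ I would take the unit
\[
\eta_M\colon M\longrightarrow (M\otimes_AA')\,\Box^{H'}H,\qquad \eta_M(m):=(m_{(0)}\otimes_A1_{A'})\otimes m_{(1)},
\]
and for $M'\in\mathcal{M}^{H'}_{A'}$ the counit
\[
\varepsilon_{M'}\colon (M'\,\Box^{H'}H)\otimes_AA'\longrightarrow M',\qquad
\varepsilon_{M'}\Bigl(\bigl(\textstyle\sum_im'_i\otimes h_i\bigr)\otimes_Aa'\Bigr):=\sum_im'_i\,\epsilon(h_i)\,a',
\]
with $\epsilon$ the counit of $H$ (the rule $\sum_im'_i\otimes h_i\mapsto\sum_im'_i\epsilon(h_i)$ appearing here is precisely the one from \eqref{eq:secondmap3}).

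First I would check well-definedness: that $\eta_M(m)$ really lies in the cotensor product is a one-line Sweedler computation using coassociativity, the formula \eqref{eq:Hpcoaction} for the $H'$-coaction on $M\otimes_AA'$, and unitality of the coaction on $A'$; that $\varepsilon_{M'}$ descends to the balanced tensor product follows from \eqref{eq:Aaction}, multiplicativity of $\epsilon$, and the counit identity $a_{(0)}\epsilon(a_{(1)})=a$ (together with $\alpha$ being an algebra map). Next I would verify that $\eta_M$ is a morphism in $\mathcal{M}^H_A$ and $\varepsilon_{M'}$ a morphism in $\mathcal{M}^{H'}_{A'}$, the only mildly delicate point being $H'$-colinearity of $\varepsilon_{M'}$, which one obtains by applying $\epsilon$ to the last leg of the cotensor relation \eqref{eq:sumiscotensor} and then letting $a'$ act, using that $M'$ is a relative $(A',H')$-Hopf module; naturality in $M$ and $M'$ is immediate from the formulas. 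It then remains to establish the two triangle identities: for $M\in\mathcal{M}^H_A$ the composite $\widetilde{f}^*M\xrightarrow{\widetilde{f}^*\eta_M}\widetilde{f}^*\widetilde{f}_*\widetilde{f}^*M\xrightarrow{\varepsilon_{\widetilde{f}^*M}}\widetilde{f}^*M$ sends $m\otimes_Aa'$ to $m_{(0)}\epsilon(m_{(1)})\otimes_Aa'=m\otimes_Aa'$, and for $M'\in\mathcal{M}^{H'}_{A'}$ the composite $\widetilde{f}_*M'\xrightarrow{\eta_{\widetilde{f}_*M'}}\widetilde{f}_*\widetilde{f}^*\widetilde{f}_*M'\xrightarrow{\widetilde{f}_*\varepsilon_{M'}}\widetilde{f}_*M'$ sends $\sum_im'_i\otimes h_i$ to $\sum_im'_i\epsilon(h_{i(1)})\otimes h_{i(2)}=\sum_im'_i\otimes h_i$; both are just the counit axiom, giving $\widetilde{f}^*\dashv\widetilde{f}_*$. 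Equivalently, and perhaps more economically, one may exhibit the adjunction bijection directly as $\mathrm{Hom}_{\mathcal{M}^{H'}_{A'}}(M\otimes_AA',M')\cong\mathrm{Hom}_{\mathcal{M}^H_A}(M,M'\,\Box^{H'}H)$, sending $g\mapsto\bigl(m\mapsto g(m_{(0)}\otimes_A1_{A'})\otimes m_{(1)}\bigr)$ with inverse $\phi\mapsto\bigl(m\otimes_Aa'\mapsto(\id\otimes\epsilon)(\phi(m))\,a'\bigr)$, and verify these are mutually inverse and natural --- again a short reckoning with the counit axioms plus $A'$-linearity of $g$ and $H$-colinearity of $\phi$.

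The only genuine obstacle is bookkeeping. One must constantly track which of $H$ and $H'$ each Sweedler factor lives in, distinguish the coaction on an $A$- or $A'$-leg from the one on an $H$- or $H'$-leg of a (co)tensor product, and handle the interaction of the balanced-tensor relation $m\otimes_A\alpha(a)a'=ma\otimes_Aa'$ with the ``diagonal'' $A$-action \eqref{eq:Aaction} on $M'\,\Box^{H'}H$. Once these conventions are pinned down, every step collapses to a brief manipulation with coassociativity, the counit axioms, and the relative-Hopf-module compatibility condition; no deeper input (such as flatness or a Galois hypothesis) is needed, consistent with the statement being made for arbitrary Hopf-algebra-invariant algebra maps.
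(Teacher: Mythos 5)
Your proposal is correct and follows essentially the same route as the paper: the same unit $\eta_M(m)=(m_{(0)}\otimes_A 1)\otimes m_{(1)}$ and counit $(\sum_i m'_i\otimes h_i)\otimes_A a'\mapsto \sum_i m'_i\epsilon(h_i)a'$, the same well-definedness and (co)module-morphism checks, and the same triangle-identity computations. The only difference is cosmetic (your remark that one could instead exhibit the hom-set bijection directly), so nothing further is needed.
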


\begin{proof}
First, we construct a unit $\eta:1\to \widetilde{f}_*\widetilde{f}^*$.
Given an object $M$ of $\mathcal{M}_{A}^{H}$, the $M$-component of the unit is defined as:
\[
\eta_M:M\to \widetilde{f}_*\widetilde{f}^*M=(M\otimes_A A')\mathbin{\Box}^{H'}\!H
, \qquad
m \mapsto (m_{(0)}\otimes_A 1)\otimes m_{(1)} .
\]
Next, we prove that $\eta_M$ is well-defined, i.e.~that each image is contained in the appropriate cotensor product as above. One has
\begin{align*}
& (m_{(0)}\otimes_A 1)_{(0)}\otimes (m_{(0)}\otimes_A 1)_{(1)}\otimes m_{(1)} \\
=\; & \big(m_{(0)}\otimes_A 1\big)\otimes \chi(m_{(1)})\otimes m_{(2)} \\
=\; & \big(m_{(0)}\otimes_A 1\big)\otimes \chi(m_{(1)(1)})\otimes m_{(1)(2)}.
\end{align*}
Next, we check that $\eta_M$ is an $A$-module map.
For all $m\in M$ and $a\in A$, one has
\begin{align*}
\eta_M(ma) =\; &((ma)_{(0)}\otimes_A 1)\otimes (ma)_{(1)} \\
&=(m_{(0)}a_{(0)}\otimes_A 1)\otimes m_{(1)}a_{(1)} \\
&=\big(m_{(0)}\otimes_A \alpha(a_{(0)})\big)\otimes m_{(1)}a_{(1)}  \\
&=\big((m_{(0)}\otimes_A1)\otimes m_{(1)}\big)\cdot\big((1\otimes_A \alpha(a_{(0)}))\otimes a_{(1)}\big) \\
&=\eta_M(m)a .
\end{align*}
Then, we prove that $\eta_M$ is an $H$-comodule map.
For $m\in M$, one has
\[
\eta_M(m)_{(0)}\otimes \eta_M(m)_{(1)} =(m_{(0)}\otimes_A 1)\otimes m_{(1)} \otimes m_{(2)}
=\eta_M(m_{(0)})\otimes m_{(1)} .
\]

Next, we construct a counit $\varepsilon:\widetilde{f}^*\widetilde{f}_*\to 1$.
Given $M'\in \mathcal{M}_{A'}^{H'}$, its $M'$-component is defined as:
\[
\varepsilon_{M'}:(M'\mathbin{\Box}^{H'}\!H)\otimes_A A'=\widetilde{f}^*\widetilde{f}_*M'\to M'
,\qquad
\Big(\sum\nolimits_im'_i\otimes h_i\Big) \otimes_Aa'\mapsto \sum\nolimits_i m'_i\,\varepsilon(h_i)a' ,
\]
where on the right hand side we have the counit of $H$.
First, we prove that the map is well-defined with domain the balanced tensor product. By the definition of the action of $A$ on the cotensor product, we have:
\begin{align*}
& \Big(\sum\nolimits_im'_i\otimes h_i\Big) \otimes_A \alpha(a)a'-
\Big(\sum\nolimits_im'_i\otimes h_i\Big)a\otimes_Aa' \\
=\; & \Big(\sum\nolimits_im'_i\otimes h_i\Big)\otimes_A\alpha(a)a'-
\Big(\sum\nolimits_i  m'_i\alpha(a_{(0)})\otimes h_ia_{(1)}\Big)\otimes_{A}a' \\
\mapsto\; & \sum\nolimits_i m'_i\,\varepsilon(h_i)\alpha(a)a'-
\sum\nolimits_i m'_i\alpha(a_{(0)})\varepsilon(h_ia_{(1)})a'
 \\
=\; &\sum\nolimits_i \Big(m'_i\,\varepsilon(h_i)\alpha(a)a'- m'_i\alpha(a_{(0)})\varepsilon(h_i)\varepsilon(a_{(1)})a' \Big) \\
=\; &\sum\nolimits_i \Big(m'_i\,\varepsilon(h_i)\alpha(a)a'-m'_i\alpha\big(a_{(0)}\varepsilon(a_{(1)})\big)\varepsilon(h_i)a'  \Big) \\
=\; &\sum\nolimits_i \Big(m'_i\,\varepsilon(h_i)\alpha(a)a'-m'_i\alpha(a)\varepsilon(h_i)a' \Big) =0.
\end{align*}
Now, $\varepsilon_{M'}$ is clearly an $A'$-module map. Finally, we prove that $\varepsilon_{M'}$ is an $H'$-comodule map.
Denoting as above $\mu:=\sum\nolimits_i m'_i\otimes h_i$, one has
\[
(\mu\otimes a')_{(0)}\otimes (\mu\otimes a')_{(1)} = 
\sum\nolimits_i\big((m'_i\otimes h_{i(1)})\otimes_Aa'_{(0)}\big)\otimes \chi(h_{i(2)})a'_{(1)} .
\]
Also, by definition of cotensor product
\[
\sum\nolimits_i m'_{i(0)}\otimes m'_{i(1)}\otimes h_i=\sum\nolimits_i m'_i\otimes \chi(h_{i(1)})\otimes h_{i(2)}
\]
which implies:
\[
\sum\nolimits_i m'_{i(0)}\otimes m'_{i(1)}\varepsilon(h_i)=\sum\nolimits_i m'_i\otimes \chi\big(h_{i(1)}\varepsilon(h_{i(2)})\big)
=\sum\nolimits_i m'_i\otimes \chi\big(h_{i}).
\]
Therefore
\begin{align*}
& \varepsilon_{M'} \big( (\mu\otimes a')_{(0)} \big)
\otimes (\mu\otimes a')_{(1)} \\
=\; & \sum\nolimits_i m'_i\,\varepsilon(h_{i(1)})a'_{(0)}\otimes \chi(h_{i(2)})a'_{(1)}
\\
=\; & \sum\nolimits_i m'_ia'_{(0)}\otimes \chi\big(\varepsilon(h_{i(1)})h_{i(2)}\big)a'_{(1)}
\\
=\; & \sum\nolimits_i m'_ia'_{(0)}\otimes \chi(h_{i})a'_{(1)}
\\
=\; & \sum\nolimits_i  m'_{i(0)}a'_{(0)}\otimes m'_{i(1)}  \varepsilon(h_i)a'_{(1)}
\\
=\; & \sum\nolimits_i( m'_ia' )_{(0)}\otimes (m'_ia')_{(1)}  \varepsilon(h_i)
\\
=\; & \Big(\sum\nolimits_i m'_ia'\,\varepsilon(h_i)\Big)_{(0)}\otimes\Big(\sum\nolimits_i m'_ia'\,\varepsilon(h_i)\Big)_{(1)}
\\
=\; & \varepsilon_{M'}(\mu\otimes a')_{(0)}
\otimes \varepsilon_{M'}(\mu\otimes a')_{(1)}
\end{align*}

In order to prove the adjunction we must check that for every objects $M$ of $\mathcal{M}_{A}^H$  and $M'$ of $\mathcal{M}_{A'}^{H'}$ one has
\[
\varepsilon_{\widetilde{f}^*M}\circ \widetilde{f}^*(\eta_M) = 1_{\widetilde{f}^*M} \qquad\text{and}\qquad
\widetilde{f}_*(\varepsilon_{M'})\circ\eta_{\widetilde{f}_*M'}=1_{\widetilde{f}_*M'}.
\]
that more explicitly means
\[
\varepsilon_{M\otimes_AA'}\circ \widetilde{f}^*(\eta_M) = 1_{M\otimes_AA'}
\qquad\text{and}\qquad
\widetilde{f}_*(\varepsilon_{M'})\circ\eta_{M'\mathbin{\Box}^{H'}\!H}=1_{M'\mathbin{\Box}^{H'}\!H}.
\]
Using the right $A'$-module structure of $M\otimes_AA'$, we get
\begin{align*}
\widetilde{f}^*(\eta_M):m\otimes_Aa' &\mapsto \big((m_{(0)}\otimes_A1) \otimes m_{(1)}\big)\otimes_Aa' ,
\\
\varepsilon_{M\otimes_AA'}\circ\widetilde{f}^*(\eta_M):m\otimes_Aa' &\mapsto 
(m_{(0)}\otimes_A 1)\varepsilon(m_{(1)})a'=m_{(0)}\varepsilon(m_{(1)})\otimes_Aa'=m\otimes_Aa' .
\end{align*}
Denoting $\mu:=\sum\nolimits_i m'_i\otimes h_i$, we get
\begin{align*}
\eta_{M'\mathbin{\Box}^{H'}\!H}:\mu &\mapsto (\mu_{(0)}\otimes_A1)\otimes \mu_{(1)}=
\sum\nolimits_i ((m'_i\otimes h_{i(1)})\otimes_A1)\otimes h_{i(2)} ,
\\
\widetilde{f}_*(\varepsilon_{M'})\circ\eta_{M'\mathbin{\Box}^{H'}\!H}:\mu &\mapsto 
\sum\nolimits_i m'_i\varepsilon(h_{i(1)})\otimes h_{i(2)}=
\sum\nolimits_i m'_i\otimes \varepsilon(h_{i(1)})h_{i(2)}=\sum\nolimits_i m'_i\otimes h_i=\mu .
\end{align*}
\end{proof}

\begin{df}
In the adjunction \eqref{eq:adjunctft},
replacing the Hopf algebra $H$ by the trivial Hopf algebra $\Bbbk$, 
the Hopf algebra map $\chi:H\to H'$ by the unit map $\eta:\Bbbk\to H'$,
the $H$-module algebra $A$ by the algebra $B'$ with the trivial coaction of $\Bbbk$, and the $H'$-module algebra map $\alpha:A\to A'$ by the $H'$-invariant map $\varphi':B'\to A'$,
and using Lemma \ref{lemma:Bisolemma}, we obtain an adjunction $\mathcal{q}'=(q'^* \dashv q'_*)$
\begin{equation}\label{eq:adjunqp}
\mathcal{M}^{H'}_{A'}\xrightarrow{\quad\mathcal{q}'\quad}\mathcal{M}_{B'}
\end{equation}
In the adjunction \eqref{eq:adjunqp}, removing all the primes, we 
obtain an adjunction $\mathcal{q}=(q^* \dashv q_*)$
\begin{equation}\label{eq:adjunq}
\mathcal{M}^{H}_{A}\xrightarrow{\quad\mathcal{q}\quad}\mathcal{M}_{B}
\end{equation}
In the adjunction \eqref{eq:adjunctft},
replacing both the Hopf algebra $H$ and $H'$ by the trivial Hopf algebra $\Bbbk$, 
the Hopf algebra map $\chi:H\to H'$ by the identity,
the $H$-module algebra $A$ by the algebra $B$ with the trivial coaction of $\Bbbk$,
the $H'$-module algebra $A'$ by the algebra $B'$ with the trivial coaction of $\Bbbk$,
and the $H'$-module algebra map $\alpha:A\to A'$ by the map $\beta:B\to B'$,
and using the fact that $(-)\mathbin{\Box}^{\Bbbk}\Bbbk$ is the identity functor,
we obtain an adjunction $\mathcal{f}=(f^* \dashv f_*)$
\begin{equation}\label{eq:adjunf}
\mathcal{M}_{B'}\xrightarrow{\quad\mathcal{f}\quad}\mathcal{M}_{B}
\end{equation}
As a result we obtain the following form of these four adjunctions
\eqref{eq:adjunctft},
\eqref{eq:adjunqp},
\eqref{eq:adjunq} and
\eqref{eq:adjunf}
\begingroup
\renewcommand{\arraystretch}{1.5}
\setlength{\arraycolsep}{5pt}
\[
\begin{array}{|c|c|c|}
\cline{1-1}\cline{3-3}
\mathcal{q}'=(q'^* \dashv q'_*)
&&
\mathcal{\widetilde{f}}=(\widetilde{f}^* \dashv \widetilde{f}_*) \\
\cline{1-1}\cline{3-3}
q'^*N' :=N'\otimes_{B'}A'
&&
\widetilde{f}^*M :=M \otimes_A A'\\
q'_*M' :=M'^{\textrm{co}H'}
&&
\widetilde{f}_*M' :=M'\mathbin{\Box}^{H'}\!H \\
\cline{1-1}\cline{3-3}
\noalign{\vskip 10pt}
\cline{1-1}\cline{3-3}
\mathcal{f}=(f^* \dashv f_*)
&&
\mathcal{q}=(q^* \dashv q_*)
\\
\cline{1-1}\cline{3-3}
f^*N :=N\otimes_{B}B'
&&
q^*N :=N \otimes_B A\\
f_*N' :=N'
&&
q_*M :=M^{\textrm{co}H} \\
\cline{1-1}\cline{3-3}
\end{array}
\]
\endgroup
\end{df}

\begin{rmk}
The adjunction $f^*\dashv f_*$ is well known. The adjunctions $q^* \dashv q_*$ and $q'^* \dashv q'_*$ are, e.g., in \cite{Sch90} (see also \cite{BW03}).
\end{rmk}

\begin{thm}\label{thm:310}
Let us associate with a morphism \eqref{eq:diagaction} of Hopf-algebra-invariant algebra maps the diagram
\begin{equation}\label{diag:above}
\begin{tikzpicture}[baseline=(current bounding box.center)]

\node (Ap) at (0,2) {$\mathcal{M}^{H'}_{A'}$};
\node (A) at (2.5,2) {$\mathcal{M}^{H}_{A}$};
\node (Bp) at (0,0) {$\mathcal{M}_{B'}$};
\node (B) at (2.5,0) {$\mathcal{M}_{B}$};

\path[-To,font=\footnotesize]
		(Bp) edge node[below] {$\mathcal{f}$} (B)
		(A) edge node[right] {$\mathcal{q}$} (B)
		(Ap) edge node[left] {$\mathcal{q}'$} (Bp)
		(Ap) edge node[above] {$\widetilde{\mathcal{f}}$} (A);


\end{tikzpicture}
\end{equation}
Then, \eqref{diag:above} is a weakly commutative square and gives a functor from the opposite category of Hopf-algebra-invariant algebra maps into the 2-category $\mathfrak{G}(1)$ of Sect.~\ref{sec:catpre}.
\end{thm}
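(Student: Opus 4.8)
The plan is to verify the two assertions in sequence: first that the square \eqref{diag:above} is weakly commutative in the sense of Section \ref{sec:catpre}, i.e.\ that there is a chosen natural isomorphism $f_* q'_* \Longrightarrow q_* \widetilde f_*$ of functors $\mathcal{M}^{H'}_{A'} \to \mathcal{M}_B$; and second that the assignment $(H,A\leftarrow B)\mapsto (\mathcal{q}:\mathcal{M}^H_A\to\mathcal{M}_B)$, morphism $\mapsto$ square \eqref{diag:above}, is functorial, i.e.\ defines a functor from the opposite category of Hopf-algebra-invariant algebra maps to $\mathfrak{G}(1)$.

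For the first part, I would unwind the functors along the chosen paths. Along the bottom-then-left route, $f_* q'_* M' = (M'^{\mathrm{co}H'})$ viewed as a $B$-module via $\beta: B\to B'$ (since $f_*$ is restriction along $\beta$ and $f_* N' = N'$). Along the right-then-top route, $q_*\widetilde f_* M' = (M'\Box^{H'}H)^{\mathrm{co}H}$. But this is exactly the pair of $B$-modules related by the isomorphism \eqref{eq:firstmap3}--\eqref{eq:secondmap3} of Lemma \ref{lemma:Bisolemma}: $M'^{\mathrm{co}H'} \xrightarrow{\ \cong\ } (M'\Box^{H'}H)^{\mathrm{co}H}$, $m'\mapsto m'\otimes 1_H$. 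So the chosen 2-cell is precisely the family $\{$ \eqref{eq:firstmap3} $\}_{M'}$. It remains to check (i) that this is a $B$-module map — but $B$ acts on the source through $\beta$ and on $M'\Box^{H'}H$ through the $A$-action restricted along $\varphi\circ(\text{incl})$; compatibility follows from the commutativity of the square \eqref{eq:diagaction} relating $\beta$, $\varphi'$, $\varphi$, $\alpha$ — this is already implicit in Lemma \ref{lemma:Bisolemma} being stated as a $B$-module isomorphism; and (ii) naturality in $M'$, which is routine since both sides are built from the comodule structure maps and $-\otimes 1_H$ commutes with $H'$-comodule maps. Thus \eqref{diag:above} is weakly commutative.

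For the second part (functoriality into $\mathfrak{G}(1)$), recall that a $1$-cell of $\mathfrak{G}(1)$ is a weakly commutative vertical square between vertical $1$-cells of $\mathfrak{G}$, so I must produce, from each morphism \eqref{eq:diagaction}, exactly the datum of the square \eqref{diag:above} together with its $2$-cell — which the previous paragraph supplies — and then check that composition of morphisms of Hopf-algebra-invariant algebra maps goes to composition of such squares (horizontal pasting of weakly commutative squares) and that identities go to identities. On objects, the vertical $1$-cell attached to $(H,A\leftarrow B)$ is $\mathcal{q}:\mathcal{M}^H_A\to\mathcal{M}_B$ from \eqref{eq:adjunq}; on a morphism with data $(\chi,\alpha,\beta,\varphi,\varphi')$ one forms $\mathcal f$ from $\beta$ via \eqref{eq:adjunf}, $\mathcal{\widetilde f}$ from $(\chi,\alpha)$ via \eqref{eq:adjunctft}, and $\mathcal{q}'$ from $(\varphi')$ via \eqref{eq:adjunqp}, assembled into \eqref{diag:above}. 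Given a composable pair of morphisms, the composite has Hopf algebra map $\chi\chi''$ and algebra maps $\alpha\alpha''$ etc.; one checks that $\widetilde f_* \widetilde{f''}_* \cong \widetilde{(ff'')}_*$ canonically — i.e.\ that $(-\Box^{H'}H)\Box^{H''}H'' \cong -\Box^{H''}H''$ via coassociativity of the cotensor product — and similarly that the composite $2$-cells agree, which reduces to a compatibility of the maps $m'\mapsto m'\otimes 1$ under iteration. Covariance versus contravariance: since $f_*$, $\widetilde f_*$, $q_*$ all go in the direction $\mathcal{M}'\to\mathcal{M}$ opposite to the algebra maps $\beta,\alpha,\varphi$, the assignment is contravariant on the underlying algebra data, hence a (covariant) functor out of the \emph{opposite} category, as stated.

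I expect the main obstacle to be bookkeeping rather than conceptual: checking that the chosen $2$-cell \eqref{eq:firstmap3} really is a morphism of $B$-modules and is natural requires carefully tracking which coactions and which algebra maps ($\beta$ versus $\varphi$, and their interaction via the commutative square \eqref{eq:diagaction}) are used to define the module structures on $M'^{\mathrm{co}H'}$ and $(M'\Box^{H'}H)^{\mathrm{co}H}$; and on the functoriality side, producing the canonical coherence isomorphisms for iterated cotensor products $-\Box^{H'}H$ and verifying the cocycle/coherence condition needed to land in the $2$-category $\mathfrak{G}(1)$ (where $2$-cells are weakly commutative cubes) is the place where one must be most careful. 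Everything else is a direct consequence of Lemmas \ref{lemma:Bisolemma} and \ref{lemma:adjunction} and the definitions of the four adjunctions.
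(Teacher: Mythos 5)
Your proposal is correct and follows essentially the same route as the paper: the invertible $2$-cell $f_*q'_*\Longrightarrow q_*\widetilde{f}_*$ is exhibited componentwise as the isomorphism $M'^{\mathrm{co}H'}\to (M'\Box^{H'}H)^{\mathrm{co}H}$ of Lemma \ref{lemma:Bisolemma}, and functoriality into $\mathfrak{G}(1)$ follows from the natural dependence of the four adjunctions on the data of \eqref{eq:diagaction}. If anything, you spell out the coherence checks for composition (pasting of squares, cotensor associativity) more explicitly than the paper, which dispatches them in one sentence.
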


\begin{proof}
All four arrows in \eqref{diag:above} depend naturally on the arrows in \eqref{eq:diagaction}.
Let $M'$ be an object of $\mathcal{M}^{H'}_{A'}$.
The $M'$-component of the natural transformation
\begin{equation}\label{eq:explaincanA}
f_*q'_* \Longrightarrow q_*\widetilde{f}_*
\end{equation}
is given by the right $B$-module map \eqref{eq:firstmap3}. Since by Lemma \ref{lemma:Bisolemma} this is an isomorphism, the natural transformation is an isomorphism, and hence \eqref{diag:above} is weakly commutative.
\end{proof}

Even if the functor in Theorem \ref{thm:310} is not a full embedding, it has the nice property that a morphism of Hopf-algebra-invariant algebra maps is Cartesian if and only if the corresponding weakly commutative square is weakly Cartesian. This is the content of the next theorem.

\begin{thm}\label{thm:312}
Consider a morphism \eqref{eq:diagaction} of
Hopf-algebra-invariant algebra maps, and assume that $\chi:H\to H'$ is left coflat.
Then, bijectivity of the generalized canonical map \eqref{eq:cangen} is equivalent to the Beck--Chevalley condition for the diagram \eqref{diag:above}, stating that its mate diagram
\begin{equation}\label{eq:matediagram}
\begin{tikzpicture}[baseline=(current bounding box.center)]

\node (Ap) at (0,2) {$\mathcal{M}_{A'}^{H'}$};
\node (A) at (2.5,2) {$\mathcal{M}_{A}^{H}$};
\node (Bp) at (0,0) {$\mathcal{M}_{B'}$};
\node (B) at (2.5,0) {$\mathcal{M}_{B}$};

\path[-To,font=\footnotesize]
		(Bp) edge node[below] {$f_*$} (B)
		(B) edge node[right] {$q^*$} (A)
		(Bp) edge node[left] {$q'^*$} (Ap)
		(Ap) edge node[above] {$\widetilde{f}_*$} (A);

\draw[-implies,double equal sign distance, shorten >=18pt, shorten <=18pt] (B) -- 
(Ap);

\end{tikzpicture}
\end{equation}
commutes up to an invertible natural transformation $q^*f_* \Rightarrow \widetilde{f}_*q'^*$.
\end{thm}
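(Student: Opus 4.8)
The plan is to unwind the Beck--Chevalley natural transformation $q^*f_* \Rightarrow \widetilde{f}_*q'^*$ for the square \eqref{diag:above} explicitly, compute its components on objects, and identify it with the generalized canonical map \eqref{eq:cangen} up to the coflatness hypothesis on $\chi$. First I would recall how the mate is built: from the 2-cell \eqref{eq:explaincanA} $f_*q'_* \Rightarrow q_*\widetilde{f}_*$ of Theorem \ref{thm:310}, which Lemma \ref{lemma:Bisolemma} shows is invertible, one obtains the mate under the adjunctions $q'^*\dashv q'_*$, $q^*\dashv q_*$, $f^*\dashv f_*$, $\widetilde{f}^*\dashv \widetilde{f}_*$ by pasting with the relevant units and counits. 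Concretely, for $N'\in\mathcal{M}_{B'}$ the component $q^*f_*N' \to \widetilde{f}_*q'^*N'$ is $(N'\otimes_{B'}A)\otimes_B A \dashrightarrow$ wait --- more precisely it is the map $N'\otimes_B A \to (N'\otimes_{B'}A')\,\Box^{H'}H$; since $f^*N'=N'$ as a $B$-module via $\beta$ and $q^*N = N\otimes_B A$, the source is $N'\otimes_B A$, and the target $q'^*N' = N'\otimes_{B'}A'$ followed by $\widetilde{f}_*$, i.e.\ $(N'\otimes_{B'}A')\,\Box^{H'}H$. I would write out the composite of unit and counit maps and check it sends $n'\otimes_B a \mapsto (n'\otimes_{B'}\alpha(a_{(0)}))\otimes a_{(1)}$, which is visibly $N'\otimes_{B'}(-)$ applied to the generalized canonical map $\kappa$ of \eqref{eq:cangen} after identifying $B'\otimes_B A$ with the $N'=B'$ case.

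Next I would argue in two directions. For the easy direction, if \eqref{eq:matediagram} commutes up to isomorphism, then taking $N' = B'$ gives that $\kappa = B'\otimes_B A \to A'\,\Box^{H'}H$ is an isomorphism, i.e.\ the morphism is Cartesian. For the converse, suppose $\kappa$ is bijective; I want every component $N'\otimes_B A \to (N'\otimes_{B'}A')\,\Box^{H'}H$ to be an isomorphism. The strategy is a base-change argument: the source is $N'\otimes_{B'}(B'\otimes_B A)$ and, using that $\chi$ is left coflat so that $(-)\Box^{H'}H$ commutes with the relevant tensor products, the target is $N'\otimes_{B'}(A'\,\Box^{H'}H)$; under these identifications the Beck--Chevalley component becomes $\mathrm{id}_{N'}\otimes_{B'}\kappa$, hence an isomorphism. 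This is where the coflatness of $\chi$ is genuinely used: it is precisely what lets the functor $\widetilde{f}_* = (-)\Box^{H'}H$ be pulled through the (right-exact, but not obviously left-exact) tensor product $N'\otimes_{B'}-$ so that the general component is obtained from the $N'=B'$ component by applying $N'\otimes_{B'}-$.

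I expect the main obstacle to be exactly this last step: verifying that $\widetilde{f}_*q'^*N' = (N'\otimes_{B'}A')\,\Box^{H'}H$ is naturally isomorphic to $N'\otimes_{B'}(A'\,\Box^{H'}H)$ as a functor of $N'$, compatibly with the module and comodule structures, and that under this identification the Beck--Chevalley map is $\mathrm{id}_{N'}\otimes_{B'}\kappa$. The cotensor product $\Box^{H'}$ is an equalizer, so commuting it past a tensor product is not automatic; it holds here because $\chi$ left coflat makes $(-)\Box^{H'}H\colon \mathcal{M}^{H'}\to\mathcal{M}^H$ exact, and because $A'$ is free (hence flat) as the thing being tensored on --- but one must be careful about which variable is which. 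I would handle this by writing $A'\,\Box^{H'}H$ as the kernel of $A'\otimes H \rightrightarrows A'\otimes H'\otimes H$, tensoring the whole equalizer diagram with $N'\otimes_{B'}-$, and invoking coflatness to see the result is still an equalizer computing $(N'\otimes_{B'}A')\,\Box^{H'}H$. The remaining bookkeeping --- that all the maps in sight are $A$-linear and $H$-colinear, and that the unit/counit composites really do produce $\kappa$ on the nose --- is routine Sweedler-notation manipulation of the kind already carried out in the proofs of Lemma \ref{lemma:adjunction} and Lemma \ref{lemma:Bisolemma}, so I would only indicate it rather than reproduce it in full.
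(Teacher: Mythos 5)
Your proposal follows essentially the same route as the paper's proof: decompose the mate $q^*f_*\Rightarrow \widetilde{f}_*q'^*$ through the unit of $q'^*\dashv q'_*$, the inverse of the 2-cell $f_*q'_*\Rightarrow q_*\widetilde{f}_*$, and the counit of $q^*\dashv q_*$; compute its $N'$-component to be $n'\otimes_B a\mapsto (n'\otimes_{B'}\alpha(a_{(0)}))\otimes a_{(1)}$; identify source and target with $N'\otimes_{B'}(B'\otimes_B A)$ and $N'\otimes_{B'}(A'\,\Box^{H'}H)$ so that the component becomes $N'\otimes_{B'}\kappa$; and conclude both directions by naturality, recovering $\kappa$ at $N'=B'$. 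This is exactly the paper's argument in outline.

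The one step where your justification as written would not go through is the identification $(N'\otimes_{B'}A')\,\Box^{H'}H\cong N'\otimes_{B'}(A'\,\Box^{H'}H)$. You propose to present $A'\,\Box^{H'}H$ as the equalizer of $A'\otimes H\rightrightarrows A'\otimes H'\otimes H$ and then apply $N'\otimes_{B'}(-)$ to that equalizer diagram. But $N'\otimes_{B'}(-)$ is only right exact and no flatness of $N'$ over $B'$ is assumed, so it need not preserve this equalizer; left coflatness of $\chi$ is a hypothesis on the functor $(-)\,\Box^{H'}H$, not on $N'\otimes_{B'}(-)$. The correct (dual) manipulation is the one the paper uses in \eqref{eq:balcot}: present $N'\otimes_{B'}A'$ as the coequalizer of $N'\otimes B'\otimes A'\rightrightarrows N'\otimes A'$ in $\mathcal{M}^{H'}$ and apply $(-)\,\Box^{H'}H$, which preserves coequalizers precisely because $\chi$ is left coflat; the first two terms of the resulting coequalizer agree with $N'\otimes B'\otimes(A'\,\Box^{H'}H)$ and $N'\otimes(A'\,\Box^{H'}H)$ because cotensoring over a field commutes with tensoring by a vector space, and this yields the isomorphism \eqref{eq:secondiso}. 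With that substitution your argument coincides with the paper's.
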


\begin{proof}
Let us recall that the natural transformation for the diagram \eqref{eq:matediagram} is a mate for commutativity of \eqref{diag:above}:
\begin{equation}\label{eq:explaincan}
f_*q'_* \xRightarrow{\;\cong\;} q_*\widetilde{f}_* 
\end{equation}
and decomposes as
\begin{equation}\label{eq:natcomp}
q^*f_* \xRightarrow{\;q^*f_*\eta^{q'}\;} q^*f_*q'_*q'^* \xRightarrow{\;\cong\;}  q^*q_*\widetilde{f}_*q'^*
\xRightarrow{\;\varepsilon^q\widetilde{f}_*q'^*\;} \widetilde{f}_*q'^* .
\end{equation}
where $\eta^q$ and $\varepsilon^q$ are the unit and the counit of the adjunction $q^*\dashv q_*$, respectively, and $\eta^{q'}$ and $\varepsilon^{q'}$ are the unit and the counit of the adjunction $q'^*\dashv q'_*$.
These are given by the following formulas.
For $N\in \mathcal{M}_{B}$ and $M\in \mathcal{M}^{H}_{A}$, their $N$- and $M$-components read as
\begin{align*}
\eta^{q}_{N} &:N\to (N\otimes_{B}A)^{\textrm{co}H} ,
&& n\mapsto n\otimes_B 1 , \\
\varepsilon^{q}_{M} &: M^{\textrm{co}H}\otimes_B A\to M , &&
m\otimes_B a \mapsto ma .
\end{align*}
One has similar formulas for the adjunction $q'^*\dashv q'_*$.

Let $N'$ be an object of $\mathcal{M}_{B'}$.
We now compute the $N'$-component of the above natural transformation \eqref{eq:natcomp}.
One has
\begin{gather}
q^*f_*(N')=N'\otimes_B A
\xrightarrow{\;q^*f_*\eta^{q'}_{N'}}
q^*f_*q'_*q'^*(N')=
(N'\otimes_{B'}A')^{\textrm{co}H'}\otimes_BA \notag \\
\xrightarrow{\;\cong\;} q^*q_*\widetilde{f}_*q'^*(N')=\big((N'\otimes_{B'}A')\mathbin{\Box}^{H'}\!H\big)^{\textrm{co}H}\otimes_BA \notag \\
\xrightarrow{\;\varepsilon_{\widetilde{f}_*q'^*(N')}^{q} \;}
(N'\otimes_{B'}A')\mathbin{\Box}^{H'}\!H=\widetilde{f}_*q'^* (N') . \label{eq:multilabel}
\end{gather}
\[
n'\otimes_B a \mapsto (n'\otimes_{B'} 1_{A'})\otimes_B a\mapsto 
 \big((n'\otimes_{B'}1_{A'} )\otimes 1_H\big)\otimes_B a
\mapsto\big(n'\otimes_{B'} \alpha(a_{(0)})\big)\otimes a_{(1)} .
\]
We now use two isomorphisms. One is
\begin{equation}\label{eq:firstiso}
N'\otimes_{B'}(B'\otimes_BA)\to N'\otimes_B A ,\qquad n'\otimes_{B'}  (b'\otimes_B a)  \mapsto n'b'\otimes_B a .
\end{equation}
with the obvious inverse
\[
N'\otimes_B A \to N'\otimes_{B'} (B'\otimes_BA) ,\qquad n'\otimes_Ba \mapsto n'\otimes_{B'} (1\otimes_Ba) .
\]
For the second one, let's start from the defining sequence of a balanced tensor product
\begin{center}
\begin{tikzpicture}[>=To]

\node (a) at (0,0) {$N'\otimes B'\otimes A'$};
\node (b) at (3,0) {$N'\otimes A'$};
\node (c) at (6,0) {$N'\otimes_{B'} A'$};

\draw[transform canvas={yshift=2.7pt},->] (a) -- (b);
\draw[transform canvas={yshift=-2.7pt},->] (a) -- (b);
\draw[->] (b) -- (c);

\end{tikzpicture}
\end{center}
Since $\chi$ is left coflat, $(\,\text{-}\,)\mathbin{\Box}^{H'}\!H$ is right exact and we get an isomorphism of coequalizers
\begin{equation}\label{eq:balcot}
\begin{tikzpicture}[>=To,baseline=(current bounding box.center)]

\node (a) at (0,2) {$(N'\otimes B'\otimes A')\mathbin{\Box}^{H'}\!H$};
\node (b) at (5,2) {$(N'\otimes A')\mathbin{\Box}^{H'}\!H$};
\node (c) at (10,2) {$(N'\otimes_{B'} A')\mathbin{\Box}^{H'}\!H$};
\node (d) at (0,0) {$N'\otimes B'\otimes (A'\mathbin{\Box}^{H'}\!H)$};
\node (e) at (5,0) {$N'\otimes (A'\mathbin{\Box}^{H'}\!H)$};
\node (f) at (10,0) {$N'\otimes_{B'} (A' \mathbin{\Box}^{H'}\!H)$};

\draw[transform canvas={yshift=2.7pt},->] (a) -- (b);
\draw[transform canvas={yshift=-2.7pt},->] (a) -- (b);
\draw[transform canvas={yshift=2.7pt},->] (d) -- (e);
\draw[transform canvas={yshift=-2.7pt},->] (d) -- (e);

\draw[->] (b) -- (c);
\draw[->] (e) -- (f);

\draw[->] (a) --node[left] {$\cong$} (d);
\draw[->] (b) --node[left] {$\cong$} (e);
\draw[->,dashed] (c) -- (f);

\end{tikzpicture}
\end{equation}
which makes sense since the right $H'$-coaction on $A'$ is left $B'$-linear.

The induced isomorphism (the dashed arrow) is explicitly given by the formula
\begin{equation}\label{eq:secondiso}
\sum\nolimits_i(n'_i\otimes_{B'}a'_i)\otimes h_i \mapsto 
\sum\nolimits_i n'_i\otimes_{B'} (a'_i\otimes h_i)
\end{equation}

The composition of \eqref{eq:multilabel} with the two isomorphisms 
\eqref{eq:firstiso} and \eqref{eq:secondiso} gives the map
\begin{align}
N'\otimes_{B'} (B'\otimes_BA) &\to N'\otimes_{B'} (A'\mathbin{\Box}^{H'}\!H) , \notag \\
n'\otimes_{B'} (b'\otimes_B a)  &\mapsto n'\otimes_{B'} \big(b'\alpha(a_{(0)})\otimes a_{(1)} \big) . \label{eq:previousonelabel}
\end{align}
By naturality in $N'$, this is equivalent as a datum to the generalized canonical map \eqref{eq:cangen}
$\kappa: B'\otimes_BA \to A'\mathbin{\Box}^{H'}\!H.$ 
Indeed, \eqref{eq:previousonelabel} is induced by $\kappa$, and taking $N':=B'$ in \eqref{eq:previousonelabel} we recover $\kappa$.
\end{proof}

\begin{rmk}
Note that if $H'$ is cosemisimple then $\chi$ is automatically left coflat. Recall also that Peter--Weyl Hopf-algebras of compact quantum groups are always cosemisimple.
\end{rmk}

\begin{thm}\label{thm:314}
Consider a morphism \eqref{eq:diagaction} of Hopf-algebra-invariant algebra maps, assume that $\chi:H\to H'$ is left coflat, that $H$ has bijective antipode and that the generalized canonical map $\kappa$ in \eqref{eq:cangen} is bijective.
Then, the following hold.
\begin{enumerate}\itemsep=5pt
\item The other mate 
\begin{equation}\label{eq:matediagrambis}
\begin{tikzpicture}[baseline=(current bounding box.center)]

\node (Ap) at (0,2) {$\mathcal{M}_{A'}^{H'}$};
\node (A) at (2.5,2) {$\mathcal{M}_{A}^{H}$};
\node (Bp) at (0,0) {$\mathcal{M}_{B'}$};
\node (B) at (2.5,0) {$\mathcal{M}_{B}$};

\path[To-,font=\footnotesize]
		(Bp) edge node[below] {$f^*$} (B)
		(B) edge node[right] {$q_*$} (A)
		(Bp) edge node[left] {$q'_*$} (Ap)
		(Ap) edge node[above] {$\widetilde{f}^*$} (A);

\draw[-implies,double equal sign distance, shorten >=18pt, shorten <=18pt] (B) -- 
(Ap);

\end{tikzpicture}\; ,
\end{equation}
of the diagram \eqref{diag:above}, weakly commuting up to a natural transformation $f^*q_*\Rightarrow q'_*\widetilde{f}^*$, 
induces a morphism $\widetilde{\kappa}:A\otimes_{B}B'\to A'\mathbin{\Box}^{H'}\!H$ of right $B$-bimodules and right $H$-comodules, and defines a morphism $\varphi:=\kappa^{-1}\circ\widetilde{\kappa}$ making the diagram
\begin{equation}\label{gendurdevic}
\begin{tikzpicture}[baseline=(current bounding box.center)]

\node (L) at (0,1.5) {$A\otimes_{B}B'$};
\node (R) at (4,1.5) {$B'\otimes_{B}A$};
\node (B) at (2,0) {$A'\mathbin{\Box}^{H'}\!H $};

\path[-To,font=\footnotesize]
		(L) edge node[above] {$\varphi$} (R)
		(L) edge node[below] {$\widetilde{\kappa}\hspace{1.5em}$} (B)
		(R) edge node[below] {$\hspace{1.5em}\kappa$} (B);
\end{tikzpicture}
\end{equation}
commute.

\item
The morphism $\varphi$ is a distributive law between the monads $(-)\otimes_B A$ and $(-)\otimes_B B'$ on $\mathcal{M}_B$.

\item
The resulting monad structure on the composition of monads
\[
(-)\otimes_B (B' \otimes_B A)=((-)\otimes_B A)\circ ((-)\otimes_B B')
\]
gives a monoid structure on the object $B'\otimes_{B}A$ in the monoidal category of $B$-bimodules right $H$-comodules. In this way, $B'\otimes_B A$ becomes a right $H$-comodule algebra equipped with an $H$-invariant algebra map $B\to B'\otimes_B A$.

\item With respect to the latter $H$-comodule algebra structure of $B'\otimes_B A$, the generalized canonical map $\kappa$ in \eqref{eq:cangen} is an isomorphism of right $H$-comodule algebras making the following diagram of right $H$-comodule algebras commute
\begin{equation}
\begin{tikzpicture}[xscale=2.5,yscale=1.5,baseline=(current bounding box.center)]

\node (E) at (0,3) {$A'$};
\node (F) at (3,3) {$A$};
\node (fF) at (2,1) {$B'\otimes_{B}A$};
\node (X) at (0,0) {$B'$};
\node (Y) at (3,0) {$B$};
\node (V) at (1,2) {$A'\mathbin{\Box}^{H'}\!H$};

\path[To-,font=\footnotesize]
		(E) edge (V)
		(V) edge (X)
		(E) edge node[above] {$\alpha$} (F)
		(V) edge node[above right,inner sep=1pt] {$\kappa$} (fF)
		(fF) edge (F)
		(E) edge  (X)
		(F) edge  (Y)
		(X) edge node[below] {$\beta$} (Y)
		(fF) edge (X)
		(fF) edge (Y)
		(V) edge (F);
\end{tikzpicture}
\end{equation}
where $A\to B'\otimes_BA$ is the map $a\mapsto 1_{B'}\otimes_B a$,
$B'\to B'\otimes_BA$ is the map $b'\mapsto b'\otimes_B 1_A$,
$B'\to A'\mathbin{\Box}^{H'}\!H$ is the map $b'\mapsto b'1_{A'}\otimes 1_H$.
\end{enumerate}
\end{thm}

\begin{proof}
The natural transformation for \eqref{eq:matediagrambis} decomposes as
\begin{equation}\label{eq:natcompother}
f^*q_* \xRightarrow{\;f^*q_*\eta^{\widetilde{f}}\;} f^*q_*\widetilde{f}_*\widetilde{f}^* \xRightarrow{\;\cong\;}  f^*f_*q'_*\widetilde{f}^*
\xRightarrow{\;\varepsilon^fq'_*\widetilde{f}^*\;} q'_*\widetilde{f}^* ,
\end{equation}
where the isomorphism in the middle is induced by the inverse of \eqref{eq:explaincanA}.

Let $M$ be an object of $\mathcal{M}^{H}_A$.
We now compute the $M$-component of the above natural transformation \eqref{eq:natcompother}.
One has
\begin{gather}
f^*q_*(M)=M^{\textrm{co}H}\otimes_B B'
\xrightarrow{\;f^*q_*\eta^{\widetilde{f}}\;}
f^*q_*\widetilde{f}_*\widetilde{f}^*(M)=
\big((M\otimes_A A')\mathbin{\Box}^{H'}\!H\big)^{\textrm{co}H}\otimes_B B'
\notag \\
\xrightarrow{\;\cong\;}
f^*f_*q'_*\widetilde{f}^*(M)=
( M\otimes_A A' )^{\text{co}H'} \otimes_B  B'
 \notag \\
\xrightarrow{\;\varepsilon^fq'_*\widetilde{f}^*\;}
(M\otimes_A A')^{\text{co}H'}
=q'_*\widetilde{f}^*(M) . \label{eq:multilabelother}
\end{gather}
\begin{gather*}
m\otimes_B b' \mapsto \big((m_{(0)}\otimes_A 1)\otimes m_{(1)}\big)\otimes_B b'
\mapsto  \big(m_{(0)}\varepsilon(m_{(1)})\otimes_A 1\big)\otimes_B b' \\
= (m\otimes_A 1)\otimes_B b' \mapsto m\otimes_Ab' .
\end{gather*}
Now, let us take $M:=H\otimes A$ with obvious right $A$-action and diagonal $H$-coaction.
This is an object of $\mathcal{M}^{H}_A$. Indeed,
\begin{align*}
& \hspace*{14pt}  (h\otimes aa_1)_{(0)}\otimes (h\otimes aa_1)_{(1)} \\
&= h_{(1)}\otimes (aa_1)_{(0)}\otimes h_{(2)}(aa_1)_{(1)} \\
&= h_{(1)}\otimes a_{(0)}a_{1(0)}\otimes h_{(2)}a_{(1)}a_{1(1)} \\
&= (h_{(1)}\otimes a_{(0)})a_{1(0)}\otimes h_{(2)}a_{(1)}a_{1(1)} \\
&= (h\otimes a)_{(0)}a_{1(0)}\otimes (ha)_{(1)}a_{1(1)}
\end{align*}
The right $B$-module map
\begin{equation}\label{eq:composeleft}
A\to (H\otimes A)^{\textrm{co}H} , \qquad a\mapsto S^{-1}(a_{(1)})\otimes a_{(0)} ,
\end{equation}
is well-defined since
\begin{align*}
& \hspace*{14pt} \big(S^{-1}(a_{(1)})\otimes a_{(0)}\big)_{(0)}\otimes\big(S^{-1}(a_{(1)})\otimes a_{(0)}\big)_{(1)} \\
&=S^{-1}(a_{(1)})_{(1)}\otimes a_{(0)(0)}\otimes  S^{-1}(a_{(1)})_{(2)}a_{(0)(1)} \\
&=S^{-1}(a_{(1)(2)})\otimes a_{(0)(0)}\otimes  S^{-1}(a_{(1)(1)})a_{(0)(1)} \\
&=S^{-1}(a_{(3)})\otimes a_{(0)}\otimes  S^{-1}(a_{(2)})a_{(1)} \\
&=S^{-1}(a_{(2)})\otimes a_{(0)}\otimes  \varepsilon(a_{(1)}) \\
&=S^{-1}(\varepsilon(a_{(1)})a_{(2)})\otimes a_{(0)}\otimes  1_H \\
&=S^{-1}(a_{(1)})\otimes a_{(0)}\otimes  1_H ,
\end{align*}
and it is an isomorphism with inverse
\[
(H\otimes A)^{\textrm{co}H}\to A , \qquad \sum\nolimits_ih_i\otimes a_i\mapsto \sum\nolimits_i\varepsilon(h_i)a_i .
\]
The morphism in $\mathcal{M}^{H'}_{A'}$,
\begin{equation}\label{eq:composerightone}
(H\otimes A)\otimes_A A'\to H\otimes A' ,\qquad
(h\otimes a)\otimes_A a'\mapsto h\otimes \alpha(a)a',
\end{equation}
is invertible with inverse
\[
H\otimes A' \to (H\otimes A)\otimes_A A' ,\qquad
h\otimes a'\mapsto (h\otimes 1)\otimes_Aa'  .
\]
The right $B'$-module map
\begin{equation}\label{eq:composerighttwo}
(H\otimes A')^{\mathrm{co}H'}\to A'\mathbin{\Box}^{H'}\!H,\qquad
\sum\nolimits_ih_i\otimes a'_i\mapsto \sum\nolimits_ia'_i\otimes S(h_i) ,
\end{equation}
is well-defined. Indeed, let $\sum_ih_i\otimes a'_i$ be invariant under the $H'$-coaction, that means
\[
\sum\nolimits_i
h_{i(1)}\otimes a'_{i(0)} \otimes \chi( h_{i(2)} )a'_{i(1)} =
\sum\nolimits_i
h_i\otimes a'_i\otimes 1_{H'} .
\]
Apply the coproduct to the first leg:
\[
\sum\nolimits_i
h_{i(1)}\otimes h_{i(2)}\otimes a'_{i(0)} \otimes \chi( h_{i(3)} )a'_{i(1)} =
\sum\nolimits_i
h_{i(1)}\otimes h_{i(2)}\otimes a'_i\otimes 1_{H'} .
\]
Apply $\chi\circ S$ to the second leg, flip second and third leg and multiply last two legs:
\[
\sum\nolimits_i
h_{i(1)}\otimes a'_{i(0)} \otimes \chi \big(S(h_{i(2)})\big)\chi( h_{i(3)} )a'_{i(1)} =
\sum\nolimits_i
h_{i(1)}\otimes a'_i\otimes \chi \big(S(h_{i(2)})\big) .
\]
Using $\chi \big(S(h_{i(2)})\big)\chi( h_{i(3)} )=\varepsilon(h_{i(2)})$ and permuting the legs again
\[
\sum\nolimits_i
a'_{i(0)} \otimes a'_{i(1)}\otimes h_i =
\sum\nolimits_i
a'_i\otimes \chi \big(S(h_{i(2)})\big) \otimes h_{i(1)} .
\]
Now we apply $S$ to the third leg and find
\[
\sum\nolimits_i
a'_{i(0)} \otimes a'_{i(1)}\otimes S(h_i) =
\sum\nolimits_i
a'_i\otimes \chi \big(S(h_i)_{(1)}\big) \otimes S(h_i)_{(2)} ,
\]
which proves that the image of \eqref{eq:composerighttwo} belongs indeed to the cotensor product $A'\mathbin{\Box}^{H'}\!H$.

The inverse of \eqref{eq:composerighttwo} is given by
\[
A'\mathbin{\Box}^{H'}\!H\to (H\otimes A')^{\mathrm{co}H'} ,\qquad
\sum\nolimits_ia'_i\otimes h_i\mapsto \sum\nolimits_iS^{-1}(h_i)\otimes a'_i .
\]
The map \eqref{eq:multilabelother}, becomes
\[
(H\otimes A)^{\textrm{co}H}\otimes_B B' \to ((H\otimes A)\otimes_A A')^{\text{co}H'} ,\qquad
(h\otimes a)\otimes_B b' \mapsto  (h\otimes a)\otimes_Ab'  ,
\]
and its composition with the isomorphisms \eqref{eq:composeleft}, \eqref{eq:composerightone} and 
\eqref{eq:composerighttwo} is
\begin{align*}
A\otimes_B B' &\to
(H\otimes A)^{\textrm{co}H}\otimes_B B' \to ((H\otimes A)\otimes_A A')^{\text{co}H'}
\to (H\otimes A')^{\text{co}H'} \to A'\mathbin{\Box}^{H'}\!H
\\
a\otimes_B b' &\mapsto  \big( S^{-1}(a_{(1)})\otimes a_{(0)} \big)\otimes_B b' \mapsto
\big( S^{-1}(a_{(1)})\otimes a_{(0)} \big)\otimes_A b' \\ &\mapsto
S^{-1}(a_{(1)})\otimes \alpha(a_{(0)})b' \mapsto
\alpha(a_{(0)})b' \otimes a_{(1)} 
\end{align*}
resulting in a map
\begin{equation}\label{eq:kappatilde}
\widetilde{\kappa}: A\otimes_B B'\to A'\mathbin{\Box}^{H'}\!H,\quad\quad a\otimes_B b' \mapsto \alpha(a_{(0)})b' \otimes a_{(1)}.
\end{equation}
Next, we define $\varphi:=\kappa^{-1}\circ \widetilde{\kappa}: A\otimes_B B'\to B'\otimes_B A$ and prove that the multiplication in  $B'\otimes_B A$ defined as a composition
\begin{center}
\begin{tikzpicture}
\node (a) at (-4.6,0) {$(B'\otimes_B A)\otimes_{B}(B'\otimes_B A)=$};
\node (b) at (0,0) {$B'\otimes_B (A\otimes_{B}B')\otimes_B A$};
\node (c) at (0,-1.5) {$(B'\otimes_{B} B')\otimes_{B}(A\otimes_{B} A)$};
\node (d) at (5.5,-1.5) {$B'\otimes_B A$,};

\draw[-To] (b) edge node[right,font=\scriptsize] {$B'\otimes_B\varphi\otimes_{B}A$} (c);
\draw[-To] (c) edge node[above,font=\scriptsize] {$\mu_{B'}\otimes_{B}\mu_{A}$} (d);

\end{tikzpicture}
\end{center}
with the unit $1_{B'}\otimes_{B}1_{A}$, is transformed by $\kappa$ into the standard multiplication in $A'\mathbin{\Box}^{H'}\!H$
\begin{align*}
(A'\mathbin{\Box}^{H'}\!H)\otimes_{B}(A'\mathbin{\Box}^{H'}\!H) &\longrightarrow A'\mathbin{\Box}^{H'}\!H, \\[2pt]
(\sum_{i}a_{i}'\otimes h_{i})\otimes_{B}(\sum_{j}a_{j}'\otimes h_{j})& \longmapsto\sum_{i, j}a_{i}'a_{j}'\otimes h_{i}h_{j}
\end{align*}
with the unit $1_{A'}\otimes 1_{H}$.

Since the latter multiplication is associative and unital, invertibility of $\kappa$ will prove the same about the above multiplication in $B'\otimes_B A$. To this end, we observe first that $\kappa$ is a morphism in the category  ${}_{B'}\mathcal{M}_{A}^{H}$ of left $B'$-modules right relative $(A, H)$-Hopf modules. Here, the domain  and the codomain of $\kappa$ admit the following  structure of an object in ${}_{B'}\mathcal{M}_{A}^{H}$:

for $B'\otimes_{B}A$ 
\begin{align*}\widetilde{b}'(b'\otimes_{B}a)\widetilde{a}&:=\widetilde{b}'b'\otimes_{B}a\widetilde{a},\\
\quad (b'\otimes_{B}a)_{(0)}\otimes (b'\otimes_{B}a)_{(1)}&:=b'\otimes_{B}a_{(0)}\otimes a_{(1)},
\end{align*}

for $A'\mathbin{\Box}^{H'}\!H$
\begin{align*}\widetilde{b}'(\sum_{i}a'_{i}\otimes h_{i})\widetilde{a}&:=\sum_{i}\widetilde{b}'a'_{i}\widetilde{a}_{(0)}\otimes h_{i}\widetilde{a}_{(1)},\\
 (\sum_{i}a'_{i}\otimes h_{i})_{(0)}\otimes (\sum_{i}a'_{i}\otimes h_{i})_{(1)}&:= (\sum_{i}a'_{i}\otimes h_{i (1)})\otimes h_{i (2)}.
\end{align*}
Therefore we have
\begin{align*}
\kappa\big((b'\otimes_{B}a)(\widetilde{b}'\otimes_{B}\widetilde{a})\big) &=b'\kappa\big((1_{B'}\otimes_{B}a)(\widetilde{b}'\otimes_{B}1_{A})\big)\widetilde{a}\\
&=b'\kappa\big(\varphi(a\otimes_{B}\widetilde{b}')\big)\widetilde{a} = b'\widetilde{\kappa}\big(\varphi(a\otimes_{B}\widetilde{b}')\big)\widetilde{a}\\
&= b'\big(\alpha(a_{(0)})\widetilde{b}'\otimes a_{(1)}\big)\widetilde{a}= b'\alpha(a_{(0)})\widetilde{b}'\widetilde{a}_{(0)}\otimes a_{(1)}\widetilde{a}_{(1)}\\
&= \kappa\big(b'\otimes_{B}a\big)\kappa\big(\widetilde{b}'\otimes_{B}\widetilde{a}\big)
\end{align*}
and 
\begin{align*}
\kappa\big(1_{B'}\otimes_{B}1_{A}\big)=1_{A'}\otimes 1_{H}.
\end{align*}
It is not difficult to check that all above makes $\kappa$ a right $H$-comodule algebra map and the diagram \eqref{eq:diagram326} commutes.
\end{proof}

\begin{example}
In the commutative case, $\widetilde{\kappa}$ differs from $\kappa$ by the flip. Hence, when $\kappa$ is invertible, $\varphi$ is simply the flip.
\end{example}

\begin{example}
When $H=H'=\Bbbk$, the Beck--Chevalley condition means simply that our generalized canonical map $B'\otimes_B A\to A'$, $b'\otimes_B a\mapsto b'\alpha(a)$ is an isomorphism of $(B',A)$-bimodules. In the commutative case it would be an isomorphism of algebras, meaning that the commutative square of commutative algebras
\begin{equation}\label{eq:cartesiansquaren}
\begin{tikzpicture}[baseline=(current bounding box.center)]

\node (Ap) at (0,1.5) {$A'$};
\node (A) at (2,1.5) {$A$};
\node (Bp) at (0,0) {$B'$};
\node (B) at (2,0) {$B$};

\path[-To,font=\footnotesize]
		(B) edge (Bp)
		(B) edge (A)
		(A) edge (Ap)
		(Bp) edge (Ap);

\end{tikzpicture}
\end{equation}
is a pushout diagram in a symmetric monoidal category of algebras over $B$, with symmetry given by the flip.
Since in the opposite category \eqref{eq:cartesiansquaren} corresponds to a Cartesian square of spaces, this justifies  understanding the above Beck--Chevalley condition as a kind of property of being a Cartesian morphism of families of quantum group actions.
\end{example}

\begin{rmk}\label{rem:aboveremark}
Summarizing the content of Theorems \ref{thm:312} and \ref{thm:314}, we stress that
\begin{enumerate}[label=(\roman*)]
\item\label{rem:aboveremarkA}
The notion of a Cartesian morphism between Hopf-algebra-invariant algebra maps 
translates into the notion of a weakly Cartesian square of Grothendieck categories in the 2-category $\mathfrak{G}$.

\item
Using the distributive law $\varphi$, we can regard the diagram \eqref{eq:diagram326} of Hopf-comodule algebras as a noncommutative counterpart of the diagram \eqref{eq:diagdecompo} of families of group actions.
\end{enumerate}
\end{rmk}

Given a morphism \eqref{eq:diagaction}, we can define three more adjunctions
\[
\mathcal{h}:\mathcal{M}^{H'}\to\mathcal{M}^{H} ,\qquad
\mathcal{p}:\mathcal{M}_{A}^{H}\to\mathcal{M}^{H} , \qquad
\mathcal{p}':\mathcal{M}_{A'}^{H'}\to\mathcal{M}^{H'} .
\]
Here the category $\mathcal{M}^H$ of right $H$-comodules is regarded as the category of right relative Hopf-modules $\mathcal{M}^H_{\Bbbk}$, where $\Bbbk$ is the trivial $H$-comodule algebra, and similarly for $H'$.
The left adjoint $h^*=\mathrm{Res}_{H'}^H$ is the functor restricting the comodule structure induced by the Hopf-algebra map $\chi:H\to H'$.
The right adjoint $h_*=\mathrm{Ind}_{H'}^H$ is the functor extending the comodule structure induced by the same map $\chi$, i.e.~$\mathrm{Ind}_{H'}^H(V')=V'\mathbin{\Box}^{H'}\!H$. The left adjoint $p^*(V)=V\otimes A$ is a relative Hopf module with the obvious right $A$-module structure and with the diagonal right $H$-coaction
\[
(v\otimes a)_{(0)}\otimes (v\otimes a)_{(1)}:= (v_{(0)}\otimes a_{(0)})\otimes v_{(1)}a_{(1)}.
\]
The right adjoint $p_*(M)=M$ is forgetting the right $A$-module structure. The adjunction $\mathcal{p}'=(p'^*\dashv p'_*)$ is defined similarly.

\begin{prop}
There exists a natural transformation making the diagram
\begin{equation}\label{eq:uppersquare}
\begin{tikzpicture}[scale=1.2,baseline={([yshift=-3pt]current bounding box.center)}]

\node (Hp) at (0,0) {$\mathcal{M}^{H'}$};
\node (H) at (2,0) {$\mathcal{M}^H$};
\node (Ap) at (0,1.8) {$\mathcal{M}_{A'}^{H'}$};
\node (A) at (2,1.8) {$\mathcal{M}_{A}^{H}$};

\path[To-,font=\footnotesize]
		(H) edge node[below] {$\mathcal{h}$} (Hp)
		(A) edge node[above] {$\widetilde{\mathcal{f}}$} (Ap)
		(H) edge node[right] {$\mathcal{p}$}  (A)
		(Hp) edge  node[left] {$\mathcal{p}'$} (Ap);

\draw[-implies,double equal sign distance, shorten >=16pt, shorten <=16pt] (Hp) -- (A);
\draw[-implies,double equal sign distance, shorten >=16pt, shorten <=16pt] (Bp) -- (A);

\end{tikzpicture}
\end{equation}
weakly commute.
\end{prop}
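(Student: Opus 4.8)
The plan is to check that the two composite right-adjoint functors $p_*\widetilde f_*$ and $h_*p'_*\colon\mathcal M^{H'}_{A'}\to\mathcal M^{H}$ are in fact literally equal, so that the identity natural transformation provides the required invertible $2$-cell $\mathcal h\mathcal p'\Rightarrow\mathcal p\widetilde{\mathcal f}$. Recall that in $\mathfrak G$ a $2$-cell between parallel $1$-cells is by definition a natural transformation of their right-adjoint components, and that weak commutativity of a square only asks this $2$-cell to be invertible; hence it suffices to produce a natural isomorphism $h_*p'_*\cong p_*\widetilde f_*$.

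First I would unwind $p_*\widetilde f_*$. For $M'\in\mathcal M^{H'}_{A'}$, the functor $\widetilde f_*$ returns $M'\Box^{H'}H\in\mathcal M^{H}_{A}$, whose underlying right $H$-comodule (coaction on the second leg of the cotensor product) is assembled only from the right $H'$-comodule structure of $M'$ and the Hopf-algebra map $\chi\colon H\to H'$ — the right $A$-action \eqref{eq:Aaction} plays no role in this comodule structure. Applying $p_*$ discards precisely that $A$-action, so $p_*\widetilde f_*M'=M'\Box^{H'}H$ as a right $H$-comodule. On the other side, $p'_*$ merely forgets the right $A'$-action and views $M'$ as an object of $\mathcal M^{H'}$, after which $h_*=\mathrm{Ind}_{H'}^{H}$ produces $M'\Box^{H'}H\in\mathcal M^{H}$ with the $H$-coaction on the second leg. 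Since the cotensor product $M'\Box^{H'}H\subseteq M'\otimes H$ and its $H$-coaction depend only on $M'$ as a vector space, on its $H'$-coaction, and on $\chi$, this is the very same object; and the two functors agree on morphisms as well, since both act by $g\mapsto g\Box^{H'}\mathrm{id}_H$ (resp.\ by plain restriction). Hence $p_*\widetilde f_*=h_*p'_*$ on the nose, the identity natural transformation is an invertible $2$-cell $\mathcal h\mathcal p'\Rightarrow\mathcal p\widetilde{\mathcal f}$, and \eqref{eq:uppersquare} weakly commutes.

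As a sanity check one may also verify that the corresponding left adjoints agree: using the canonical isomorphism $(V\otimes A)\otimes_A A'\cong V\otimes A'$ one sees that both $\widetilde f^*p^*$ and $p'^*h^*$ send $V\in\mathcal M^{H}$ to $V\otimes A'$ with the $A'$-action on the second factor and with the $\chi$-twisted diagonal $H'$-coaction $v\otimes b'\mapsto v_{(0)}\otimes b'_{(0)}\otimes\chi(v_{(1)})b'_{(1)}$, which by uniqueness of adjoints is consistent with the above. There is no genuine obstacle in this proof; the only delicate point is the bookkeeping of which structures each of $\widetilde f_*$, $p_*$, $h_*$, $p'_*$ remembers and which it forgets — in particular, confirming that the $H$-comodule structure of $M'\Box^{H'}H$ delivered by $\widetilde f_*$ does not covertly depend on the module data, so that forgetting modules on one side and inducing the comodule on the other genuinely land on the identical object.
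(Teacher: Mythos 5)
Your argument is exactly the one the paper gives: both composites $h_*p'_*$ and $p_*\widetilde f_*$ send $M'$ to $M'\Box^{H'}H$, one by forgetting the $A'$-module structure first and the other by forgetting the $A$-module structure afterwards, so the identity is the required invertible $2$-cell. The proposal is correct and matches the paper's proof, merely spelling out the bookkeeping in more detail.
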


\begin{proof}
One has natural identifications
$
h_*p'_*(M')=
M'\mathbin{\Box}^{H'}\!H=
p_*\widetilde{f}_*(M')
$
given by forgetting the $A'$-module structure of $M'$ and the $A$-module structure
of $\widetilde{f}_*(M')$.
\end{proof}

Since $\mathcal{M}^H$ can be regarded as a weak categorical version of the classifying space of a group, the weakly commutativity of the upper square in \eqref{eq:uppersquare} 
can be regarded as a an equivariance condition for $\widetilde{\mathcal{f}}$ as a generalized map between equivariant noncommutative spaces (in the sense of Grothendieck categories). 

By Remark \ref{rem:aboveremark}, we can extend the fundamental notions of equivariant topology in the Examples \ref{ex:3.3a}-\ref{ex:3.3e} to noncommutative spaces understood as Grothendieck categories, using the notion of a weakly Cartesian square
\begin{equation}\label{eq:weakCartdouble}
\begin{tikzpicture}[scale=1.2,baseline={([yshift=-5pt]current bounding box.center)}]

\node (Ap) at (0,1.8) {$\mathcal{M}_{A'}^{H'}$};
\node (A) at (2,1.8) {$\mathcal{M}^{H}_{A}$};
\node (Bp) at (0,0) {$\mathcal{M}_{B'}$};
\node (B) at (2,0) {$\mathcal{M}_{B}$};

\node[right=12pt] at (B) {\rule{0pt}{8pt}.};

\path[To-,font=\footnotesize]
		(B) edge node[below] {$\mathcal{f}$} (Bp)
		(A) edge node[above] {$\widetilde{\mathcal{f}}$} (Ap)
		(Bp) edge node[left] {$\mathcal{q}'$} (Ap)
		(B) edge node[right] {$\mathcal{q}$} (A);

\draw[-implies,double equal sign distance, shorten >=16pt, shorten <=16pt] (Bp) -- (A);

\end{tikzpicture}
\end{equation}

\begin{example}[\textbf{\textit{Noncommutative orbit spaces}}]\label{ex:catA}
The diagram \eqref{eq:weakCartdouble} for the morphism in Example \ref{ex:algA} reads as
\begin{center}
\begin{tikzpicture}[scale=1.2]

\node (Ap) at (0,1.8) {$\mathcal{M}_{A}^{H}$};
\node (A) at (2,1.8) {$\mathcal{M}_{\Bbbk}^{\Bbbk}$};
\node (Bp) at (0,0) {$\mathcal{M}_{B}$};
\node (B) at (2,0) {$\mathcal{M}_{\Bbbk}$};

\path[To-,font=\footnotesize]
		(B) edge (Bp)
		(A) edge (Ap)
		(Bp) edge node[left] {$\mathcal{q}$} (Ap)
		(B) edge (A);

\draw[-implies,double equal sign distance, shorten >=16pt, shorten <=16pt] (Hp) -- (A);
\draw[-implies,double equal sign distance, shorten >=16pt, shorten <=16pt] (Bp) -- (A);

\end{tikzpicture}
\end{center}
where the arrows that are not labelled are obvious.
The condition of being weakly Cartesian should be regarded as a condition for $\mathcal{q}$ being a \emph{noncommutative quotient map} from the equivariant noncommutative space described by $\mathcal{M}_A^H$ to the noncommutative orbit space described by $\mathcal{M}_B$.
\end{example}

\begin{example}[\textbf{\textit{Noncommutative slices}}]\label{ex:catB}
The diagram \eqref{eq:weakCartdouble} for the morphism in Example \ref{ex:algB} reads as
\begin{center}
\begin{tikzpicture}[scale=1.2]

\node (Ap) at (0,1.8) {$\mathcal{M}_{A}^{H}$};
\node (A) at (2,1.8) {$\mathcal{M}^{H}_{H}$};
\node (Bp) at (0,0) {$\mathcal{M}_{B}$};
\node (B) at (2,0) {$\mathcal{M}_{\Bbbk}$};

\path[To-,font=\footnotesize]
		(B) edge (Bp)
		(A) edge node[above] {$\mathcal{s}$} (Ap)
		(Bp) edge node[left] {$\mathcal{q}$} (Ap)
		(B) edge (A);

\draw[-implies,double equal sign distance, shorten >=16pt, shorten <=16pt] (Bp) -- (A);

\end{tikzpicture}
\end{center}
where $H$ is regarded as a righ $H$-comodule algebra via the comultiplication map.
The vertical adjunction on the right is the categorical reformulation of the Fundamental Theorem for Hopf-modules \cite[Theorem 4.1.1]{Swe69}.
The bottom horizontal arrow is the equivalence induced by the unit map of the $\Bbbk$-algebra $B$.
The condition of being weakly Cartesian should be regarded as a condition for $\mathcal{s}$ being a \emph{noncommutative slice map}.
\end{example}

\begin{example}[\textbf{\textit{Noncommutative principal bundles}}]\label{ex:catC}
The diagram \eqref{eq:weakCartdouble} for the morphism in Example \ref{ex:algC} reads as
\begin{equation}\label{eq:ncp}
\begin{tikzpicture}[scale=1.2,baseline=(current bounding box.center)]

\node (Ap) at (0,1.8) {$\mathcal{M}_{A\otimes H}^{H}$};
\node (A) at (2,1.8) {$\mathcal{M}^{H}_{A}$};
\node (Bp) at (0,0) {$\mathcal{M}_{A}$};
\node (B) at (2,0) {$\mathcal{M}_{B}$};

\path[To-,font=\footnotesize]
		(B) edge (Bp)
		(A) edge (Ap)
		(Bp) edge (Ap)
		(B) edge node[right] {$\mathcal{q}$} (A);

\draw[-implies,double equal sign distance, shorten >=16pt, shorten <=16pt] (Bp) -- (A);

\end{tikzpicture}
\end{equation}
where the adjunctions are induced by the maps in Example \ref{ex:algC} and
$\mathcal{q}$ is a noncommutative quotient map in the sense of Example \ref{ex:catA}.
We call $\mathcal{q}$ a \emph{noncommutative principal bundle} if the diagram \eqref{eq:ncp} is weakly Cartesian and $\mathcal{q}$ is an equivalence.

Recall that $\mathcal{q}$ is an adjoint equivalence of categories if and only if $(H,A\leftarrow B)$ is a faithfully flat Hopf--Galois extension, cf.~\cite[Theorem 4.10]{SS05} and \cite{Sch90} (note that invertibility of the antipode is not required, see \cite{Sch98}).

The left vertical arrow should be regarded as a trivial noncommutative principal bundle,
and the condition of being weakly Cartesian should be regarded as a noncommutative counterpart of the fact that the pullback of a principal bundle to its total space is always a trivial principal bundle.
\end{example}

\begin{example}[\textbf{\textit{Change of the structure quantum group}}]\label{ex:catD}
The diagram \eqref{eq:weakCartdouble} for the morphism in Example \ref{ex:algD} reads as
\begin{center}
\begin{tikzpicture}[scale=1.2]

\node (Ap) at (0,1.8) {$\mathcal{M}_{A'}^{H'}$};
\node (A) at (2,1.8) {$\mathcal{M}^{H}_{A}$};
\node (Bp) at (0,0) {$\mathcal{M}_{B}$};
\node (B) at (2,0) {$\mathcal{M}_{B}$};

\path[To-,font=\footnotesize]
		(B) edge node[below] {$=$} (Bp)
		(A) edge (Ap)
		(Bp) edge node[left] {$\mathcal{q}'$} (Ap)
		(B) edge node[right] {$\mathcal{q}$} (A);

\draw[-implies,double equal sign distance, shorten >=16pt, shorten <=16pt] (Bp) -- (A);

\end{tikzpicture}
\end{center}
where the adjunctions are induced by the maps in Example \ref{ex:algD} and both
$\mathcal{q}$  and $\mathcal{q}'$ are noncommutative principal bundles in the sense of Example \ref{ex:catC}. The condition of being weakly Cartesian should be regarded as a condition for the top horizontal arrow as being a \emph{change of structure quantum group} of a noncommutative principal bundle.
\end{example}

\begin{example}[\textbf{\textit{General morphisms of noncommutative principal bundles}}]\label{ex:catE}
Combining Example \ref{ex:catC} with Remark \ref{rem:aboveremark}\ref{rem:aboveremarkA}, we argue that a morphism \eqref{eq:diagaction} of noncommutative principal bundles translates into the categorical language as weakly Cartesian square \eqref{eq:weakCartdouble} with $\mathcal{q}$ and $\mathcal{q}'$ being adjoint equivalences.
Then, if $\mathcal{f}$ is also an adjoint equivalence, so is its equivariant lift $\widetilde{\mathcal{f}}$.
This is the proper noncommutative counterpart of the property that every morphism of compact Hausdorff principal bundles over the same base is necessarily an isomorphism (cf.~Example \ref{ex:3.3e}). 
\end{example}

\section*{Acknowledgements}
\noindent
TM was partially supported by NCN grant UMO2021/41/B/ST1/03387 as a part of the project ``Applications of graph algebras and higher-rank graph algebras in noncommutative geometry''.
FD research was partially supported acknowledges support by the University of Naples Federico II grant FRA 2022 GALAQ: \textit{Geometric and Algebraic Aspects of Quantization}.
FD is a member of INdAM-GNSAGA and INFN - Sezione di Napoli.

\frenchspacing


\begin{thebibliography}{10}

\bibitem{ncat}
\textit{Slices of group action},
  \url{https://ncatlab.org/nlab/show/slice+theorem}, Accessed: 2023-11-20.

\bibitem{AT60}
M. Atiyah and J. Todd, \textit{On complex {S}tiefel manifolds}, Math. Proc. Cambridge Phil. Soc. \textbf{56} (1960), 342--353.

\bibitem{BDCH17}
P.F. Baum, K. De Commer, and P.M. Hajac, \textit{Free actions of compact quantum groups on unital C*-algebras}, Doc. Math. \textbf{22} (2017), 825--849.

\bibitem{bh14}
P.F. Baum and P.M. Hajac, \textit{Local proof of algebraic characterization of
  free actions}, SIGMA \textbf{10} (2014), 060.

\bibitem{Bec69}
J. Beck, \textit{Distributive laws}, Seminar on Triples and Categorical Homology
  Theory: ETH 1966/67, Springer, 1969, pp. 119--140.

\bibitem{BW03}
T. Brzezi{\'n}ski and R. Wisbauer, \textit{Corings and comodules}, London Math.
  Soc. Lec. Note Series, vol. 309, Cambridge, 2003.

\bibitem{DHHW14}
L. Dąbrowski, T. Hadfield, P.M. Hajac, and E. Wagner, \textit{Braided join comodule algebras of bi-{G}alois objects}, New York Journal of Mathematics
  \textbf{22} (2016).

\bibitem{DGH01}
L. Dąbrowski, H. Grosse, and P.M. Hajac, \textit{Strong connections and
  Chern--Connes pairing in the Hopf--Galois theory}, Communications in
  Mathematical Physics \textbf{220} (2001), no. 2, 301--331.

\bibitem{DMpart2}
F. D'Andrea and T. Maszczyk, \textit{A multiplicative {K}-theory of compact quantum spaces}, in preparation.

\bibitem{VdB01}
M. Van den Bergh, \textit{Blowing up of non-commutative smooth surfaces}, AMS,
  2001.

\bibitem{Doi83}
Y. Doi, \textit{On the structure of relative {H}opf modules}, Communications in
  Algebra \textbf{11} (1983), 243--255.

\bibitem{DT86}
Y. Doi and M. Takeuchi, \textit{Cleft comodule algebras for a bialgebra},
  Communications in Algebra \textbf{14} (1986), no. 5, 801--817.

\bibitem{Dur96}
M. {\DJ}ur{\dj}evi{\'c}, \textit{Quantum gauge transformations and braided structure on quantum principal bundles}, arXiv:q-alg/9605010 (1996).

\bibitem{Ehr51}
C. Ehresmann, \textit{Les connexions infinit\'{e}simales dans un espace fibr\'{e}
  diff\'{e}rentiable}, Colloque de topologie (espaces fibr\'{e}s), {B}ruxelles,
  1950, Georges Thone, Li\`ege, 1951, pp. 29--55.

\bibitem{Ell00}
D.A. Ellwood, \textit{A new characterisation of principal actions}, Journal of
  Functional Analysis \textbf{173} (2000), no. 1, 49--60.

\bibitem{Gab62}
P. Gabriel, \textit{Des cat\'egories ab\'eliennes}, Bulletin de la Soci\'et\'e
  Math\'ematique de France \textbf{90} (1962), 323--448.

\bibitem{Gro57}
A. Grothendieck, \textit{Sur quelques points d'alg\`ebre homologique}, Tohoku
  Math. J. (2) \textbf{9} (1957), 119--221.

\bibitem{Haj96}
P.M. Hajac, \textit{Strong connections on quantum principal bundles},
  Communications in mathematical physics \textbf{182} (1996), no. 3, 579--617.

\bibitem{KS97}
A. Klimyk and K. Schm{\"u}dgen, \textit{Quantum groups and their
  representations}, Springer, 2012.

\bibitem{Mon93}
S. Montgomery, \textit{Hopf algebras and their actions on rings}, no. 82, AMS,
  1993.

\bibitem{Mon09}
S. Montgomery, \textit{Hopf--{G}alois theory: a survey}, New topological contexts for
  {G}alois theory and algebraic geometry, vol. 16, BIRS, 2008, pp. 367--400.

\bibitem{Orl03}
D.O. Orlov, \textit{Quasi-coherent sheaves in commutative and non-commutative
  geometry}, Izvestiya: Mathematics \textbf{67} (2003), no. 3, 535.

\bibitem{Pav91}
D. Pavlovi{\'c}, \textit{Categorical interpolation: descent and the
  {B}eck-{C}hevalley condition without direct images}, Category Theory, Lecture
  Notes in Mathematics, vol. 1488, Springer, 1991, pp. 306--325.

\bibitem{pod87}
P. Podle\'s, \textit{Quantum spheres}, Lett Math Phys (1987), no. 14, 193--202.

\bibitem{Prz13}
M.R. Przyby{\l}ek, \textit{The other pullback lemma}, arXiv:1311.2974v2 [math.CT].

\bibitem{Ros20}
A.L. Rosenberg, \textit{The spectrum of abelian categories and reconstruction of
  schemes}, Rings, {H}opf algebras, and {B}rauer groups, CRC Press, 2020,
  pp. 257--274.

\bibitem{Sch98}
P. Schauenburg, \textit{Bialgebras over noncommutative rings and a structure
  theorem for Hopf bimodules}, Applied Categorical Structures \textbf{6}
  (1998), no. 2, 193--222.

\bibitem{Sch03}
P. Schauenburg, \textit{Hopf--Galois and bi-Galois extensions}, Galois Theory, Hopf
  Algebras, and Semiabelian Categories, vol. 43, Fields Institute
  Communications, 2004.

\bibitem{SS05}
P. Schauenburg and H.-J. Schneider, \textit{On generalized {H}opf-{G}alois
  extensions}, Journal of Pure and Applied Algebra \textbf{202} (2005),
  no. 1-3, 168--194.

\bibitem{Sch90}
H.J. Schneider, \textit{Principal homogeneous spaces for arbitrary hopf
  algebras}, Israel Journal of Mathematics \textbf{72} (1990), no. 1, 167--195.

\bibitem{Shi82}
H. Shigano, \textit{On observable and strongly observable {H}opf ideals}, Tsukuba
  J. Math. \textbf{6} (1982), no. 1, 127--150.

\bibitem{Ste99}
N. Steenrod, \textit{The topology of fibre bundles}, Princeton Landmarks in
  Mathematics and Physics, vol. 14, Princeton University Press, 1999.

\bibitem{Swe69}
M.E. Sweedler, \textit{Hopf {A}lgebras}, Mathematics lecture note series, W.A.
  Benjamin, 1969.

\bibitem{Wis96}
R. Wisbauer, \textit{Introduction to coalgebras and comodules}, 1996.

\bibitem{w-sl87}
S.L. Woronowicz, \textit{Twisted {$SU(2)$} group. An example of a
  non-commutative differential calculus}, Publ. Res. Inst. Math. Sci.
  \textbf{23} (1987), no. 1, 117--181.

\end{thebibliography}
\end{document}